\newtheorem{theorem}{Theorem}[section]
\newtheorem{lemma}[theorem]{Lemma}
\newtheorem{proposition}[theorem]{Proposition}
\theoremstyle{definition}
\newtheorem{conjecture}[theorem]{Conjecture}
\numberwithin{equation}{section}
\DeclareMathOperator*{\vol}{vol}
\begin{document}

\newtheorem{remark}[theorem]{Remark}



\def \g {{\gamma}}
\def \G {{\Gamma}}
\def \l {{\lambda}}
\def \a {{\alpha}}
\def \b {{\beta}}
\def \f {{\phi}}
\def \r {{\rho}}
\def \R {{\mathbb R}}
\def \H {{\mathbb H}}
\def \N {{\mathbb N}}
\def \C {{\mathbb C}}
\def \Z {{\mathbb Z}}
\def \F {{\Phi}}
\def \Q {{\mathbb Q}}
\def \e {{\epsilon }}
\def \ev {{\vec\epsilon}}
\def \ov {{\vec{0}}}
\def \GinfmodG {{\Gamma_{\!\!\infty}\!\!\setminus\!\Gamma}}
\def \GmodH {{\Gamma\backslash\H}}
\def \sl  {{\hbox{SL}_2( {\mathbb R})} }
\def \psl  {{\hbox{PSL}_2( {\mathbb R})} }
\def \slz  {{\hbox{SL}_2( {\mathbb Z})} }
\def \pslz  {{\hbox{PSL}_2( {\mathbb Z})} }
\def \L  {{\hbox{L}^2}}

\newcommand{\norm}[1]{\left\lVert #1 \right\rVert}
\newcommand{\abs}[1]{\left\lvert #1 \right\rvert}
\newcommand{\modsym}[2]{\left \langle #1,#2 \right\rangle}
\newcommand{\inprod}[2]{\left \langle #1,#2 \right\rangle}
\newcommand{\Nz}[1]{\left\lVert #1 \right\rVert_z}
\newcommand{\tr}[1]{\operatorname{tr}\left( #1 \right)}

\title[Hyperbolic lattice-point counting]{ The hyperbolic lattice point problem\\ in conjugacy classes}
\author{Dimitrios Chatzakos}
\address{Department of Mathematics, University College London, Gower Street, London WC1E 6BT}
\email{d.chatzakos.12@ucl.ac.uk}
\author{Yiannis N. Petridis}
\address{Department of Mathematics, University College London, Gower Street, London WC1E 6BT}
\email{i.petridis@ucl.ac.uk}
\thanks{The first author was  supported by a DTA from  EPSRC during his PhD studies at UCL}
\date{\today}
\keywords{lattice points, hyperbolic space, geodesic arcs}
\subjclass[2010]{Primary 11F72; Secondary 37C35, 37D40}

\begin{abstract}
For $\Gamma$ a cocompact or cofinite Fuchsian group, we study the hyperbolic lattice point problem in conjugacy classes, which is a modification of the classical hyperbolic lattice point problem.  We use large sieve inequalities for the Riemann surfaces $\GmodH$ to obtain average results for the error term, which are conjecturally optimal. We give a new proof of  the error bound $O(X^{2/3})$, due to A. Good. For $\slz$ we interpret our results in terms of indefinite quadratic forms.
\end{abstract}
\maketitle
\section{Introduction}\label{Introduction}

Let $\mathbb{H}$ be the hyperbolic plane, $z$, $w$ be two fixed points in $\mathbb{H}$ and $\rho(z,w)$ denote the hyperbolic distance. Let also $\Gamma \subset \psl$ be a cocompact or cofinite Fuchsian group. The classical hyperbolic lattice point problem asks to estimate the number of points in the orbit $\Gamma z$ that belong in a disk of radius $R$ and center $w$, i.e. to give an asymptotic formula for 
\begin{displaymath}\# \{ \gamma \in \Gamma : \rho( \gamma z, w) \leq R \}.\end{displaymath}
Let $\cosh \rho(z,w) = 2 u(z,w) + 1$, where $u(z,w)$ is the standard point-pair invariant function
\begin{eqnarray*}
u(z,w) = \frac{|z-w|^2}{4 \Im(z) \Im(w)},
\end{eqnarray*} and define 
\begin{displaymath}H(X; z,w) = \# \{ \gamma \in \Gamma : 4 u (z,\gamma w) + 2 \leq X \}.\end{displaymath}
Selberg \cite{selberg}  proved that 
\begin{equation}\label{equat}H(X; z, w) = \sum_{1/2 < s_j \leq 1} \pi^{1/2} \frac{\Gamma(s_j - 1/2)}{\Gamma(s_j+1)} u_j(z) \overline{u_j(w)} X^{s_j} + E(X; z,w),\end{equation}
with 
\begin{displaymath}E(X;z,w) =  O(X^{2/3}),\end{displaymath}
where $\{u_j\}_{j=1}^{\infty}$ is an orthonormal system of eigenfunctions for the discrete spectrum of the hyperbolic Laplacian and the sum is over the small eigenvalues $\lambda_j = s_j (1 - s_j)<1/4$ of the surface $\Gamma \backslash \mathbb{H}$. For earlier results and extensions see \cite{patt, gunth}. 

 Now, let $\mathcal{H} \subset \Gamma$ be a hyperbolic conjugacy class of $\Gamma$. Write $\mathcal{H}$ as $\mathcal{H} = \mathcal{P}^{\nu}$ where $\mathcal{P}$ is a primitive conjugacy class, i.e. $\mathcal{H} = \{a g^{\nu} a^{-1}, a \in \Gamma \}$, where $g$ is a primitive hyperbolic element of $\Gamma$. For $\g \in \Gamma$ define
\begin{displaymath}\mu (\gamma) =\inf_{z \in \mathbb{H}} \rho(z,\gamma z).\end{displaymath}
 Notice that $\mu(\gamma)$ is constant in conjugacy classes, hence we can define $\mu := \mu(\mathcal{H}) = \mu(g^{\nu})$, which is the length of the closed geodesic corresponding to the hyperbolic class $\mathcal{H}$.

 Let $z$ be a fixed point in $\mathbb{H}$, and define the quantity
\begin{displaymath}N_z (t) = \# \{ \gamma \in \mathcal{H} : \rho(z,\gamma z) \leq t \}.
\end{displaymath}
For $\Gamma$ cocompact Huber \cite{hub} was the first one who posed and studied the problem of estimating the asymptotic behavior of $N_z (t)$,  as $t \to \infty$. 
 Huber proved that the asymptotic behavior of $N_z (t)$ as $t \to \infty$ is:
\begin{equation} \label{hubermainterm}
N_z (t) \sim \frac{2}{\vol(\GmodH)} \frac{\mu}{\nu} X,
\end{equation}
where
\begin{equation}\label{changeofvariable}X= \frac{\sinh(t/2)}{\sinh(\mu/2)},\end{equation}
and $\vol(\GmodH)$ is the area of $\Gamma \backslash \mathbb{H}$.

 There is a nice geometric interpretation of this problem, explained in \cite{hub} and \cite{huber}. Let $\ell$ be the invariant closed geodesic of $g$. Then, $N_z(t )$ counts the number of $\gamma \in \Gamma / \langle g \rangle $ such that $\rho(\gamma z, \ell) \leq t$. This is the number of geodesic segments on $\GmodH$ from $z$ perpendicular to $\ell$ of length less than or equal to $t$. After conjugation, one can assume that $\ell$ lie on $\{yi, y>0\}$.  Huber's interpretation shows that $N_z(t)$ actually counts $\gamma$ in $\Gamma/  \langle g \rangle $ such that $\cos v \geq X^{-1}$, where $v$ is the angle defined by the ray from $0$ to $\gamma z$ and the geodesic $\{yi, y>0\}$.

 For $\Gamma$ cocompact or cofinite, Good \cite{good} proved a general sum formula that covers many cases of decompositions of the group $G = \sl$. One of these cases corresponds to  Huber's hyperbolic lattice point problem in conjugacy classes. In Good's notation the hyperbolic lattice point problem in conjugacy classes corresponds to the $_{\eta} G_{\zeta}$ case, whereas the classical one corresponds to the $_{\zeta} G_{\zeta}$ case \cite[p.~20, Eq. (3.12)]{good}. His method is based on defining certain Poincar\'e series $P_{\xi} (z, s, m)$ \cite[p.~73, Eq. (7.1)]{good}  as sums over cosets of a hyperbolic subgroup of $\Gamma$  of his basic eigenfunctions $V_{\xi}(z, s, \lambda)$ \cite[p.~28, Eq.(4.8)]{good}. These Poincar\'e series generalise the Eisenstein series and the resolvent kernel. He then expands a modification of $P_{\xi}(z, s, m)$ into automorphic eigenfunctions, and computes the Fourier expansion around $\zeta$. This involves generalizations of Kloosterman sums, leading to a local trace formula \cite[p.~98, Theorem 1]{good}. The end result is  Good's general formula \cite[Theorem 4, p.~116]{good}.  After matching notation  for $m=n=0$  this formula implies
 \begin{eqnarray*}
N_z(t) = \frac{2}{\vol(\GmodH)} \frac{\mu}{\nu} X + 2 \lambda_{\mathcal{H}} \lambda_{z} \sum_{\frac{1}{2} < s_j < 1} a_j( \mathcal{H}, z) X^{s_j} + E_z(t), \end{eqnarray*}
where 
\begin{displaymath}
E_z(t) = O(X^{2/3}),
\end{displaymath}
$\lambda_{\mathcal{H}}$, $\lambda_{z}$ are specific constants and $a_j(\mathcal{H}, z)$ are functions depending on $s_j, u_j, \mathcal{H}, z$ and special functions.

Later Huber \cite{huber} proved that, for $\Gamma$ cocompact:
$$\Big| N_z(t) - \frac{2}{\vol(\GmodH)} \frac{\mu}{\nu} X \Big| \leq c_1 X^{\tau} + c_2 X^{3/4} + c_3 X^{1/2},$$
where $\tau$ is an parameter related to the eigenvalue $\lambda_1$ and $c_1, c_2, c_3$ are specific constants.

As the natural parametrization is given by  (\ref{changeofvariable}), we denote
$$N(\mathcal{H}, X; z)=N_z(t),$$
and work with $ N(\mathcal{H}, X; z)$ for the rest of this paper. Clearly
\begin{displaymath}N(\mathcal{H}, X; z) = \# \left\{\gamma \in \mathcal{H} : \frac{\sinh(\rho(z,\gamma z)  /2)}{\sinh(\mu/2)}\leq X \right\}.\end{displaymath}
 We also define 
 \begin{equation}\label{main-term}
M(\mathcal{H}, X; z) = \sum_{1/2 < s_j \leq 1} A(s_j)  \hat{u}_j u_j(z) X^{s_j},
\end{equation}
where
\begin{equation}\label{a-function} A(s) = 2^{s-1} \left( e^{i \frac{\pi}{2}(s-1)} + e^{-i \frac{\pi}{2}(s-1)} \right) \frac{\Gamma \left(\frac{s+1}{2} \right) \Gamma \left(1 - \frac{s}{2} \right) \Gamma \left(s-\frac{1}{2} \right)}{ {\pi}\Gamma(s+1)},
\end{equation}
\begin{equation}\label{period} \hat{u}_j = \int_{\sigma} u_j \,ds \end{equation}
is the period integral of $u_j$ across a segment $\sigma$ of the invariant geodesic of $g^{\nu}$ with length $\int_{\sigma} ds = \mu / \nu$ (see Lemma \ref{lemmahuber}) and the sum in \eqref{main-term} is over the small eigenvalues of the hyperbolic Laplacian of $\Gamma \backslash \mathbb{H}$. We denote by $E(\mathcal{H}, X; z)$ the error term
\begin{displaymath}
E(\mathcal{H}, X; z) = N(\mathcal{H}, X; z) - M(\mathcal{H}, X; z).
\end{displaymath}
In section \ref{hubertransform} we refine the machinery of Huber in \cite{huber}. We compute his special functions $\xi_{\lambda}(v)$ (see \cite[Eq.(10), (11)]{huber}) in terms of the Legendre functions $P^0_{s-1}(i\tan v)$.  This allows to show the oscillatory behaviour of the Huber transform $d(f^{\pm}, t)$, see proposition \ref{estimates1}. In sections \ref{cocompactcase} and \ref{cofinitecase} we give a new proof of the following theorem.
\begin{theorem}{\cite[Theorem 4, p.~116]{good}} \label{theorem1} Let $\Gamma$ be a cocompact or cofinite Fuchsian group, and $\mathcal{H}$ a hyperbolic conjugacy class of $\Gamma$. Then
$$E(\mathcal{H}, X; z) = O(X^{2/3}).$$
\end{theorem}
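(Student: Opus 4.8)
The plan is to sharpen Huber's method from \cite{huber}: approximate the sharp counting function $N(\mathcal{H}, X; z)$ from above and below by smooth functions, expand the smoothed sums spectrally through the Huber transform, read off the main term $M(\mathcal{H},X;z)$ from the small eigenvalues, and control the remaining spectral sum using the oscillatory decay of $d(f^{\pm},t)$ established in Proposition \ref{estimates1}. Throughout I write $s_j = 1/2 + i t_j$ for the spectral parameters.

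First I would fix a parameter $\delta = \delta(X) \in (0,1)$ and choose smooth functions $f^{\pm}$ that majorize and minorize the indicator of the set $\{\sinh(\rho(z,\gamma z)/2)/\sinh(\mu/2) \le X\}$, coinciding with it outside a window of relative width $\delta$ around $X$ and with derivatives of the expected size there. Summing over $\mathcal{H}$ gives
\begin{displaymath}
\sum_{\gamma \in \mathcal{H}} f^{-}(\gamma) \le N(\mathcal{H}, X; z) \le \sum_{\gamma \in \mathcal{H}} f^{+}(\gamma),
\end{displaymath}
so it is enough to estimate the two smoothed counts. Applying the machinery of section \ref{hubertransform}, each smoothed sum expands as
\begin{displaymath}
\sum_{\gamma \in \mathcal{H}} f^{\pm}(\gamma) = \sum_j \hat{u}_j\, u_j(z)\, d(f^{\pm}, t_j) + (\text{continuous spectrum}),
\end{displaymath}
where $\hat{u}_j$ is the period integral \eqref{period} furnished by Lemma \ref{lemmahuber}.

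The small eigenvalues $1/2 < s_j \le 1$ produce the main term: for these, $d(f^{\pm}, t_j)$ agrees with $A(s_j) X^{s_j}$ up to the cost of smoothing, which, since $X^{s_j} \le X$, is $O(\delta X)$. Thus the discrete small spectrum contributes $M(\mathcal{H}, X; z) + O(\delta X)$. The decisive step is bounding the tempered part $\sum_{t_j \in \mathbb{R}} \hat{u}_j u_j(z)\, d(f^{\pm}, t_j)$ (and, in the cofinite case, the analogous Eisenstein contribution treated in section \ref{cofinitecase}). Here I would invoke Proposition \ref{estimates1}, which gives decay $d(f^{\pm}, t) \ll X^{1/2} t^{-3/2}$ with the smoothing effectively truncating the sum at $t_j \ll \delta^{-1}$. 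For the period integrals, Cauchy--Schwarz together with the local Weyl law integrated along $\sigma$ yields $\sum_{t_j \le T} |\hat{u}_j|^2 \ll T^2$, and combined with $\sum_{t_j \le T} |u_j(z)|^2 \ll T^2$ this gives $\sum_{T \le t_j \le 2T} |\hat{u}_j u_j(z)| \ll T^2$. Feeding this into a dyadic decomposition, each block contributes $X^{1/2} T^{1/2}$, and summing up to $\delta^{-1}$ bounds the tempered part by $O(\delta^{-1/2} X^{1/2})$.

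Finally I would balance the two error sources, $\delta X$ and $\delta^{-1/2} X^{1/2}$, by choosing $\delta = X^{-1/3}$; both become $O(X^{2/3})$, which, after sandwiching $N(\mathcal{H}, X; z)$ between the two smoothed counts, proves the theorem. The main obstacle is the decay of the Huber transform: the exponent $2/3$ hinges on the full $t^{-3/2}$ decay of $d(f^{\pm}, t)$, which comes from the oscillation of the Legendre functions $P^0_{s-1}(i\tan v)$ rather than from the trivial $t^{-1}$ bound. Establishing this oscillatory behaviour (Proposition \ref{estimates1}) and then ensuring the spectral sum converges with the correct power is where the real work lies; the sharper average results promised in the abstract would instead require replacing $\sum_{t_j \le T}|\hat{u}_j|^2 \ll T^2$ by the large sieve bounds, which lie beyond what is needed here.
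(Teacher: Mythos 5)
Your overall skeleton matches the paper's proof: sandwich the sharp count between $A(f^-)$ and $A(f^+)$, expand spectrally via the Huber transform, extract $M(\mathcal{H},X;z)$ from the small eigenvalues at a smoothing cost $O(Y)=O(\delta X)$, and control the tempered spectrum by dyadic decomposition, Cauchy--Schwarz and Weyl-type laws, balancing at $Y=X^{2/3}$. But your central analytic claim is false: Proposition \ref{estimates1} does \emph{not} give $d(f^{\pm},t)\ll X^{1/2}t^{-3/2}$. It gives $d(f^{\pm},t)\ll t^{-2}\min\{t,XY^{-1}\}X^{1/2}$, i.e.\ only $t^{-1}X^{1/2}$ in the main range $t\le XY^{-1}=\delta^{-1}$; by Stirling one checks $|B(1/2+it)(3/2+it)|\asymp t^{-1}$, so this rate is sharp. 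The $t^{-3/2}$ decay you invoke is a feature of the Selberg--Harish-Chandra transform in the \emph{classical} lattice point problem, not of the Huber transform --- this is precisely one of the points where the conjugacy-class problem differs.

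Second, you replace the paper's period-integral bound by the lossy $\sum_{t_j\le T}|\hat{u}_j|^2\ll T^2$ (which does follow from the local Weyl law integrated along $\sigma$, as you say), whereas the proof needs Huber's Lemma \ref{lemmahuber2}, namely $\sum_{t_j\le T}|\hat{u}_j|^2\ll T$. Your two errors cancel numerically: your dyadic block $X^{1/2}T^{-3/2}\cdot T^2=X^{1/2}T^{1/2}$ coincides with the paper's $X^{1/2}T^{-1}\cdot T^{1/2}\cdot T$, so you land on $X^{2/3}$ --- but by invalid means. With the \emph{correct} decay $t^{-1}X^{1/2}$, your $T^2$ bound gives blocks of size $X^{1/2}T$, hence a total of $X^{3/2}Y^{-1}$ (and a non-convergent tail beyond $\delta^{-1}$ unless one uses the second regime $t^{-2}XY^{-1}X^{1/2}$ of Proposition \ref{estimates1}(b) rather than a sharp truncation at $t\ll\delta^{-1}$), and balancing against $Y$ then yields only $E(\mathcal{H},X;z)\ll X^{3/4}$ --- essentially Huber's older bound, which is exactly what this theorem improves. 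Repairing your argument means importing Lemma \ref{lemmahuber2} for the $T^{1/2}$ saving on periods and the full two-regime bound on $d(f^{\pm},t)$ for the tail. Minor further gaps: in the cofinite case you also need $A(f)\in L^2(\GmodH)$ (Lemma \ref{afinl2}) and the Eisenstein analogues (Lemmas \ref{eisensteinlemma1} and \ref{eisensteinlemma2}), and the contributions from $|t_j|\le 1$ and $t=0$ (the source of the $X^{1/2}\log X$ term) are glossed over.
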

 We combine the results and techniques of section \ref{hubertransform} with the large sieve  inequalities obtained by Chamizo in \cite{cham1} to prove average results for the error term $E(\mathcal{H}, X; z)$, similar to those in \cite{cham2} for the error term of the classical hyperbolic lattice point problem. In section \ref{averagingresults} we prove the main results of this paper.
\begin{theorem}\label{average1}  Let $\Gamma$ be a cocompact or cofinite Fuchsian group, and $\mathcal{H}$ a hyperbolic conjugacy class of $\Gamma$. Then
$$\frac{1}{X} \int_{X}^{2X} |E(\mathcal{H}, x  ; z)|^2 dx  \ll  X \log^2 X,$$
where the constant implied in  {\lq $\ll$\rq}  depends on $\G, \mathcal{H}$ and $z$.
\end{theorem}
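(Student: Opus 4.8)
The plan is to convert the counting function into a spectral sum via the Huber-transform machinery of Section~\ref{hubertransform}, and then bound its second moment in $x$ using the large sieve inequalities of Chamizo \cite{cham1}, in close analogy with the treatment of the classical problem in \cite{cham2}. Write the tempered part of the spectrum as $\lambda_j = \tfrac14 + t_j^2$ with $t_j \in \mathbb{R}$. Applying the expansion underlying (\ref{main-term}) to the sharp kernel $f$ of the counting region gives $N(\mathcal{H},x;z) = \sum_j \hat{u}_j u_j(z)\, d(f,t_j)$, together with a continuous-spectrum integral when $\Gamma$ is cofinite; subtracting the small-eigenvalue part $M(\mathcal{H},x;z)$ leaves $E(\mathcal{H},x;z)$ as the contribution of the spectrum with $t_j$ real (plus the Eisenstein integral). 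Since $f$ is discontinuous, I would first sandwich it between smooth radial minorant and majorant kernels $f^{-} \le f \le f^{+}$ agreeing with $f$ outside a shell of width $\delta$. This splits $E = E^{\pm} + R^{\pm}$, where $E^{\pm}$ is the smooth spectral sum and $R^{\pm}$ counts orbit points in the shell; by Huber's asymptotic (\ref{hubermainterm}) applied to the shell, $R^{\pm} = O(\delta x)$ up to lower order, contributing $O(\delta^2 X^2)$ to the mean square.

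For the smooth part I would insert the oscillatory estimates of Proposition~\ref{estimates1}: for real $t$ the Huber transform $d(f^{\pm},t)$ oscillates like $x^{1/2}e^{\pm i t \log x}$ with amplitude of size $\asymp x^{1/2} t^{-1}$, matching $|A(\tfrac12 + it)| \asymp t^{-1}$ for the coefficient in (\ref{a-function}), and with a remainder decaying fast enough in $t$ to allow truncating the sum at $t_j \le T$. The mean square then reduces, schematically, to
$$\frac{1}{X}\int_X^{2X} \bigl|E^{\pm}\bigr|^2\,dx \;\ll\; X\cdot\frac{1}{X}\int_X^{2X}\Bigl|\sum_{t_j\le T}\frac{\hat{u}_j u_j(z)}{t_j}\,x^{it_j}\Bigr|^2 dx \;+\; (\text{continuous part}).$$
Expanding the square yields diagonal terms $t_j = t_k$, which produce the main contribution, and off-diagonal terms weighted by $\int_X^{2X} x^{\,i(t_j-t_k)}\,dx \ll X/(1+|t_j-t_k|)$, which decay away from the diagonal. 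This is precisely the configuration that Chamizo's large sieve in \cite{cham1} is built to estimate; applying it with coefficients $b_j = \hat{u}_j u_j(z)/t_j$ reduces the whole bound to the spectral second moment $\sum_{t_j \le T} |\hat{u}_j|^2 |u_j(z)|^2 t_j^{-2}$, and the continuous part is handled identically with $\hat{u}_j, u_j(z)$ replaced by the corresponding Eisenstein period and value.

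It then remains to control this spectral sum. I would split it into dyadic ranges $t_j \sim \tau$ and invoke the large sieve bound for the pairing of the geodesic period integrals $\hat{u}_j$ with the point evaluations $u_j(z)$, which controls $\sum_{t_j\sim\tau}|\hat{u}_j|^2|u_j(z)|^2$ by $O(\tau)$ up to logarithms; after accounting for the density $\sim\tau$ of the spectrum near $\tau$, each of the $O(\log X)$ dyadic scales contributes $O(X)$ to the mean square. The two logarithmic factors arise from the summation over these dyadic scales together with the logarithmic loss inherent in the large sieve on Riemann surfaces. Finally, optimizing the smoothing width $\delta \sim X^{-1/2}$ (so that $R^{\pm}$ contributes $O(X)$ and the effective truncation is $T \sim \delta^{-1}$) balances the smooth and boundary parts and yields the stated bound $X\log^2 X$.

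The main obstacle is the joint large-sieve control of $\hat{u}_j u_j(z)$. Unlike the point evaluations appearing in the classical problem, the period integrals $\hat{u}_j$ carry no elementary bound, so the decisive input is a large sieve inequality in \cite{cham1} strong enough to keep $\sum_{t_j\le T}|\hat{u}_j|^2|u_j(z)|^2 t_j^{-2}$ of size $O(\log X)$ uniformly as $T\to\infty$. The second delicate point is that Proposition~\ref{estimates1} must supply the amplitude $\asymp x^{1/2}t^{-1}$ and the oscillation $e^{\pm it\log x}$ with a remainder uniform in both $t$ and $x$, which rests on the Legendre-function asymptotics for $P^0_{s-1}(i\tan v)$. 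Balancing the smoothing against the truncation so as to obtain exactly two logarithms rather than a factor $X^{\epsilon}$ is the final technical hurdle, and in the cofinite case the continuous spectrum must be folded in through the analogous large sieve.
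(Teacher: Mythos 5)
Your overall architecture (sandwich by the kernels $f^{\pm}$, oscillatory estimates from Proposition \ref{estimates1}, dyadic blocks in the spectral parameter, Chamizo's sieve, discretizing the $x$-integral) matches the paper's, but the decisive step is misapplied, and as formulated it has a genuine gap. In Chamizo's Theorem \ref{lemmacham1} the point values $u_j(z)$ and $E_{\mathfrak{a}}(z,1/2+it)$ are \emph{structural} weights on the left-hand side of the inequality, not free coefficients: the correct application, as in Proposition \ref{averaging1}, takes $a_j = d(f,t_j)\hat{u}_j$ (equivalently $a(t_j,Y/X)\hat{u}_j$ after extracting $X^{1/2\pm it_j}$), so that the sieve reduces the block $T<|t_j|\le 2T$ to $\|a\|_{*}^2 \ll XT^{-3}\min\{T^2, X^2Y^{-2}\}$, controlled by Huber's period bound $\sum_{t_j\le T}|\hat{u}_j|^2 \ll T$ (Lemma \ref{lemmahuber2}) together with its Eisenstein analogue $\int_{-T}^{T}|\hat{E}_{\mathfrak{a}}(1/2+it)|^2\,dt \ll T$ --- which you assume comes for free but is not in \cite{cham1} and requires its own proof (Lemma \ref{eisensteinlemma2}, via Parseval with a specially chosen test function). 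You instead fold $u_j(z)$ into the coefficients ($b_j = \hat{u}_j u_j(z)/t_j$) and then need the joint-moment bound $\sum_{t_j\sim\tau}|\hat{u}_j|^2|u_j(z)|^2 \ll \tau$ up to logarithms. No such inequality exists in \cite{cham1}, and it is not provable with the paper's tools: the best available is $\ll \tau^2$, by combining $\sum_{t_j\le\tau}|\hat{u}_j|^2\ll\tau$ with the local Weyl law $\sum_{t_j\sim\tau}|u_j(z)|^2\ll\tau^2$ (equivalently $|u_j(z)|\ll t_j^{1/2}$); an $O(\tau)$ bound would be a nontrivial correlation statement between periods and point values, essentially the "independence" heuristic, not a theorem.

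This gap is fatal to your scheme, not merely cosmetic: with only the $\tau^2$ joint bound, your diagonal/off-diagonal mean value (kernel $X/(1+|t_j-t_k|)$, with spectral density $\asymp\tau$ per unit interval forcing a near-diagonal loss of a factor $t_j$) yields per dyadic block roughly $X\sum_{t_j\sim T}t_j|b_j|^2 \ll XT$ for $T\le XY^{-1}$, hence a mean square $\ll X^2Y^{-1}\log X$; optimized against the smoothing cost $Y^2$ this returns only $O(X^{4/3+\epsilon})$ --- exactly the square of the pointwise bound $O(X^{2/3})$, i.e.\ no averaging gain at all. The point of keeping $u_j(z)$ structural is that, for well-spaced radii with $R\delta \gg X$ and $R > X^{1/2}$, the per-point cost of a block is $(T^2/R + T)\|a\|_{*}^2 \ll X$, and the $O(\log X)$ blocks plus one Cauchy--Schwarz give $X\log^2 X$, after which one passes to the integral by equally spaced points and $R\to\infty$ (Theorem \ref{integral1}). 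Two further points are fixable but wrong as stated: Proposition \ref{estimates1}(b) gives amplitude $t^{-2}\min\{t,XY^{-1}\}$, not $t^{-1}$ throughout, and the range $XY^{-1}<|t_j|\le X^2Y^{-2}$ must stay inside the sieve while the tail $|t_j|>X^2Y^{-2}$ is removed by the local Weyl laws --- there is no smoothing-induced truncation at $T\sim\delta^{-1}$; and bounding the shell count by "Huber's asymptotic applied to the shell" is circular (the asymptotic carries no error term at that scale) --- the paper's sandwich $A(f^-)\le N\le A(f^+)$, with the $O(Y+X^{1/2}\log X)$ discrepancy emerging from the spectral side, is what justifies the $RY^2$ term.
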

\begin{theorem} \label{average2} Let $\Gamma$ be a cocompact Fuchsian group, and $\mathcal{H}$ a hyperbolic conjugacy class of $\Gamma$. Then, for $n=1,2$
$$ \int_{\GmodH} |E(\mathcal{H}, X ; z)|^{2n} d\mu(z) \ll  X^{n} \log^{2n} X,$$
where the constant implied in  {\lq $\ll$\rq}  depends on $\G$ and  $\mathcal{H}$.
\end{theorem}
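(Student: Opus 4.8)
The plan is to expand the error term spectrally in the point $z$ and feed the resulting Fourier coefficients into the large sieve inequalities of Chamizo \cite{cham1}, in the spirit of \cite{cham2} for the classical problem. Because $\Gamma$ is cocompact the spectrum of the Laplacian is purely discrete, so there is no continuous contribution to handle; this is exactly what restricts the present statement to the cocompact setting. As in the proof of Theorem \ref{theorem1}, I would first replace the automorphic (in $z$) counting function $N(\mathcal{H},X;z)$ by smooth majorants and minorants $N^{\pm}$ depending on a smoothing width $\delta$ to be optimised at the end. Each of these has a spectral expansion whose $j$-th coefficient factors as the period integral $\hat{u}_j$ of \eqref{period} times the Huber transform $d(f^{\pm},t_j)$ analysed in section \ref{hubertransform}. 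Removing the small-eigenvalue terms, which reproduce $M(\mathcal{H},X;z)$ as in \eqref{main-term}, leaves for the error term an expansion of the shape $E(\mathcal{H},X;z)=\sum_{t_j\in\R}\hat{u}_j\,d(s_j,X)\,u_j(z)+(\text{smoothing error})$, where Proposition \ref{estimates1} controls the oscillation and decay of $d(s_j,X)$.

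For $n=1$ the argument is direct. Since $\{u_j\}$ is orthonormal on $\GmodH$, Parseval gives $\int_{\GmodH}|E(\mathcal{H},X;z)|^2\,d\mu(z)=\sum_{t_j\in\R}|\hat{u}_j|^2\,|d(s_j,X)|^2$ for the smoothed count. I would split this spectral sum into dyadic ranges $T\le t_j<2T$, insert the decay of $d(s_j,X)$ from Proposition \ref{estimates1}, and control the periods by the large sieve of \cite{cham1} in its period-integral form $\sum_{t_j\le T}|\hat{u}_j|^2\ll_{\mathcal{H}}T$. Summing the $O(\log X)$ dyadic blocks and balancing the smoothing error against the main estimate produces $X\log^2 X$, the logarithms arising exactly as in the classical treatment of \cite{cham2}.

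The genuinely hard case is $n=2$. Writing $F(z)=|E(\mathcal{H},X;z)|^2$, one has $\int_{\GmodH}|E|^4\,d\mu=\|F\|_2^2$; but a naive spectral expansion of $F$ produces triple products $\int_{\GmodH}u_{j_1}\overline{u_{j_2}}\,\overline{u_k}\,d\mu(z)$ which do not diagonalise and cannot be estimated individually. To bypass this I would not expand $F$ at all, but apply directly the fourth-moment version of Chamizo's large sieve from \cite{cham1}, which bounds $\int_{\GmodH}\bigl|\sum_{t_j}a_j\,u_j(z)\bigr|^4 d\mu(z)$ in terms of the coefficient sequence $a_j=\hat{u}_j\,d(s_j,X)$. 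Inserting once more the decay from Proposition \ref{estimates1} together with the period-integral sieve, decomposing dyadically, and optimising $\delta$ should yield $X^2\log^4 X$.

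The principal obstacle, where essentially all of the technical effort concentrates, is this $L^4$ step. One must verify that Chamizo's fourth-moment inequality is available with the period weights $\hat{u}_j$ in place of the usual point samples $u_j(z_n)$, and that the smoothing defect $N^{+}-N^{-}$ remains subordinate in $L^4$ after $\delta$ is optimised. The $L^2$ sieve for periods is comparatively routine, but its fourth-power analogue, and the requirement that every estimate be uniform in the dyadic parameter $T$, are the delicate points that make $n=2$ substantially harder than $n=1$.
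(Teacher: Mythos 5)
Your outline is essentially correct and converges on the same key inputs as the paper, but the route differs in instructive ways. For $n=1$ you use exact Parseval, which does work in the cocompact case and is in fact simpler than what the paper does: the paper never integrates directly, but proves a discrete statement (Proposition \ref{averaging2}) for well-spaced points $z_1,\dots,z_R$ with $\rho(z_i,z_j)>\delta$ via Chamizo's spatial sieve (Theorem \ref{lemmacham2}), and then recovers the integral as a limit of Riemann sums with $R\delta^2\gg 1$, $R>X^{1/2}$ (Theorem \ref{integral2}). Your Parseval computation, combined with $d(f^{\pm},t)\ll t^{-2}\min\{t,XY^{-1}\}X^{1/2}$ from Proposition \ref{estimates1} and the period bound, gives $\int_{\GmodH}|E_{f^{\pm}}|^2\,d\mu \ll X+Y$, so the $X\log^2X$ comes entirely from the unsmoothing defect $O(Y+X^{1/2}\log X)$ with $Y\asymp X^{1/2}$; this bypasses the large sieve altogether for $n=1$, whereas the paper's route buys, as a byproduct, the discrete average over well-spaced points, which holds also for cofinite groups away from the cusps. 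For $n=2$ your plan coincides in substance with the paper's: one applies a fourth-moment sieve to the coefficients $a_j=d(f,t_j)\hat{u}_j$ in dyadic blocks $T\le |t_j|<2T$, with $\|a\|_*^2\ll XT^{-3}\min\{T^2,X^2Y^{-2}\}$ as in \eqref{boundofnorma}, a weighted Cauchy--Schwarz/H\"older over the $O(\log X)$ blocks producing the $\log^3 X$, and the optimization $Y=X^{2/3}R^{-1/3}$.

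Three corrections, none fatal. First, the bound $\sum_{t_j\le T}|\hat{u}_j|^2\ll T$ is Huber's Lemma \ref{lemmahuber2} (a local Weyl-type law for periods), not a Chamizo sieve. Second, the fourth-moment inequality \eqref{bound10} is taken from Chamizo's thesis \cite{cham0}, not \cite{cham1}, and it is stated in the standard point-sample form over well-spaced $z_m$; your self-identified ``principal obstacle''---needing a version with period weights $\hat{u}_j$ in place of point samples---is a red herring, since the periods enter only through the coefficient norm $\|a\|_*$ (controlled by Lemma \ref{lemmahuber2} and Proposition \ref{estimates1}), the sieve variable remains the spatial point, and the integral $L^4$ bound you want then follows by the same Riemann-sum limit as in Theorem \ref{integral2}. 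Third, cocompactness is not imposed because the spectrum is purely discrete: both sieve theorems include the Eisenstein contribution, and Proposition \ref{averaging2} is proved for cofinite $\Gamma$ with points away from the cusps. It is needed in the passage from the discrete average to the integral over the whole surface, because the sieve constants depend on $\max y_{\G}(z_m)$ and degenerate in the cusp, so uniformity of the Riemann-sum argument requires a compact quotient.
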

 Finally in section \ref{arithmeticapplications} we use Huber's geometric interpretation to study some arithmetic consequences of our results. More specifically, for $\mathcal{H}$ a hyperbolic class of $\slz$, we interpret the quantity $N(\mathcal{H}, X; z)$ in terms of  the number of solutions of  indefinite quadratic forms in four variables with restrictions.

 \begin{remark}
 Our use of piecewise linear functions $f^{\pm}$ to define the smooth automorphic functions
 $A(f^{\pm})(z)$ is much simpler than the construction and spectral expansion of Poincar\'e series in Good \cite{good}. Moreover,  the oscillatory behaviour that is crucial in the application of the large sieve seems difficult to identify in the local trace formula in \cite{good}. Even matching Good's expansion \cite[Theorem~4, p.~116]{good} with $M(\mathcal{H}, X; z)$ seems to be a complicated task needing extensive calculations. Only the leading term of  $M(\mathcal{H}, X; z)$ is easy to match.
 \end{remark}

\begin{remark} Eskin and McMullen used ergodic methods to study the asymptotics of various counting problems on Lie groups, one of which is the conjugacy class problem \cite[III.2, p.~187]{eskin}. Duke, Rudnick and Sarnak \cite{duke}  give another proof of the main term \cite[Example 1.5, p.~147]{duke}. For the classical hyperbolic lattice point problem one deals with the locally symmetric space $\G \backslash \sl / SO(2)$. Our case involves the space  $\G \backslash \sl / A $ which is not even Hausdorff, where $A$ is the group of diagonal matrices. \end{remark} 

\begin{remark}Hill and Parnovski  \cite{hillparn} studied the asymptotic behavior of the variance of the hyperbolic lattice point counting function for the classical hyperbolic lattice point  problem. When $\G$ has no small eigenvalues, in Theorem \ref{average2} we provide an upper bound for the variance of the hyperbolic lattice point  function in our situation.\end{remark}

\begin{remark} Recently Parkkonen and Paulin \cite{parkpaul} studied the hyperbolic lattice point problem in conjugacy classes for higher dimensional negatively curved manifolds. 
In special cases i.e. for compact manifolds or arithmetic group of isometries, they obtain bounds for the error term. It would be interesting to prove error bounds analogous to Theorem \ref{theorem1}, and average results analogous to Theorems \ref{average1}, \ref{average2}.
\end{remark}

\begin{remark} An interesting application of the hyperbolic lattice point problem in conjugacy classes and its geometric interpretation concerns degenerating Riemann surfaces and the appearance of Eisenstein series, see \cite{gjm}.
\end{remark}

\section{Certain automorphic functions and their spectral expansion}\label{hubertransform}

\subsection{ Plan of proof and comparison with the classical problem.}

Let $K(z,w)$ be the automorphic kernel defined as
\begin{displaymath}
K(z,w) = \sum_{\gamma \in \G} k(u(\gamma z, w)),
\end{displaymath}
for a test function $k(u)$. If $k(u)$ is the characteristic function of the interval $[0, {(X-2)}/{4}],$ then $K(z,w) =H(X;z,w)$, and the asymptotics of $H(X;z,w)$ can be studied using the pre-trace formula for the kernel $K(z,w)$. 
In practice one needs approximations $k_{\pm}(u)$ of $k(u)$ and estimates of the corresponding Selberg--Harish-Chandra transforms $h_{\pm}(t)$ of $k_{\pm}(u)$.

 When we restrict the summation to the conjugacy class $\mathcal{H} \subset \G$, we do not get an automorphic kernel $g(u)$ in the place of $k(u)$. Therefore, Selberg theory does not apply in this case. Huber, however,  defined an automorphic function $A(f)$ that plays the role of $K(z,w)$ for a suitable test function $f$ when $z=w$, see \eqref{af}. The spectral expansion of $A(f)$ provides the asymptotic behavior of $N(\mathcal H, X; z)$.
 
Assume that $\Gamma$ is cocompact. Let $C_0^*[1, \infty)$ be the space of real functions of compact support that are bounded in $[1, \infty)$ and have at most finitely many discontinuities. For an $f$ in $C_0^*[1, \infty)$, define the $\Gamma$-automorphic function 
\begin{equation}\label{af}
A(f) (z)= \sum_{\gamma \in \mathcal{H}} f \left( \frac{\cosh \rho(z,\gamma z) -1}{\cosh \mu(\gamma)-1} \right) .
\end{equation}
Since $\Gamma$ is cocompact and $f$ has compact support, the sum in \eqref{af} is finite. 

Let $\Delta$ be the hyperbolic Laplace operator on $\GmodH$ and $\{u_j \}_{j=0}^{\infty}$ be an orthonormal system of (real-valued) automorphic eigenfunctions of $-\Delta$, with corresponding eigenvalues $\{ \lambda_j\}_{j=0}^{\infty}$. Then $A(f)$ has an $L^2$-expansion:
$$A(f) = \sum_{j} c (f, t_j) u_j(z),$$
where $\lambda_j = 1/4+ t_j^2$ and 
$$c(f, t_j) = \int_{\GmodH} A(f)(z) u_j(z) \,d \mu(z)$$
is the $j$-th Fourier coefficient of $A(f)$. We have the following lemma:
\begin{lemma}[Huber, \cite{huber}]\label{lemmahuber} We have
$$c(f,t_j) = 2 \hat{u}_j d (f, t_j),$$
where $\hat{u}_j$ is the integral 
\begin{equation}\label{periodintegral}
\hat{u}_j=\int_{\sigma} u_j ds
\end{equation}
across a segment $\sigma$ of the invariant geodesic of $g$ with length $\int_{\sigma} ds = \mu / \nu$,
\begin{equation}\label{coefficients}
d(f, t) = \int_{0}^{\frac{\pi}{2}} f \left(\frac{1}{\cos^2 v} \right) \frac{\xi_{\lambda}(v)}{\cos^2 v} dv,
\end{equation}
with $\lambda=1/4+t^2$, and $\xi_{\lambda}$  is the solution of the differential equation
\begin{equation}\label{huberresult}\xi_{\lambda}''(v) + \frac{\lambda}{\cos^2 v} \xi_\lambda(v) = 0, \quad v \in \Big(-\frac{\pi}{2}, \frac{\pi}{2} \Big), \end{equation}
with $\xi_{\lambda}(0)=1$, $\xi_{\lambda}'(0)=0$.
\end{lemma}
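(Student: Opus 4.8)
The plan is to compute the Fourier coefficient $c(f,t_j)=\int_{\GmodH}A(f)(z)u_j(z)\,d\mu(z)$ by unfolding against the cyclic subgroup $\langle g\rangle$ and then separating variables. First I would use that the conjugacy class $\mathcal H=\mathcal P^\nu$ is parametrised by the cosets $\Gamma/\langle g\rangle$, since for primitive $g$ the centraliser of $g^\nu$ is exactly $\langle g\rangle$. Writing a general element of $\mathcal H$ as $ag^\nu a^{-1}$ and using that $\rho$ is $\psl$-invariant, $\rho(z,ag^\nu a^{-1}z)=\rho(a^{-1}z,g^\nu a^{-1}z)$, so that
\[
A(f)(z)=\sum_{b\in\langle g\rangle\backslash\Gamma}F(bz),\qquad F(w)=f\!\left(\frac{\cosh\rho(w,g^\nu w)-1}{\cosh\mu-1}\right).
\]
The function $F$ is $\langle g\rangle$-invariant because $\rho(gw,g^\nu gw)=\rho(w,g^\nu w)$, and $u_j$ is $\Gamma$-invariant, so the standard unfolding lemma gives
\[
c(f,t_j)=\int_{\langle g\rangle\backslash\H}F(z)\,u_j(z)\,d\mu(z).
\]

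Next I would pass to the explicit model in which $g$ is the dilation $z\mapsto e^{\mu/\nu}z$, whose axis is the imaginary axis and whose $\nu$-th power is $z\mapsto e^{\mu}z$. A direct computation with the point-pair invariant gives $\cosh\rho(z,g^\nu z)-1=(\cosh\mu-1)\,|z|^2/(\Im z)^2$, so the argument of $f$ collapses to $|z|^2/(\Im z)^2=1/\sin^2\theta$ in polar coordinates $z=re^{i\theta}$. Taking the fundamental domain $\{1\le r<e^{\mu/\nu},\ 0<\theta<\pi\}$ for $\langle g\rangle$ and using $d\mu=\frac{dr\,d\theta}{r\sin^2\theta}$, I would set $s=\log r$ and define the along-axis average
\[
U_j(\theta)=\int_0^{\mu/\nu}u_j(e^{s}e^{i\theta})\,ds,
\]
reducing the coefficient to the single integral $c(f,t_j)=\int_0^\pi f(1/\sin^2\theta)\,U_j(\theta)\,\sin^{-2}\theta\,d\theta$.

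The heart of the argument is to show that $U_j$ satisfies Huber's differential equation \eqref{huberresult}. In the coordinates $(s,\theta)$ the Laplacian factors as $\Delta=\sin^2\theta\,(\partial_s^2+\partial_\theta^2)$, so the eigenequation reads $(\partial_s^2+\partial_\theta^2)u_j=-\lambda_j\sin^{-2}\theta\,u_j$. Differentiating $U_j$ twice in $\theta$ and inserting this identity, the term $\int_0^{\mu/\nu}\partial_s^2 u_j\,ds=[\partial_s u_j]_0^{\mu/\nu}$ vanishes because $u_j(e^{s+\mu/\nu}e^{i\theta})=u_j(g\cdot e^se^{i\theta})=u_j(e^se^{i\theta})$ is periodic in $s$; hence $U_j''(\theta)+\lambda_j\sin^{-2}\theta\,U_j(\theta)=0$. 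After the substitution $\theta=\pi/2\mp v$ this is exactly $\xi''+\lambda\cos^{-2}v\,\xi=0$. To pin down the solution and produce the constant, I would symmetrise: writing $V_j(v)=U_j(\pi/2-v)+U_j(\pi/2+v)$, both summands solve the equation, so $V_j$ solves it too, while $V_j$ is even with $V_j(0)=2U_j(\pi/2)=2\int_\sigma u_j\,ds=2\hat u_j$ and $V_j'(0)=0$. Uniqueness of the initial value problem then forces $V_j(v)=2\hat u_j\,\xi_{\lambda_j}(v)$, and folding the $\theta$-integral over $(0,\pi)$ onto $(0,\pi/2)$ via $\sin\theta=\cos v$ turns $c(f,t_j)$ into $2\hat u_j\int_0^{\pi/2}f(1/\cos^2 v)\,\xi_{\lambda_j}(v)\cos^{-2}v\,dv=2\hat u_j\,d(f,t_j)$.

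The main obstacle I anticipate is the passage from the two-dimensional eigenvalue equation to the one-dimensional ODE: one must justify differentiating under the integral sign in $U_j$ and, crucially, observe that the $\partial_s^2$ contribution integrates to a boundary term that cancels by the $g$-periodicity of $u_j$. The remaining delicate point is bookkeeping the initial conditions and the factor $2$, which is handled cleanly by the $v\mapsto-v$ symmetrisation rather than by tracking a single non-even solution of \eqref{huberresult}.
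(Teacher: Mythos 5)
Correct. The paper does not reprint a proof of this lemma (it is quoted from Huber's paper), and your argument is a faithful, self-contained reconstruction of exactly Huber's method: unfold over $\langle g\rangle$ using that the centralizer of $g^{\nu}$ is $\langle g\rangle$, conjugate $g$ to $z\mapsto e^{\mu/\nu}z$ so that $\cosh\rho(z,g^{\nu}z)-1=(\cosh\mu-1)\,|z|^2/\Im(z)^2$, use $\Delta=\sin^2\theta\,(\partial_s^2+\partial_\theta^2)$ in the coordinates $z=e^{s}e^{i\theta}$ with the boundary term killed by $g$-periodicity, and pin down the solution of \eqref{huberresult} by the symmetrization $V_j(v)=U_j(\pi/2-v)+U_j(\pi/2+v)$, whose initial data $V_j(0)=2\hat{u}_j$, $V_j'(0)=0$ produce precisely the factor $2\hat{u}_j$ in $c(f,t_j)=2\hat{u}_j\,d(f,t_j)$; all of these steps check out.
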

 The coefficient $d(f,t)$, which we call the Huber transform of $f$, now plays the role of the Selberg--Harish-Chandra transform. For our choice of test functions $f$  we can use properties of special functions to estimate the Fourier coefficients  $d(f, t_j)$, see Proposition \ref{estimates1}. The next table summarizes the analogies  between the two problems.
 \\
\begin{center}
    \begin{tabular}{ | c | c | c | }
    \hline
    Classical problem & For conjugacy classes \\ \hline
          $z,w$ & $z, \mathcal{H}$ \\ \hline
          $k(u)$ & $f \left( \frac{\cosh \rho -1}{\cosh \mu-1} \right)$ \\ \hline
    $K(z,w)$ & $A(f)(z)$ \\ \hline
    $h(t)$ & $d(f,t)$ \\ \hline
    $u_j(w)$ & $\hat{u}_j$ \\ 
    \hline
    \end{tabular}
\end{center}

In order to bound  $E(\mathcal{H}, X; z)$, we will  need the following bound (weak type of Weyl's law) for the period integrals of $\hat{u}_j$'s defined in Lemma \ref{lemmahuber}: 
\begin{lemma}[Huber, {\cite[ eq.~(63), p.~24]{huber}} ]\label{lemmahuber2} For the sequence of the period integrals $\{ \hat{u}_j \}_{j=0}^{\infty}$, the following estimate holds:
$$ \sum_{t_j \leq T} |\hat{u}_j|^2 \ll T.$$
\end{lemma}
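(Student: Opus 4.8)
The plan is to prove this second-moment bound by the pre-trace formula combined with a spectral-localization argument, treating it as a geodesic-restricted analogue of the local Weyl law. The key observation is that $\hat{u}_j = \int_\sigma u_j \, ds$ is the pairing of $u_j$ against arclength measure on the compact geodesic segment $\sigma$, so the quantity to be bounded is a spectral projection of that measure onto the window $t_j \le T$.

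First I would choose a nonnegative point-pair test function $k$ whose Selberg--Harish-Chandra transform $h(t)$ satisfies $h(t) \ge 0$ for all $t$ occurring in the spectrum (including the finitely many exceptional $t_j \in i(0,1/2]$) and $h(t) \ge 1$ for $|t| \le T$. The standard construction takes $k$ supported in a hyperbolic ball of radius $\asymp T^{-1}$ about the origin, which forces the $L^\infty$-height of $k$ to be of order $T^2$. Feeding this into the pre-trace formula and integrating both sides over $z,w \in \sigma$ gives
$$\sum_j h(t_j)\,\hat{u}_j^2 + \frac{1}{4\pi}\sum_{\mathfrak a}\int h(t)\,\bigl|\textstyle\int_\sigma E_{\mathfrak a}(\cdot,\tfrac12+it)\,ds\bigr|^2\,dt = \int_\sigma\!\int_\sigma \sum_{\gamma\in\Gamma} k(u(z,\gamma w))\,ds(z)\,ds(w).$$
In the cofinite case the continuous contribution is manifestly $\ge 0$ because $h \ge 0$, so discarding it only strengthens the resulting inequality (in the cocompact case it is simply absent); in either case $\sum_{t_j \le T}\hat{u}_j^2 \le \sum_j h(t_j)\hat{u}_j^2$ is bounded by the geometric side.

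It remains to estimate the geometric side. The diagonal term $\gamma=\mathrm{id}$ contributes $\int_\sigma\int_\sigma k(u(z,w))\,ds\,ds$; since $k$ has height $\asymp T^2$ and is supported where $\rho(z,w)\lesssim T^{-1}$, for each fixed $z\in\sigma$ the inner integral runs over a subarc of length $\asymp T^{-1}$, so the double integral is $\asymp T^2\cdot T^{-1}\cdot(\text{length of }\sigma)\asymp T$. This is the expected main term and the source of the bound $\ll T$. For $\gamma\ne\mathrm{id}$ the support condition forces $\gamma\sigma$ to come within $\asymp T^{-1}$ of $\sigma$; by discreteness only finitely many $\gamma$ do so, and as $T\to\infty$ these reduce to $\gamma$ with $\gamma\sigma\cap\sigma\ne\emptyset$. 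These are the axial translates $g^{\pm1}$ (which meet $\sigma$ only in a shared endpoint, as $\sigma$ is exactly one period of the primitive geodesic of $g$) and the boundedly many $\gamma$ whose image crosses $\sigma$ transversally; a direct computation shows that each such term is $O(1)$, so their total is $O(1)$. Combining the two contributions, the geometric side is $O(T)$, which completes the proof.

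I expect the main obstacle to be the careful bookkeeping of the off-diagonal geometric terms --- in particular verifying that the adjacent translates $g^{\pm1}$ and the transversal intersections each contribute only $O(1)$ rather than a quantity growing with $T$ --- together with the standard but slightly delicate requirement that the test function be chosen so that $h(t)\ge 0$ persists across the exceptional spectrum. The diagonal term, by contrast, transparently produces the claimed order $T$.
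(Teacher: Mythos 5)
Your argument is sound, but it follows a genuinely different route from the one behind this lemma. The paper does not prove the statement itself: it is quoted from Huber \cite[eq.~(63), p.~24]{huber}, and the method of that proof is reproduced in this paper for the Eisenstein analogue (Lemma \ref{eisensteinlemma2}). Huber's argument stays entirely inside the $A(f)$ framework rather than invoking the pre-trace formula: one takes $f$ to be the indicator corresponding to a thin window, $X=\sqrt{1+2/T^2}$, so that $A(f)(z)=N(\mathcal{H},X;z)$ is a nonnegative counting function which is uniformly bounded (in the cofinite case this uses Lemma \ref{afinl2}); then $\int |A(f)|^2\,d\mu \le M \int A(f)\,d\mu \ll T^{-1}$, and Bessel's inequality combined with $c(f,t_j)=2\hat{u}_j\,d(f,t_j)$ and the lower bound $d(f,t)\gg T^{-1}$ for $|t_j|\le T$ (coming from Huber's estimate on $\xi_\lambda$) gives $\sum_{t_j\le T}|\hat{u}_j|^2 \, T^{-2} \ll T^{-1}$, i.e.\ the claim. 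So where you use kernel positivity ($k=k_0*k_0$, $h=h_0^2\ge 0$ across the exceptional spectrum) plus a geometric-side analysis of translates of $\sigma$, Huber uses positivity and $L^\infty$-boundedness of the counting function plus a lower bound on the Huber transform, and no geometry of $\gamma\sigma$ versus $\sigma$ enters at all. Your approach buys more: it localizes to windows $T\le t_j\le T+1$ and is the natural starting point for the asymptotic law of Good and Tsuzuki cited after the lemma; Huber's approach is softer and shorter given the machinery already in place. One small correction to your bookkeeping: the claim that every off-diagonal term is $O(1)$ can fail when $\Gamma$ contains an element mapping the axis of $g$ to itself with reversed orientation (e.g.\ an elliptic involution with fixed point on the axis), in which case $\gamma\sigma$ overlaps $\sigma$ in a segment of positive length and that term contributes $O(T^2\cdot 1\cdot T^{-1})=O(T)$; this is harmless for the final bound, since the diagonal is already of size $T$, but the statement ``each such term is $O(1)$'' should be weakened to ``the off-diagonal total is $O(T)$.''
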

The exact asymptotic behavior was first proved by Good \cite[Theorem 2, p. 108]{good}, see also \cite{martin} and in bigger generality by Tsuzuki \cite[Theorem 1, p.~2]{tsuzuki}.

\subsection{Special functions and test functions}

For the proof of Theorem \ref{theorem1} it is crucial to identify the special function $\xi_\lambda(v)$ and its relevant properties. Using \cite[p.~185, eq.~(87)]{fay}, \cite[p.~111, eq.~(10), (12)]{bateman} and \cite[p.~1009, eq.~(9.132.2)]{gradry} we see that the general solution of equation $(\ref{huberresult})$ can be written in the form
\begin{displaymath}
\xi_{\lambda}(v) = a(s) F\left(s,1-s,1; \frac{1 - i \tan(v)}{2}\right) + b(s)  F\left(s,1-s,1; \frac{1 + i \tan(v)}{2} \right),
\end{displaymath}
where $F(a, b, c ; z)$ is the Gauss hypergeometric function. The initial conditions of Lemma \ref{lemmahuber} imply that 
$$a(s) = b(s) = \left( 2\cdot F(s,1-s,1; 1/2) \right)^{-1}.$$
Using \cite[p.~959, eq.~(8.702)]{gradry} and \cite[p.~104, eq.~(50)]{bateman} we can write $\xi_{\lambda}(v)$ as
$$\xi_{\lambda}(v)= (2 \sqrt{\pi})^{-1} \Gamma \left(\frac{s+1}{2} \right) \Gamma \left(1 - \frac{s}{2} \right) \left( P_{s-1}^0 \left(i \tan v \right) + P_{s-1}^0 \left(-i \tan v\right) \right),$$
where $P_{\nu}^{\mu}(z)$ is the associated Legendre function of the first kind. Using the change of variable $x=\tan(v)$, we get
$$d(f, t) =  (2 \sqrt{\pi})^{-1} \Gamma \left(\frac{s+1}{2} \right) \Gamma \left(1 - \frac{s}{2} \right) \int_{0}^{\infty} f(x^2+1) \left( P_{s-1}^0 (i x) + P_{s-1}^0 (-i x) \right) dx.$$
Huber's interpretation shows that we are counting $\g \in \mathcal{H}$ such that $(\cos v)^{-1} \leq X$, i.e. $x^2+1\leq X^2$. Hence, choosing 
$$f(x^2+1) = \left\{ \begin{array}{rcl}
1, & \mbox{for} & x \leq \sqrt{X^2-1}, 
\\ 0, & \mbox{for} & x > \sqrt{X^2-1},
\end{array} \right. $$
we get
$$A(f)(z) = N(\mathcal{H}, X ;z).$$
Let us set 
\begin{equation}\label{uversusx}
U = \sqrt{X^2 - 1}.
\end{equation}
Motivated by \cite[p.~269]{bruin} we  define the following test functions for $x>0$ and $0 < U/2< T < U < V < 2U$:
\begin{equation}\label{fplus}
f^+(x^2 +1) = \left\{ \begin{array}{lcl}
1, & \mbox{for} & x \leq U, 
\\ \displaystyle \frac{V-x}{V-U}, & \mbox{for} & U \leq x \leq V,
\\ 0, & \mbox{for} & V \leq x,
\end{array} \right. 
\end{equation}
\begin{equation}\label{fminus}
f^-(x^2 + 1) = \left\{ \begin{array}{lcl}
1, & \mbox{for} & x \leq T,
\\ \displaystyle\frac{U-x}{U-T}, & \mbox{for} & T \leq x \leq U,
\\ 0, & \mbox{for} & U \leq x.
\end{array} \right. 
\end{equation}
Denote $Y = V-U$. Notice that 
$$f(x^2 + 1) = \left\{ \begin{array}{rcl}
1, & \mbox{for} & x \leq U,
\\ 0, & \mbox{for} & U < x,
\end{array} \right. $$
hence $f^- \leq f \leq f^+$. This gives 
$$A(f^-)(z) \leq N(\mathcal{H}, X ; z) \leq A(f^+)(z).$$
Since $U=X+O(X^{-1})$ as $U, X \to \infty$, we can translate  estimates involving $X$ to ones with  $U$ and vice versa. 
 We compute $d(f^+, t)$ and $d(f^-, t)$. The analysis for $d(f^-, t) $  is similar to the one for $d(f^+, t)$ with  $U$ and $T$ instead of $V$ and $U$. Therefore, we consider only $d(f^+, t)$.

For an $A>0$, we define $I(A)$ and $J(A)$ by
$$I(A) = \int_{0}^{A} \left( P_{s-1}^0 (i x) + P_{s-1}^0 (-i x) \right) (A-x) dx,$$
$$J(A) = (A^2 +1) \left( P_{s-1}^{-2}(iA) + P_{s-1}^{-2}(-iA) \right).$$
Then, it is easy to see that
$$d(f^+, t) =   (2 \sqrt{\pi})^{-1} \Gamma \left(\frac{s+1}{2} \right) \Gamma \left(1 - \frac{s}{2} \right) \cdot \frac{ I(V) - I(U)}{V-U}.$$
\begin{lemma}\label{integrals}The functions $I(A)$ and $J(A)$ satisfy the relation
$$I(A) = J(A) -2P_{s-1}^{-2}(0).$$
\end{lemma}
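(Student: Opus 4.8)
The cleanest route is to recognize that both sides of the claimed identity are determined by the same second-order data. Write $g(x)=P_{s-1}^0(ix)+P_{s-1}^0(-ix)$, so that $I(A)=\int_0^A g(x)(A-x)\,dx$. Differentiating under the integral sign (Leibniz) gives $I'(A)=\int_0^A g(x)\,dx$ and hence $I''(A)=g(A)$, with $I(0)=I'(0)=0$. Thus $I$ is \emph{the} solution of the initial value problem $y''=g$, $y(0)=y'(0)=0$. My plan is to show that $\widetilde J(A):=J(A)-2P_{s-1}^{-2}(0)$ solves exactly the same problem; then integrating $y''=g$ twice from $0$ forces $\widetilde J=I$, which is the assertion of Lemma~\ref{integrals}.

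\textbf{Initial conditions for $J$.} Set $w(A)=P_{s-1}^{-2}(iA)+P_{s-1}^{-2}(-iA)$, so $J(A)=(A^2+1)w(A)$. Writing $u(A)=P_{s-1}^{-2}(iA)$ we have $w(A)=u(A)+u(-A)$, so $w$ is even; consequently $w'$ is odd and $w'(0)=0$. This immediately yields $J(0)=w(0)=2P_{s-1}^{-2}(0)$, so $\widetilde J(0)=0$, and $J'(A)=2Aw(A)+(A^2+1)w'(A)$ gives $J'(0)=w'(0)=0=\widetilde J'(0)$. Both initial conditions match those of $I$ with no extra input beyond the evenness in $A$.

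\textbf{The heart of the matter: $J''=g$.} Here I would use that $F=P_{s-1}^{-2}$ satisfies the associated Legendre equation with $\nu(\nu+1)=(s-1)s$ and $\mu^2=4$; at the arguments $z=\pm iA$ one has $1-z^2=A^2+1$. Substituting $u''(A)=-F''(iA)$ and the analogous relation for $u(-A)$, the Legendre equation eliminates the second derivatives and gives the single identity $(A^2+1)w''=-2Aw'+\bigl[(s-1)s-\tfrac{4}{A^2+1}\bigr]w$. Feeding this into $J''=2w+4Aw'+(A^2+1)w''$ produces, after cancellation, the clean intermediate formula
\[
J''(A)=\Bigl[(s-1)s+2-\tfrac{4}{A^2+1}\Bigr]w(A)+2A\,w'(A).
\]
It then remains to identify the right-hand side with $g(A)=P_{s-1}^0(iA)+P_{s-1}^0(-iA)$. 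For this I would invoke the contiguous (order-raising) relations for the associated Legendre functions from \cite{bateman}, \cite{gradry}: the three-term recurrence in the order $\mu$ expresses $P_{s-1}^0$ through $P_{s-1}^{-1}$ and $P_{s-1}^{-2}$, while a first-order derivative relation expresses $w'$ through $w$ and the combination $P_{s-1}^{-1}(iA)-P_{s-1}^{-1}(-iA)$. Substituting these and collecting terms should collapse the displayed expression precisely to $g(A)$.

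\textbf{Main obstacle.} The genuine difficulty is bookkeeping with conventions rather than ideas: one must fix a consistent branch of $(z^2-1)^{1/2}$ at $z=\pm iA$ (where $z^2-1=-(A^2+1)$, so this factor is purely imaginary) and track the exact normalizing constants in the ladder relations, since a single misplaced sign or factor destroys the cancellation. The symmetrization over $\pm iA$ is what makes this work: the odd branch factors $z/(z^2-1)^{1/2}$ force the $P_{s-1}^{-1}$-terms to enter through the antisymmetric difference, so that $J''$ remains an even, single-valued function matching $g$, and the spurious $1/(A^2+1)$ contributions cancel. Once the correct forms of the two recurrences are pinned down against \cite{bateman}, \cite{gradry}, the verification of $J''=g$ is routine, and the uniqueness argument of the first paragraph completes the proof.
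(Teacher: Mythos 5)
Your proposal is correct in substance but runs the computation in the opposite direction from the paper's, so it is a genuinely different (if closely related) route. The paper \emph{integrates}: after one integration by parts it applies the ladder identity \cite[p.~968, eq.~8.752.3]{gradry}, which exhibits $(x^2+1)\bigl(P_{s-1}^{-2}(ix)+P_{s-1}^{-2}(-ix)\bigr)$ directly as a second antiderivative of $P_{s-1}^0(ix)+P_{s-1}^0(-ix)$, the branch issue being disposed of by the single observation that $(z^2-1)^{1/2}P_{s-1}^{-1}(z)$ is single-valued in the disk with center $(1,0)$ and radius $2$; the constant $-2P_{s-1}^{-2}(0)$ then comes from the lower limit of the evaluation. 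You instead \emph{differentiate}: your IVP framing is sound, the initial conditions for $I$ and $\widetilde J$ are verified correctly (the evenness argument for $w$ is exactly right, and it makes the constant appear transparently as $J(0)$, which in the paper is only implicit), and your intermediate formula $J''(A)=\bigl[s(s-1)+2-\tfrac{4}{A^2+1}\bigr]w(A)+2Aw'(A)$ is correct — I checked both it and the final identity $J''=g$ (e.g.\ at $s=1$, where $P_0^{-2}(z)=\tfrac{z-1}{2(z+1)}$ gives $J(A)=A^2-1$ and $I(A)=A^2$). So nothing in your plan fails; what it buys is a clean uniqueness argument, and what it costs is that you need two separate contiguous relations and hence two rounds of branch bookkeeping, where the paper needs one ladder relation used twice. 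One concrete caution on the step you left as "routine": if you evaluate the derivative relation at $z=\pm iA$ with independently chosen principal branches of $(z^2-1)^{1/2}$, you are led to the \emph{symmetric} sum $P_{s-1}^{-1}(iA)+P_{s-1}^{-1}(-iA)$ in $w'$, which would force $w'(0)=2P_{s-1}^{-1}(0)\neq 0$ and contradict the evenness of $w$; the antisymmetric difference you predict is correct precisely when the branch is fixed by continuing the single-valued product $(z^2-1)^{1/2}P_{s-1}^{-1}(z)$, i.e.\ by the same device the paper invokes. Indeed the quickest way to finish your argument is to use \cite[p.~968, eq.~8.752.3]{gradry} itself: writing $J(A)=-\bigl[(z^2-1)P_{s-1}^{-2}(z)\bigr]_{z=iA}-\bigl[(z^2-1)P_{s-1}^{-2}(z)\bigr]_{z=-iA}$ and differentiating twice in $A$ through the ladder yields $J''=g$ in two lines — which is the paper's proof read backwards.
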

\begin{proof}
Using integration by parts, the formula \cite[p.~968, eq.~8.752.3]{gradry}, and the fact that the function $(z^2-1)^{1/2} P_{s-1}^{-1}(z)$ is single-valued in the disk with center $(1,0)$ and radius $2$, we get
$$I(A) =  -i \int_{0}^{A} (-x^2-1)^{1/2} \left( P_{s-1}^{-1}(ix) -  P_{s-1}^{-1}(-ix) \right) dx.$$
Using again twice \cite[p.~968, eq.~8.752.3]{gradry} for $m=1, 2$ we get 
$$I(A) = \left. (x^2+1) \Big( P_{s-1}^{-2}(ix) +  P_{s-1}^{-2}(-ix) \Big) \right|_0^A.$$
The result is immediate.
\end{proof}

\subsection{Estimates for the Huber transform}

Lemma \ref{integrals} implies that
\begin{equation} \label{coeff1}
d(f^+, t) = (2 \sqrt{\pi})^{-1} \Gamma \left(\frac{s+1}{2} \right) \Gamma \left(1 - \frac{s}{2} \right) \cdot \frac{ J(V) - J(U)}{V-U}.
\end{equation}
Relation \cite[p.~971, eq.~8.776.1]{gradry} implies 
\begin{eqnarray}\label{dft}
d(f^+, t) &=& B(s) \cdot \frac{ (V^2+1)V^{s-1} - (U^2 + 1) U^{s-1}}{V-U} \cdot \left(1+ O(U^{-2}) \right) \\ \nonumber
&+& D(s)  \cdot \frac{ (V^2+1)V^{-s} - (U^2 + 1) U^{-s}}{V-U} \cdot \left(1+ O(U^{-2}) \right), \\ \nonumber
\end{eqnarray}
where
\begin{equation} \label{betacoeff}
B(s) =  2^{s-2} \left( e^{i \frac{\pi}{2}(s-1)} + e^{-i \frac{\pi}{2}(s-1)} \right) \frac{\Gamma \left(\frac{s+1}{2} \right) \Gamma \left(1 - \frac{s}{2} \right) \Gamma \left(s-\frac{1}{2} \right) }{ {{\pi}}\Gamma(s+2)},
\end{equation}
\begin{equation} \label{deltacoeff}
D(s) = \left( e^{i \frac{\pi}{2}(-s)} + e^{-i \frac{\pi}{2}(-s)} \right) \frac{\Gamma \left(\frac{s+1}{2} \right) \Gamma \left(1 - \frac{s}{2} \right) \Gamma \left(\frac{1}{2} -s \right)}{{{\pi}}\Gamma(3-s) 2^{s+1}}.
\end{equation}

\begin{proposition}\label{estimates1} a) For any $s=1/2+it$ we have 
\begin{eqnarray*}
d(f^+,t) &=& B \left(\frac{1}{2}+it\right)  \left(\frac{3}{2}+it \right) X^{1/2+it} + D \left(\frac{1}{2}+it \right) \left(\frac{3}{2}-it \right)  X^{1/2-it} \\
&+& O \left( B \left(\frac{1}{2}+it\right) |t|^{2} X^{-1/2+it} Y + D \left(\frac{1}{2}+it \right) |t|^{2}  X^{-1/2-it} Y \right)
\end{eqnarray*}
b) Let $t \in \mathbb{R}$ (i.e $\Re(s) = 1/2$) and $t \neq 0$.  Then, $d(f^+,t)$ can be written in the form 
$$d(f^+, t) = a(t, Y/X) X^{1/2+it} + b(t, Y/X) X^{1/2-it},$$
where the coefficients $a(t,Y/X)$ and $b(t, Y/X)$ satisfy the bound
$$a(t,Y/X), b(t, Y/X) =O \left(t^{-2} \min \{t, X Y^{-1} \}\right).$$
Hence
$$d(f^+, t) = O \left(t^{-2} \min \{t, X Y^{-1} \} X^{1/2}\right).$$
c) Let $t \notin \mathbb{R}$, i.e $s \in (1/2, 1]$. Then
\begin{eqnarray*}
d(f^+,t) &=& B(s) (s+1) X^s + D(s) (2-s) X^{1-s}\\
 &&+ O(\G(s-1/2)Y+ \G(1/2-s) X^{1/2}).
\end{eqnarray*}
d) For $t=0$ we get
$$d(f^+, 0) = O(X^{1/2} \log X).$$
\end{proposition}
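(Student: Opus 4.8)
The four parts all start from the exact identity \eqref{dft}, which writes $d(f^+,t)$ as $B(s)$ times the difference quotient of $g(x)=(x^2+1)x^{s-1}=x^{s+1}+x^{s-1}$ between $U$ and $V=U+Y$, plus $D(s)$ times the difference quotient of $h(x)=(x^2+1)x^{-s}=x^{2-s}+x^{-s}$, each multiplied by $(1+O(U^{-2}))$. For part (a) the plan is simply to Taylor-expand each difference quotient to first order about $x=U$. Writing $\tfrac{g(V)-g(U)}{V-U}=g'(U)+\tfrac12 g''(\eta)(V-U)$ with $\eta\in(U,V)$ and $|g''(\eta)|\ll |s|^2\eta^{\Re s-1}$, the leading term is $(s+1)U^s$ and the remainder is $O(|s|^2 U^{s-1}Y)$; replacing $U^s$ by $X^s=X^{1/2+it}$ is legitimate since $U=X+O(X^{-1})$ by \eqref{uversusx}. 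Doing the same for the $D(s)$-piece (leading term $(2-s)U^{1-s}$) produces the two stated main terms, and the $(1+O(U^{-2}))$ factors together with the sub-leading $x^{s-1},x^{-s}$ difference quotients are absorbed into the error $O(B(s)|t|^2X^{-1/2+it}Y+D(s)|t|^2X^{-1/2-it}Y)$. No properties of $B,D$ beyond their role as prefactors are needed here.

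For part (b) I would first isolate the coefficients: every term carried by $B(s)$ has exponent with imaginary part $+t$ and every term carried by $D(s)$ has imaginary part $-t$, so after factoring out $X^{\pm it}$ the expression \eqref{dft} genuinely has the shape $a(t,Y/X)X^{1/2+it}+b(t,Y/X)X^{1/2-it}$. Two ingredients then give the bound. The first is a Stirling estimate for $B(1/2+it)$ and $D(1/2+it)$ from \eqref{betacoeff}, \eqref{deltacoeff}: the trigonometric factor $e^{i\pi(s-1)/2}+e^{-i\pi(s-1)/2}$ grows like $e^{\pi|t|/2}$, while the quotient of Gamma functions decays like $|t|^{-2}e^{-\pi|t|/2}$, so the exponentials cancel exactly and $B,D\ll |t|^{-2}$. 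The second is to bound the difference quotient in two ways: trivially by the sizes of its terms, $\ll U^{3/2}/Y$, which after dividing by $X^{1/2}$ gives a coefficient $\ll XY^{-1}$; and through the derivative, integrating $g'$ over $[U,V]$ with $|g'|\ll |t|x^{1/2}$, which gives a coefficient $\ll |t|$. Taking the smaller of the two yields $a,b\ll t^{-2}\min\{t,XY^{-1}\}$, and the final display follows from $|d(f^+,t)|\le(|a|+|b|)X^{1/2}$.

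For part (c), with $s$ real in $(1/2,1]$ there is no oscillation, so I would again Taylor-expand the two difference quotients as in (a), but now keep $X^s$ and $X^{1-s}$ as genuinely distinct main terms ($s>1/2>1-s$). The point is to control the first-order remainders through the Gamma-factors: near $s=1/2$, $B(s)$ is governed by its factor $\Gamma(s-1/2)$ and $D(s)$ by $\Gamma(1/2-s)$, both of which blow up, while the remaining Gamma-factors in \eqref{betacoeff}, \eqref{deltacoeff} stay bounded on $(1/2,1]$. The $B(s)$-remainder is $\ll\Gamma(s-1/2)U^{s-1}Y\ll\Gamma(s-1/2)Y$ because $U^{s-1}\le 1$, and the $D(s)$-remainder is $\ll\Gamma(1/2-s)U^{-s}Y\ll\Gamma(1/2-s)X^{1/2}$ because $U^{-s}\ll X^{-1/2}$ and $Y\ll X$; together these give the stated error.

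Part (d) is the confluent case $t=0$, i.e.\ $s=1/2$, where both $\Gamma(s-1/2)$ and $\Gamma(1/2-s)$ have a pole and the two main terms of (c) individually diverge. The plan is to take the limit $s\to 1/2$ of the full expression and show the poles cancel: since $\Gamma(s-1/2)\sim(s-1/2)^{-1}$ and $\Gamma(1/2-s)\sim-(s-1/2)^{-1}$, a short computation from \eqref{betacoeff}, \eqref{deltacoeff} shows the residues of $B(s)(s+1)X^s$ and $D(s)(2-s)X^{1-s}$ at $s=1/2$ are equal and opposite, so the $1/(s-1/2)$ poles in their sum cancel and a $0/0$ remains; evaluating it by L'Hopital brings down $\tfrac{d}{ds}X^s=X^s\log X$, and this confluence of the frequencies $X^s,X^{1-s}$ is the source of the extra $\log X$, giving $d(f^+,0)\ll X^{1/2}\log X$. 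I expect the two delicate steps to be the exponential cancellation in the Stirling estimate of (b), which is precisely what makes the coefficients decay polynomially and underlies the oscillation later exploited by the large sieve, and the verification in (d) that the residues match so that only the logarithmic term survives. An alternative route for (d) is to evaluate $d(f^+,0)$ directly from \eqref{coefficients} with $\lambda=1/4$ using the explicit solution of \eqref{huberresult}, bypassing the pole cancellation entirely.
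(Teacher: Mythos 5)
Your parts (a)--(c) are correct and essentially reproduce the paper's own proof: the mean value theorem (your Taylor expansion with second-derivative remainder is the same device) applied to the two difference quotients of $x^{s+1}+x^{s-1}$ and $x^{2-s}+x^{-s}$ in \eqref{dft}; the definition of $a(t,Y/X)$ and $b(t,Y/X)$ by factoring $X^{\pm it}$ out of the $B(s)$- and $D(s)$-pieces; the two complementary bounds $\ll |t|X^{1/2}$ (via the derivative) and $\ll X^{3/2}Y^{-1}$ (trivially), combined with the Stirling estimate in which the growth $e^{\pi|t|/2}$ of the factor $e^{i\pi(s-1)/2}+e^{-i\pi(s-1)/2}$ cancels exactly against the decay of the Gamma-quotient to give $B,D\ll |t|^{-2}$; and, in (c), retaining $\Gamma(s-1/2)$ and $\Gamma(1/2-s)$ while bounding the remaining Gamma-factors on $(1/2,1]$. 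All of this matches the paper step for step.

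In part (d), however, your primary route has a genuine gap. You propose to let $s\to 1/2$ in the expansion and invoke cancellation of the $\Gamma(s-1/2)$ and $\Gamma(1/2-s)$ poles plus L'Hopital. But the object you would pass to the limit is not an exact identity: \eqref{dft} rests on the large-argument asymptotics \cite[8.776.1]{gradry}, which is a fixed-$s$ expansion that degenerates precisely at $s=1/2$, where the two exponents $s-1$ and $-s$ coincide and the true behaviour of $P_{-1/2}^{-2}(iz)$ acquires a $\log z$; correspondingly, the error term $O(\Gamma(s-1/2)Y+\Gamma(1/2-s)X^{1/2})$ in your part (c) diverges as $s\to 1/2$. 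Verifying that the residues of the two \emph{main} terms cancel says nothing about the limit of these divergent error terms, so the confluence argument requires an expansion with remainder uniform in $s$ near $1/2$, which you have not supplied. The paper sidesteps this entirely: it sets $t=0$ in the exact identity \eqref{coeff1}, applies the mean value theorem to $H(z)=(z^2+1)\left(P_{-1/2}^{-2}(iz)+P_{-1/2}^{-2}(-iz)\right)$, and bounds $H'(\xi)$ for $\xi\in[U,V]$ using the derivative relation \cite[8.731.1]{gradry} and the integral representation \cite[8.713.2]{gradry}; the logarithm then arises concretely from $\int_1^{\xi}(u^2-1)^{-1/2}\,du\ll\log\xi$, giving $H'(\xi)\ll\xi^{1/2}\log\xi$ and hence $d(f^+,0)=O(X^{1/2}\log X)$. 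Your fallback remark --- evaluating $d(f^+,0)$ directly from \eqref{coefficients} with $\lambda=1/4$ via the explicit solution of \eqref{huberresult} --- is in spirit exactly the paper's route, since $\xi_{1/4}$ is expressed through $P_{-1/2}$, but as stated it is only a pointer: the substantive work is precisely the Legendre-function estimates just described, so to close part (d) you should either carry those out or establish a uniform-in-$s$ remainder for the confluence argument.
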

\begin{proof}
a) First, apply the mean value theorem to the function $f(x) = x^{s+1} + x^{s-1}$ to get
$$ \frac{ (V^2+1)V^{s-1} - (U^2 + 1) U^{s-1}}{V-U} = (s+1) X^s + O( s (s+1) X^{s-1} Y + (s-2) X^{-1}).$$
Applying it again to the function  $g(x) = x^{2-s} +  x^{-s}$, we have
$$ \frac{ (V^2+1)V^{-s} - (U^2 + 1) U^{-s}}{V-U} = (2-s) X^{1-s} + O((2-s) (1-s) X^{-s} Y + (-s) X^{-3/2}).$$
Plugging $s=1/2+it$ in (\ref{dft}) and using that $O(U^{-2}) = O(X^{-2})$ and the above estimates, we get the result.
\\
b) First, consider the function $f(x)$ as above. We know from part $a)$ that the terms containing $X^{1/2+it}$ come from the terms contaning $f(x)$. The mean value theorem imples
$$ \frac{ (V^2+1)V^{s-1} - (U^2 + 1) U^{s-1}}{V-U} \ll |t| \cdot X^{1/2},$$
whereas, trivial estimates imply
$$ \frac{ (V^2+1)V^{s-1} - (U^2 + 1) U^{s-1}}{V-U} \ll X^{3/2} Y^{-1}.$$
Hence, if we set
$$a(t, Y/X) = \left(1+ O(U^{-2}) \right)  B(s) \cdot \frac{ (V^2+1)V^{s-1} - (U^2 + 1) U^{s-1}}{V-U}  X^{-(1/2+it)},$$
and use the Stirling's formula for the $\Gamma$ function, we get the bound
$$a(t,Y/X) =O \left(t^{-2} \min \{t, X Y^{-1} \}\right).$$
Doing the same for $g(x)$ as above and the coefficient $b(t, Y/X)$ defined as
$$b(t, Y/X) = \left(1+ O(U^{-2}) \right)  D(s) \cdot \frac{ (V^2+1)V^{-s} - (U^2 + 1) U^{-s}}{V-U}  X^{-(1/2-it)},$$
we get $b)$.
\\
c) It follows from $a)$. We estimate three of the $\G$-factors in $B(s)$, $D(s)$ (eq. (\ref{betacoeff}), (\ref{deltacoeff})) and keep the factors $\Gamma (s-1/2)$ and  $\Gamma (1/2-s)$ accordingly.
\\
d) Putting $t=0$ in (\ref{coeff1}) we get
\begin{eqnarray*}
d(f^+,0) = (2 \sqrt{\pi} )^{-1} \Gamma^2 (3/4) \frac{ H(V) - H(U) }{V-U}
\end{eqnarray*}
where $H(z) =  (z^2 +1) \left( P_{-1/2}^{-2}(iz) + P_{-1/2}^{-2}(-iz) \right)$. Thus, applying once again the mean value theorem, there exists a $\xi \in [U, V]$ such that
\begin{eqnarray*}
d(f^+,0) = (2 \sqrt{\pi} )^{-1} \Gamma^2 (3/4) H'(\xi).
\end{eqnarray*}
For $H'(z)$ we have
$$
H'(z) =  2z \left( P_{-1/2}^{-2}(iz) + P_{-1/2}^{-2}(-iz) \right) + (z^2 + 1) \frac{d}{dz} \left(  P_{-1/2}^{-2}(iz) + P_{-1/2}^{-2}(-iz) \right).
$$
Formula \cite[p.~964, eq.~(8.731.1)]{gradry} implies
\begin{equation} \label{hprime}
H'(z) =  \frac{3z}{2} \left( P_{-1/2}^{-2}(iz) - P_{-1/2}^{-2}(-iz) \right) -\frac{5i}{2} \left(  P_{1/2}^{-2}(iz) - P_{1/2}^{-2}(-iz) \right).
\end{equation}
Consider the first bracket. Using formula \cite[p.~961, eq.~(8.713.2)]{gradry} we get
\begin{eqnarray*}
P_{-1/2}^{-2}(i\xi) - P_{-1/2}^{-2}(-i\xi) &\ll& (\xi^2+1) \int_{0}^{\infty} \left(\cosh^2 t + \xi^2 \right)^{-5/4} dt \\
&\ll& \xi^{-1/2} \int_{0}^{\infty} \left( \left(\frac{\cosh t}{\xi}\right)^2 + 1 \right)^{-5/4} dt.
\end{eqnarray*}
Setting $x = \cosh t / \xi$ we get
\begin{eqnarray*}
\int_{0}^{\infty} \left( \left(\frac{\cosh t}{\xi}\right)^2 + 1 \right)^{-5/4} dt &=&  \int_{1/\xi}^{\infty} \left(x^2 + 1 \right)^{-5/4} 
\frac{\xi}{(\xi^2x^2 -1 )^{1/2}} dx \\
&=& \int_{1/\xi}^{1} \left(x^2 + 1 \right)^{-5/4} \frac{\xi}{(\xi^2x^2-1)^{1/2}} dx \\
&&+ \int_{1}^{\infty} \left(x^2 + 1 \right)^{-5/4} 
\frac{\xi}{(\xi^2x^2 -1)^{1/2}} dx.
\end{eqnarray*}
Since $U, V \to \infty$, we can assume that $\xi \geq 2$. We see that
$$ \int_{1}^{\infty} \left(x^2 + 1 \right)^{-5/4} \frac{\xi}{(\xi^2x^2-1)^{1/2}} dx \ll  \int_{1}^{\infty} \left(x^2 + 1 \right)^{-5/4} dx = O(1)$$
and, after setting $u=x \xi$,
\begin{eqnarray*}
 \int_{1/\xi}^{1} \left(x^2 + 1 \right)^{-5/4} \frac{\xi}{(\xi^2x^2-1)^{1/2}} dx &=& \int_{1}^{\xi} \left(\frac{\xi^2}{u^2+\xi^2}\right)^{5/4} \frac{\xi}{(u^2-1)^{1/2}} \frac{du}{\xi} \\
&\leq&  \int_{1}^{\xi} \frac{1}{\sqrt{u^2-1}} du \ll \log \xi.
\end{eqnarray*}
Combining these estimates we get
$$P_{-1/2}^{-2}(i\xi) + P_{-1/2}^{-2}(-i\xi) \ll \xi^{-1/2} \log \xi.$$
For the second bracket, using once again  \cite[p.~961, eq.~(8.713.2)]{gradry}, we get
\begin{eqnarray*}
 P_{1/2}^{-2}(i\xi) - P_{1/2}^{-2}(-i\xi) &\ll& (\xi^2+1) \int_{0}^{\infty} \cosh t \left(\cosh^2 t + \xi^2 \right)^{-5/4} dt \\
&\ll& \xi^{1/2} \int_{0}^{\infty} \frac{\cosh t}{\xi}  \left( \left(\frac{\cosh t}{\xi}\right)^2 + 1 \right)^{-5/4} dt.
\end{eqnarray*}
As above, set $x = \cosh t / \xi$ and split the integral into two integrals:
\begin{eqnarray*}
\int_{1/\xi}^{1} \left(x^2 + 1 \right)^{-5/4} \frac{\xi x}{(\xi^2x^2-1)^{1/2}} dx
+ \int_{1}^{\infty} \left(x^2 + 1 \right)^{-5/4} 
\frac{\xi x}{(\xi^2x^2 -1)^{1/2}} dx.
\end{eqnarray*}
As above, assuming $\xi \geq 2$, the second integral is easily seen to converge, whereas the first one, setting $u =x \xi$ is again bound by  $\int_{1}^{\xi} (u^2-1)^{-1/2} du$. Finally, combining all the above estimates, we get
$$d(f^+,0) \ll H'(\xi) \ll \xi^{1/2} \log \xi \ll V^{1/2} \log V,$$
which implies the desired bound, since $X \sim U$ and $V<2U$.
\end{proof}

\section{The cocompact case}\label{cocompactcase}
We can now prove Theorem \ref{theorem1} when $\Gamma$ is cocompact:
\begin{theorem} \label{theorempart1}Let $\Gamma$ be a cocompact Fuchsian group, and $\mathcal{H}$ a hyperbolic conjugacy class of $\Gamma$. Then the error term $E(\mathcal{H}, X;z)$ satisfies the bound
$$E(\mathcal{H}, X; z) = O(X^{2/3}).$$
\end{theorem}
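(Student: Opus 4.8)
The plan is to sandwich $N(\mathcal{H}, X; z)$ between $A(f^-)(z)$ and $A(f^+)(z)$ and to control each of these through its spectral expansion. By Lemma~\ref{lemmahuber} we have $A(f^{\pm})(z) = \sum_j 2\hat{u}_j\, d(f^{\pm}, t_j)\, u_j(z)$, and since $\Gamma$ is cocompact the spectrum is purely discrete. I would split the spectral sum into three parts: the finitely many small eigenvalues ($s_j \in (1/2, 1]$, i.e. $t_j$ purely imaginary), the possible exceptional contribution from $t_j = 0$, and the tail over the real $t_j$. For the small eigenvalues, Proposition~\ref{estimates1}(c) gives $d(f^+, t_j) = B(s_j)(s_j+1)X^{s_j} + D(s_j)(2-s_j)X^{1-s_j} + O(\Gamma(s_j-1/2)Y + \Gamma(1/2-s_j)X^{1/2})$; using the identity $2B(s)(s+1) = A(s)$, which follows at once on comparing \eqref{a-function} with \eqref{betacoeff} via $\Gamma(s+2)=(s+1)\Gamma(s+1)$, the leading terms assemble into exactly $M(\mathcal{H}, X; z)$, while the $D(s_j)X^{1-s_j}$ contributions satisfy $X^{1-s_j}\le X^{1/2}$ and are absorbed into the error together with the $O(Y + X^{1/2})$ remainder (there being finitely many such terms). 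The $t_j = 0$ term, if present, contributes $O(X^{1/2}\log X)$ by Proposition~\ref{estimates1}(d).

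The heart of the argument is the tail $S = \sum_{t_j} 2\hat{u}_j\, d(f^+, t_j)\, u_j(z)$ over the genuine real spectral parameters (the finitely many with $0<t_j\le 1$ contribute only $O(X^{1/2})$ by part (b) and are harmless). Here I would use the oscillatory bound of Proposition~\ref{estimates1}(b), namely $d(f^+, t_j) \ll t_j^{-2}\min\{t_j,\, XY^{-1}\}X^{1/2}$, so that $S \ll X^{1/2}\sum_{t_j\ge 1}|\hat{u}_j|\,|u_j(z)|\,t_j^{-2}\min\{t_j, XY^{-1}\}$. Decomposing into dyadic blocks $T \le t_j < 2T$ and applying Cauchy--Schwarz, Lemma~\ref{lemmahuber2} gives $\sum_{T\le t_j<2T}|\hat{u}_j|^2 \ll T$, while the local Weyl law gives $\sum_{t_j\le T}|u_j(z)|^2 \ll T^2$; hence each block satisfies $\sum_{T\le t_j<2T}|\hat{u}_j|\,|u_j(z)| \ll T^{3/2}$. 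The contribution of a block is therefore $\ll T^{-1/2}\min\{T, XY^{-1}\}$, and summing the geometric series over dyadic $T$ (splitting at $T \asymp XY^{-1}$) yields $S \ll X^{1/2}(XY^{-1})^{1/2} = X Y^{-1/2}$. It is precisely the smoothing width $Y = V-U$ that supplies the decay factor $\min\{t_j, XY^{-1}\}$ making this series converge absolutely, which is why the piecewise-linear $f^{\pm}$, rather than the sharp cutoff $f$, must be used; this absolute convergence also justifies the pointwise validity of the spectral expansion.

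Collecting the three contributions gives $A(f^+)(z) - M(\mathcal{H}, X; z) \ll X Y^{-1/2} + Y + X^{1/2}\log X$, and the analogous bound holds from below for $A(f^-)(z)$ with the width $U-T$ in place of $V-U$. Balancing $XY^{-1/2}$ against $Y$ forces the choice $Y = X^{2/3}$ (legitimate, since then $Y < U \asymp X$ as required by \eqref{fplus}), making both terms $X^{2/3}$ and dominating $X^{1/2}\log X$. Since $A(f^-)(z) \le N(\mathcal{H}, X; z) \le A(f^+)(z)$, this yields $E(\mathcal{H}, X; z) = O(X^{2/3})$. The main obstacle I anticipate is the tail estimate of the second paragraph: one must marry the oscillatory bound (b) with the Weyl-type averages to make the dyadic sum converge, and then carry out the optimization of $Y$, which is the single step that pins down the exponent $2/3$.
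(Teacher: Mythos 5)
Your proposal is correct and follows essentially the same route as the paper's proof: the same sandwich $A(f^-)\le N\le A(f^+)$, the same three-way spectral split handled by Proposition \ref{estimates1} with the identity $A(s)=2B(s)(s+1)$, the same dyadic decomposition of the tail combining the bound $d(f^+,t_j)\ll t_j^{-2}\min\{t_j, XY^{-1}\}X^{1/2}$ with Cauchy--Schwarz, Lemma \ref{lemmahuber2}, and the local Weyl law, and the same optimization $Y=X^{2/3}$. No gaps to report.
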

\begin{proof}
We begin with the spectral expansion of $A(f^+)$:
\begin{eqnarray*}
A(f^+) (z) &=&  \sum_{j} c(f^+, t_j) u_j(z) =  \sum_{j} 2  d(f^+, t_j) \hat{u}_j u_j(z) 
\end{eqnarray*}
Using  Proposition \ref{estimates1}, we write it in the form
\begin{eqnarray*}
A(f^+) (z) &=&  \sum_{1/2 < s_j \leq 1}  2 B(s_j) (s_j +1) \hat{u}_j u_j(z) X^{s_j} + \sum_{1/2 < s_j \leq 1}  2D(s_j) (2-s_j) X^{1-s_j}\\
 &&+ O \left( \sum_{1/2 < s_j \leq 1} \G(s_j-1/2) \hat{u}_j u_j(z)Y+ \sum_{1/2 < s_j \leq 1}\G(1/2-s_j) \hat{u}_j u_j(z) X^{1/2} \right) \\
&&+ \sum_{0 \neq t_j \in \mathbb{R}} 2 d(f^+, t_j)  \hat{u}_j u_j(z)  +  O(X^{1/2} \log X).
\end{eqnarray*}
Since the spectrum is discrete, for $s_j$ corresponding to a small eigenvalue, $s_j- 1/2$ is bounded away from zero. As the number of small eigenvalues is finite, we get
\begin{eqnarray*}
 \sum_{1/2 < s_j \leq 1} \G(s_j-1/2) \hat{u}_j u_j(z)Y+ \sum_{1/2 < s_j \leq 1}\G(1/2-s_j) \hat{u}_j u_j(z) X^{1/2} = O (Y+X^{1/2}).
\end{eqnarray*}
By the same argument,
\begin{eqnarray*}
\sum_{1/2 < s_j \leq 1}  2D(s_j) (2-s_j) X^{1-s_j} = O(X^{1/2}).
\end{eqnarray*}
Let $A(s)$ be the function defined in Eq. (\ref{a-function}). Then 
\begin{eqnarray*}
A(s) = 2 B(s) (s+1),
\end{eqnarray*}
and after setting 
\begin{eqnarray*}
G(f^+,z) = \sum_{0 \neq t_j \in \mathbb{R}} 2 d(f^+, t_j)  \hat{u}_j u_j(z),
\end{eqnarray*}
we can rewrite the spectral expansion of $A(f^+) (z)$ as
\begin{equation} \label{afestimate}
A(f^+) (z) =  \sum_{1/2 < s_j \leq 1}  A(s_j) \hat{u}_j u_j(z) X^{s_j} + G(f^+,z)  +  O(Y + X^{1/2} \log X).
\end{equation}
Using again Proposition \ref{estimates1} and the discreteness of the spectrum, we get
\begin{eqnarray*}
G(f^+,z) &=& \sum_{|t_j|\geq 1} 2 d(f^+, t_j)  \hat{u}_j u_j(z) + \sum_{|t_j|<1} 2 d(f^+, t_j)  \hat{u}_j u_j(z)  \\
&=& \sum_{|t_j|\geq 1} 2 d(f^+, t_j)  \hat{u}_j u_j(z) +O(X^{1/2}).
\end{eqnarray*}
Since $d(f,t)$ is an even function of $t$, after using dyadic decomposition we get the bound 
\begin{eqnarray*}
\sum_{|t_j| \geq 1}  d(f^+, t_j)  \hat{u}_j u_j(z) &\ll& \sum_{t_j\geq 1}  d(f^+, t_j)  \hat{u}_j u_j(z) \\
&=& \sum_{n=0}^{\infty} \left( \sum_{2^n \leq t_j <2^{n+1}}  d(f^+, t_j)  \hat{u}_j u_j(z) \right) \\
&\ll& \sum_{n=0}^{\infty} \sup_{ 2^n \leq t_j < 2^{n+1}} d(f^+, t_j) \left( \sum_{2^n \leq t_j <2^{n+1}}  \hat{u}_j u_j(z) \right).
\end{eqnarray*}
Using \cite[prop.~7.2]{iwaniec}, Proposition \ref{estimates1} and Lemma \ref{lemmahuber2}, we get
\begin{eqnarray*}
G(f^+,z) &\ll& \sum_{n=0}^{\infty} 2^{-2n} \min \left\{2^n, X Y^{-1} \right\} X^{1/2} \left( \sum_{ t_j <2^{n+1}}  |\hat{u}_j|^2 \right)^{1/2} \left( \sum_{t_j < 2^{n+1}} |u_j(z)|^2 \right)^{1/2}  + X^{1/2} \\
&\ll& X^{1/2}  \left( \sum_{n=0}^{\infty}  2^{-n/2} \min \left\{2^n, X Y^{-1} \right\} \right) + X^{1/2}.
\end{eqnarray*}
We split the sum according to $n < \log_2(X/Y)$ and $n > \log_2(X/Y)$. We get
\begin{eqnarray*}
G(f^+,z) &\ll& X^{1/2}  \left( \sum_{n <\log_2(X/Y)}  2^{-n/2} \min \left\{2^n, X Y^{-1} \right\} \right) \\
&&+  X^{1/2} \left( \sum_{n \geq \log_2(X/Y)} 2^{-n/2}  \min \left\{2^n, X Y^{-1} \right\} \right) + X^{1/2},
\end{eqnarray*}
which is bounded by
\begin{eqnarray*}
X^{1/2}\sum_{n <\log_2(X/Y)}  2^{n/2} &+&  X^{3/2} Y^{-1}  \sum_{n \geq \log_2(X/Y)} 2^{-n/2}+  X^{1/2} \\
&\ll& X Y^{-1/2} + X^{1/2}.
\end{eqnarray*}
By (\ref{afestimate}) we finally get
\begin{equation}\label{efestimate} A(f^+)(z) = \sum_{1/2 < s_j \leq 1} A(s_j) \hat{u}_j u_j(z) X^{s_j} + O(XY^{-1/2} + Y + X^{1/2} \log X).
\end{equation}
We work similarly for $A(f^-)$ and we use
\begin{eqnarray*}
A(f^-) (z) \leq N(\mathcal{H}, X; z) \leq A(f^+)  (z)
\end{eqnarray*}
to obtain
\begin{eqnarray*}
E(\mathcal{H}, X;z) = O(XY^{-1/2} +Y + X^{1/2} \log X).
\end{eqnarray*}
The optimal error arises for $Y = XY^{-1/2}$, i.e. $Y = X^{2/3}$, which yields 
\begin{eqnarray*}
E(\mathcal{H}, X;z) = O(X^{2/3}).
\end{eqnarray*}
\end{proof}
\begin{remark} For $\lambda_0 =0$, i.e. $s_0 = 1$, the contribution to $M(\mathcal{H}, X;z)$ is $2 \hat{u}_0 u_0(z) X$, with
\begin{eqnarray*}
u_0 (z) =  \frac{1}{\sqrt{\vol(\GmodH)}}.
\end{eqnarray*}
It is immediate to see that
\begin{eqnarray*}
\hat{u}_0 =  \frac{1}{\sqrt{\vol(\GmodH)}} \frac{\mu}{\nu},
\end{eqnarray*}
hence we get Huber's main term (\ref{hubermainterm}).
\end{remark}

\section{The cofinite case}\label{cofinitecase}

Now, let $\Gamma$ be a cofinite Fuchsian group, and define $A(f)$ as in Eq. \ref{af}. The first obstacle we face is to examine whether $A(f)$ is in $L^2(\Gamma \backslash \mathbb{H})$. To see this, suppose that $f$ is compactly supported in $[1,K]$ with $K >0$ fixed, and consider the counting function
$$\tilde{N}(z,\delta) =  \# \{ \gamma \in \mathcal{H} : u(\gamma z,z) \leq \delta \}.$$
An element $\g$ contributes to the summation in $A(f)$ exactly when $\g \in \tilde{N}(z,\delta)$, with $\delta= K (\cosh(\mu) -1)$. To prove that $A(f)$ is in $L^2(\GmodH)$, it suffices to prove that $\tilde{N}(z,\delta)$ in uniformly bounded (i.e. independently of $z$).

\begin{lemma}\label{afinl2}The $\tilde{N}(z,\delta)$ in uniformly bounded, hence $A(f) \in L^2(\GmodH)$.
\end{lemma}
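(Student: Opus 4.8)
The plan is to reinterpret $\tilde{N}(z,\delta)$ as a count of $\Gamma$-translates of the axis $\ell$ of $g$ that pass close to $z$, and then to recognise this as the number of points of the single orbit $\Gamma z$ inside a \emph{fixed} compact set, whose cardinality is bounded independently of $z$. First I would rewrite the defining inequality geometrically. Writing $\gamma = a g^\nu a^{-1}$, the element $\gamma$ depends only on the coset $a\langle g\rangle \in \Gamma/\langle g\rangle$, and its axis is $a\ell$. Since $\mu(\gamma)=\mu$ is constant on $\mathcal{H}$, the standard identity $\sinh^2(\tfrac12\rho(z,\gamma z)) = \sinh^2(\mu/2)\cosh^2 d(z,a\ell)$ gives
\[
\cosh\rho(z,\gamma z) = 1 + (\cosh\mu - 1)\cosh^2 d(z,a\ell),
\]
so that $u(z,\gamma z) = \tfrac12(\cosh\mu-1)\cosh^2 d(z,a\ell)$. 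Hence the condition $u(z,\gamma z)\le \delta$ is equivalent to $d(z,a\ell)\le D$ for a constant $D$ depending only on $\delta$ and $\mu$ (equivalently only on the fixed support $[1,K]$ of $f$). Thus $\tilde{N}(z,\delta) = \#\{a\langle g\rangle : d(z,a\ell)\le D\}$. Putting $b=a^{-1}$ and using that $\langle g\rangle$ preserves $\ell$, so that $w\mapsto d(w,\ell)$ is $\langle g\rangle$-invariant, this equals the number of right cosets $\langle g\rangle b \in \langle g\rangle\backslash\Gamma$ with $d(bz,\ell)\le D$.

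Next I would pass to a fundamental domain for $\langle g\rangle$. Let $\mathcal{F}$ be the strip consisting of points whose foot of perpendicular to $\ell$ lies on a fixed segment $\sigma\subset\ell$ of length $\mu/\nu$, and set $K = \mathcal{F}\cap\{w : d(w,\ell)\le D\}$. This $K$ is compact, being a bounded region of width $\mu/\nu$ and height $2D$ about the compact arc $\sigma$. Choosing for each contributing right coset the unique representative $bz\in\mathcal{F}$, each $\langle g\rangle$-orbit of such points meets $K$ exactly once, so the contributing cosets correspond to points of $\Gamma z$ lying in $K$; a nontrivial stabiliser $\mathrm{Stab}_\Gamma(z)$ can make this correspondence at most $|\mathrm{Stab}_\Gamma(z)|$-to-one, giving $\tilde{N}(z,\delta)\le e_{\max}\,\#(\Gamma z\cap K)$, where $e_{\max}$ is the (finite) maximal order of an elliptic element of $\Gamma$.

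Finally I would bound $\#(\Gamma z\cap K)$ uniformly in $z$. If $\mathcal{D}$ is a locally finite fundamental domain for $\Gamma$, then every point of $\Gamma z\cap K$ lies in some translate $\gamma\mathcal{D}$, and distinct orbit points lie in distinct translates, whence
\[
\#(\Gamma z\cap K) \le \#\{\gamma\in\Gamma : \gamma\mathcal{D}\cap K\neq\emptyset\} =: C < \infty .
\]
Here $C$ depends only on $\Gamma$ and $K$ and \emph{not} on $z$, since $K$ is compact and the tessellation $\{\gamma\mathcal{D}\}$ is locally finite. This yields the uniform bound $\tilde{N}(z,\delta)\le e_{\max}C$. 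The membership $A(f)\in L^2(\GmodH)$ then follows at once: $|A(f)(z)| \le \|f\|_\infty\,\tilde{N}(z,\delta) \le e_{\max}C\,\|f\|_\infty$, so $A(f)$ is bounded, and a bounded function on the finite-area surface $\GmodH$ is automatically square-integrable.

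The main obstacle is precisely the cofinite setting: near a cusp the orbit $\Gamma z$ clusters and the injectivity radius tends to $0$, so any crude separation-of-orbit-points estimate degenerates as $z$ approaches a cusp. What rescues the argument is that the relevant region $K$ is a \emph{fixed} compact set sitting in a tube about the compact closed geodesic, hence bounded away from the cusps; consequently the number of fundamental-domain translates meeting $K$ is finite and entirely independent of $z$. The real care therefore lies in establishing the clean reduction to intersection with such a fixed compact $K$ — in particular the passage to right cosets, the choice of the perpendicular strip $\mathcal{F}$, and the (harmless) bookkeeping of elliptic stabilisers.
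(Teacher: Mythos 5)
Your proof is correct, but it takes a genuinely different route from the paper's. The paper works directly at the cusp: after conjugating a cusp to $\infty$, it invokes the horocycle counting bound $\#\{\gamma \in \Gamma_{\infty}\backslash\Gamma : \Im(\gamma z) > Y\} < 1 + 10/(c_{\infty}Y)$ (Iwaniec, Lemma 2.10) to show that for large $Y$ only the trivial coset reaches above height $Y$, so every $\gamma \in \mathcal{H}$ satisfies $\Im(\gamma z) < Y_0$ uniformly in $z$; feeding this into $u(\gamma z, z) \leq \delta$ then forces $\Im(z) \leq M$ and confines $\gamma z$ to a compact set independent of $z$. You instead use the hyperbolic trigonometric identity $\sinh\tfrac{1}{2}\rho(z,\gamma z) = \sinh(\mu/2)\cosh d(z, a\ell)$ --- the same identity underlying Huber's interpretation quoted in the introduction --- to convert the condition $u(z,\gamma z)\leq\delta$ into membership of orbit points in a fixed compact tube segment around the closed geodesic, and you finish by local finiteness of the $\Gamma$-tessellation. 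Your version buys several things: it handles any number of cusps at once (the paper assumes a single cusp ``for simplicity''), it makes explicit the final counting step that the paper leaves implicit (why confinement to a fixed compact set yields a uniform bound), and it isolates the conceptual mechanism, namely that the closed geodesic attached to $\mathcal{H}$ is compact and at positive distance from the cusps. The paper's computation is more elementary (no trig identity, no coset bookkeeping) and quantifies directly how high in the cusp points $\gamma z$, $\gamma\in\mathcal{H}$, can climb. Two cosmetic cautions only: your set $K$ clashes notationally with the $K$ already used for the support $[1,K]$ of $f$, and the ``unique representative $bz \in \mathcal{F}$'' requires taking the strip half-open to avoid boundary identifications; neither affects the argument, and your elliptic factor $e_{\max}$ is indeed finite since a cofinite Fuchsian group has finite signature. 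Like the paper's proof, yours actually gives the stronger conclusion that $A(f)$ is bounded, with $L^2$ membership following from finiteness of the area.
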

\begin{proof}
For simplicity, assume that $\g$ has only one cusp at $\frak{a}$. Conjugating, we can assume that $\frak{a} =\infty$. Then, for $Y>0$, consider the set 
\begin{eqnarray*}
A(Y) = \{ \g \in \G_{\infty} \backslash \G : \Im (\g z) >Y \}.
\end{eqnarray*}
Lemma \cite[Lemma 2.10, p.~50]{iwaniec} shows that
\begin{eqnarray*}
\# A(Y) <1+ \frac{10}{c_{\infty} Y},
\end{eqnarray*}
where $c_{\infty}$ is a constant depending only on the cusp $\infty$. That means there exists a $Y_0$ such that for every $Y>Y_0$, 
$$\# A(Y) \leq 1,$$
 i.e. for $Y$ large enough $A(Y)$ contains at most one class $\G_{\infty} \g$. Since the fundamental domain contains points of deformation $\leq 1$, we have $\Im(z) \geq \Im(\g z) >Y$. Since the trivial class leaves the $ \Im (\g z) = \Im(z)$, we have $\g \in  \G_{\infty}$. Hence, if $\g \in \mathcal{H}$, then, for all $z$, $\Im (\g z) <Y_0$, where $Y_0$ depends only on the cusp $\infty$.

 Now, let $\g$ be an element of $\mathcal{H}$ such that $u(\g z, z) \leq \delta$. Using the formula
\begin{eqnarray*}
u(z, w) = \frac{|z-w|^2}{ 4 \Im(z) \Im(w)}
\end{eqnarray*}
we obtain 
\begin{eqnarray*}
\Im(z) - \Im(\g z) \leq |z- \g z| \leq \sqrt{ 4 \delta \Im(z) \Im(\g z)} \leq \sqrt{ 4 \delta Y_0 \Im(z)},
\end{eqnarray*}
hence
\begin{eqnarray*}
\Im(z) \leq \sqrt{ 4 \delta Y_0 \Im(z)} + Y_0.
\end{eqnarray*}
This inequality implies an upper bound $\Im(z) \leq M$, where $M$ depends only on $Y_0$ and $\delta$. We also get
\begin{eqnarray*}
\Re(\g z) - \Re(z) \leq |z- \g z| \leq \sqrt{ 4 \delta \Im(z) \Im(\g z)} \leq \sqrt{ 4 \delta Y_0 M},
\end{eqnarray*}
and since, for $\G$ cofinite, we have a uniform bound $|\Re(z)| <M$, we get also get a uniform bound for $\Re(\g z)$. That means, for all $\g \in \mathcal{H}$ satisfying $u(\g z, z) \leq \delta$, $\g z$ lies in a compact set, which does not depend of $z$ but only on $\delta$. This proves  $\tilde{N}(z,\delta)$ in uniformly bounded, and thus $A(f) \in L^2(\GmodH)$.
\end{proof}

Lemma \ref{afinl2} allows us to write a spectral expansion for $A(f)$. For $\Gamma$ cofinite but not cocompact the continuous spectrum of $-\Delta$ covers the segment $[1/4, \infty)$ uniformly with multiplicity the number of cusps; the eigenfunction that corresponds to the eigenvalue $\lambda = {1}/{4} + t^2 \geq {1}/{4}$ is the Eisenstein series $E_{\mathfrak{a}} (z, 1/2+ it)$. The spectral expansion of $A(f)$ becomes
\begin{equation} \label{cofinitecaseexpansion}
A(f) = \sum_{j} c(f, t_j) u_j(z) + \sum_{\mathfrak{a}} \frac{1}{4 \pi}  \int_{-\infty}^{\infty} c_{\mathfrak{a}}(f,t) E_{\mathfrak{a}} (z, 1/2 + it ) dt.
\end{equation}
The rest of the proof for the cofinite case is the same as for the cocompact case: the estimates of $d(f,t)$ that we wrote in section \ref{cocompactcase} (Proposition \ref{estimates1}) do not depend on the compactness of the group $\Gamma$, but only on the spectral parameter $t$. To complete the proof of Theorem \ref{theorem1}, we need the analogues of Lemmas \ref{lemmahuber} and \ref{lemmahuber2} for Eisenstein series.

Examining the proof of Lemma \ref{lemmahuber} in \cite{huber}, we notice that, along the same lines, we can prove the following version for Eisenstein series.
\begin{lemma}\label{eisensteinlemma1} We have
$$c_{\mathfrak{a}}(f,t) = 2 \hat{E}_{\mathfrak{a}} (1/2+ it) d (f, t),$$
where $\hat{E}_{\mathfrak{a}} (1/2+ it)$ is the integral 
$$\hat{E}_{\mathfrak{a}} (1/2+ it)=\int_{\sigma} E_{\mathfrak{a}} (z, 1/2+ it) ds$$
across a segment $\sigma$ of the invariant geodesic of $\gamma$ with length $\int_{\sigma} ds = \mu / \nu$, $d(f, t)$ given by eq. (\ref{coefficients}) and $\xi_{\lambda}$ satisfying eq. (\ref{huberresult}) with the same initial conditions.
\end{lemma}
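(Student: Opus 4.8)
The plan is to run the proof of Lemma \ref{lemmahuber} essentially verbatim, after observing that the only properties of the eigenfunction $u_j$ used there are its $\G$-invariance and the fact that it solves $-\Delta u_j = (1/4+t_j^2)u_j$. Both are shared by $E_{\mathfrak{a}}(z,1/2+it)$, since $-\Delta E_{\mathfrak{a}}(\cdot,s) = s(1-s)E_{\mathfrak{a}}(\cdot,s)$ with $s=1/2+it$ gives eigenvalue $\lambda = 1/4+t^2$. Here $c_{\mathfrak{a}}(f,t)$ is the pairing $\int_{\GmodH} A(f)(z)\,E_{\mathfrak{a}}(z,1/2+it)\,d\mu(z)$ appearing in \eqref{cofinitecaseexpansion}; although $E_{\mathfrak{a}}$ is not square-integrable, Lemma \ref{afinl2} shows that $A(f)$ is supported in a fixed compact subset of $\GmodH$, so this integral converges absolutely and the manipulations below are justified.

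First I would unfold. Parametrising $\mathcal{H}$ by the cosets $\G/\langle g\rangle$ of the centraliser $\langle g\rangle$ of the primitive element $g$, and using the isometry-invariance of $\r$ together with the $\G$-invariance of $E_{\mathfrak{a}}$, the usual unfolding collapses the sum over $\G/\langle g\rangle$ against the integral over $\GmodH$ into a single integral over $\langle g\rangle\backslash\H$:
\[
c_{\mathfrak{a}}(f,t) = \int_{\langle g\rangle\backslash\H} f\!\left(\frac{\cosh\r(z,g^{\nu}z)-1}{\cosh\mu-1}\right) E_{\mathfrak{a}}(z,1/2+it)\,d\mu(z).
\]

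Next I would pass to geodesic coordinates adapted to the invariant geodesic of $g$, which after conjugation we take to be the imaginary axis, so that $g^{\nu}\colon z\mapsto e^{\mu}z$. Writing $z=e^{\tau}e^{i\theta}$ one checks directly that $\tfrac{\cosh\r(z,g^{\nu}z)-1}{\cosh\mu-1}=\tfrac{1}{\sin^2\theta}$ and $d\mu(z)=d\tau\,\tfrac{d\theta}{\sin^2\theta}$, with $\langle g\rangle$ acting by $\tau\mapsto\tau+\mu/\nu$, so that $0\le\tau<\mu/\nu$, $0<\theta<\pi$ is a fundamental domain. Setting $w(\theta)=\int_0^{\mu/\nu}E_{\mathfrak{a}}(e^{\tau}e^{i\theta},1/2+it)\,d\tau$, the test-function factor depends only on $\theta$, whence $c_{\mathfrak{a}}(f,t)=\int_0^{\pi} f(1/\sin^2\theta)\,w(\theta)\,\tfrac{d\theta}{\sin^2\theta}$. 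Since $\Delta=\sin^2\theta\,(\partial_{\tau\tau}+\partial_{\theta\theta})$ in these coordinates, integrating the eigenvalue equation over one $\tau$-period annihilates the $\partial_{\tau\tau}$ term by periodicity and yields $w''(\theta)+\tfrac{\lambda}{\sin^2\theta}w(\theta)=0$, which under $v=\theta-\pi/2$ is exactly equation \eqref{huberresult}.

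Finally I would isolate $\xi_{\lambda}$. Because the potential $\lambda/\cos^2 v$ and the surviving integrand $f(1/\cos^2 v)/\cos^2 v$ are both even in $v$, only the even part of $w$ contributes; being an even solution of \eqref{huberresult} it equals $w(0)\,\xi_{\lambda}(v)$ by the normalisation $\xi_{\lambda}(0)=1$, $\xi_{\lambda}'(0)=0$. On the axis $\theta=\pi/2$ the parameter $\tau$ is hyperbolic arc length, so $w(0)=\int_{\sigma}E_{\mathfrak{a}}(z,1/2+it)\,ds=\hat{E}_{\mathfrak{a}}(1/2+it)$ over a segment of length $\mu/\nu$; folding the integral to $[0,\pi/2]$ contributes the factor $2$ and the definition \eqref{coefficients} of $d(f,t)$, giving $c_{\mathfrak{a}}(f,t)=2\hat{E}_{\mathfrak{a}}(1/2+it)\,d(f,t)$. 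The one genuine point needing care, and the only place the argument departs from Lemma \ref{lemmahuber}, is the convergence and legitimacy of the unfolding for the non-$L^2$ Eisenstein series; this is precisely where the compact support of $A(f)$ from Lemma \ref{afinl2} is indispensable, whereas square-integrability of $u_j$ handled it automatically in the discrete case.
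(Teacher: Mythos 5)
Your proposal is correct and takes essentially the same route as the paper: the paper's proof consists precisely of the observation that Huber's proof of Lemma \ref{lemmahuber} applies verbatim to $E_{\mathfrak{a}}(z,1/2+it)$, since only $\Gamma$-automorphy and the eigenvalue equation with $\lambda=1/4+t^2$ are used, with Lemma \ref{afinl2} (proved just before, to justify the expansion \eqref{cofinitecaseexpansion}) handling the non-$L^2$ nature of the Eisenstein series. Your write-up simply makes explicit the unfolding, the Fermi-coordinate computation, and the reduction of the period integral to the even solution $\xi_{\lambda}$, all of which the paper delegates to the citation of \cite{huber}.
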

The analogue of Lemma \ref{lemmahuber2} for Eisenstein series is the following lemma.
\begin{lemma}\label{eisensteinlemma2} We have the bound
$$\int_{-T}^{T} \Big|\hat{E}_{\mathfrak{a}} ( 1/2+ it) \Big|^2 dt \ll T.$$
\end{lemma}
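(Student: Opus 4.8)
The plan is to deduce the bound from the pre-trace formula by a positivity argument, integrating the automorphic kernel against arc length on $\sigma$ in both variables; as a byproduct the same argument reproves the discrete-spectrum bound of Lemma \ref{lemmahuber2} in the cofinite case, and the inequality for the continuous spectrum is then extracted by positivity. First I would fix a point-pair invariant test function $k$ whose Selberg--Harish-Chandra transform $h$ is even, nonnegative on $\R$, satisfies $h(t)\ge c>0$ for $|t|\le T$, and whose inverse transform $g$ is a nonnegative bump supported in $[-1/T,1/T]$ with $g(0)\asymp T$. Concretely one takes $h(t)=h_0(t/T)$ with $h_0=|\hat\psi|^2$ for a fixed nonnegative even bump $\psi$ supported in $[-1/2,1/2]$, so that $g(r)=Tg_0(Tr)$ is nonnegative and the associated $k$ is itself nonnegative. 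Forming $K(z,w)=\sum_{\gamma\in\G}k(u(\gamma z,w))$ and using its spectral expansion (the pre-trace formula, \cite{iwaniec}), I would integrate over $z,w\in\sigma$ against arc length to obtain
$$\int_\sigma\!\int_\sigma K(z,w)\,ds(z)\,ds(w)=\sum_j h(t_j)|\hat u_j|^2+\sum_{\mathfrak a}\frac{1}{4\pi}\int_{-\infty}^{\infty}h(t)\,\big|\hat E_{\mathfrak a}(1/2+it)\big|^2\,dt,$$
where on the right appear precisely the period integrals of Lemmas \ref{lemmahuber} and \ref{eisensteinlemma1}. Every term is nonnegative, and since $h(t)\ge c$ for $|t|\le T$ a single cusp contributes at least $\frac{c}{4\pi}\int_{-T}^{T}|\hat E_{\mathfrak a}(1/2+it)|^2\,dt$. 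Hence it suffices to bound the geometric side by $O(T)$.

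For the geometric side I would separate the diagonal contribution of the identity coset from the rest. Since $\sigma$ is a segment of a closed geodesic it lies in a fixed compact part of $\GmodH$, bounded away from the cusps, so all estimates below are uniform and the period integrals converge. Using that along a geodesic parametrized by arc length $u(z,w(r))=\sinh^2(r/2)$, the diagonal term is bounded by $\frac{\mu}{\nu}\int_{-\infty}^{\infty}k(\sinh^2(r/2))\,dr$, and the Abel-transform identities relating $k$ to $g$ give $\int_{-\infty}^{\infty}k(\sinh^2(r/2))\,dr\asymp g(0)\asymp T$; thus the diagonal term is $O(T)$. For the off-diagonal terms, $k(u(\gamma z,w))$ vanishes unless $\rho(\gamma z,w)\lesssim 1/T\le 1$, i.e.\ unless $\gamma\sigma$ meets a bounded neighbourhood of $\sigma$; by discreteness of $\G$ and compactness of $\sigma$ there are at most $C=C(\G,\sigma)$ such $\gamma$, a number independent of $T$, and each contributes $O(T)$ by the same one-dimensional estimate. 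Summing, the geometric side is $O(T)$, which yields $\int_{-T}^{T}|\hat E_{\mathfrak a}(1/2+it)|^2\,dt\ll T$.

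The main obstacle I anticipate is the careful handling of the geometric side: on the one hand, verifying that the diagonal integral is of exact order $T$ (and no larger), which is where the normalization $g(0)\asymp T$ and the transform identities must be used rather than crude bounds; on the other hand, controlling the off-diagonal terms, where one must check that the number of $\gamma$ with $\gamma\sigma$ meeting a $1/T$-neighbourhood of $\sigma$ stays bounded uniformly in $T$, and in particular that the finitely many powers of $g$ translating $\sigma$ back near itself each contribute only $O(T)$ rather than accumulating. Choosing $k\ge 0$ (equivalently $g$ nonnegative and monotone) is what makes every geometric term a genuine nonnegative quantity, so that the crude count of relevant $\gamma$ suffices and no cancellation needs to be tracked.
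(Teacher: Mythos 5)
Your proof is correct, but it takes a genuinely different route from the paper's. You prove the bound as a local Weyl law for periods via the pre-trace formula: integrating the automorphic kernel $K(z,w)=\sum_{\gamma\in\G}k(u(\gamma z,w))$ over $\sigma\times\sigma$ in both variables, using positivity of $h$ on the full spectrum (including the exceptional $t_j\in i\R$, which your choice $g=\psi\star\psi$ with $\psi\ge 0$ even does guarantee, since then $h(i\eta)=\int g(r)e^{\eta r}\,dr>0$) to discard the discrete part, and bounding the geometric side by $O(T)$ --- the diagonal and the finitely many $\gamma$ stabilizing the axis of $g$ each give $O(T)$ by your one-dimensional estimate (using $\cosh\rho(\gamma z,w(r'))=\cosh d\,\cosh(r'-r_0')$ to reduce to the line), while transversally crossing $\gamma$ give $O(1)$ since $\sup k\ll T^2$ against a sublevel set of measure $O(T^{-2})$; this is in the spirit of Good's Theorem 2 and Tsuzuki's relative trace formula, and, as you note, it reproves Lemma \ref{lemmahuber2} as a byproduct. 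The paper instead stays entirely inside the machinery already built for $A(f)$: it takes the sharp cutoff $f$ at the microscopic radius $X=\sqrt{1+2/T^2}$, so that $A(f)(z)=N(\mathcal{H},X;z)$ is uniformly bounded by Lemma \ref{afinl2}, whence $\int_{\GmodH}A(f)^2\,d\mu\le M\int_{\GmodH}A(f)\,d\mu\ll T^{-1}$ by an $L^1$ estimate of Huber; on the spectral side, Parseval plus positivity isolates the Eisenstein contribution $\sum_{\mathfrak{a}}\frac{1}{\pi}\int_{-T}^{T}\bigl|\hat{E}_{\mathfrak{a}}(1/2+it)\bigr|^2\bigl(\int_0^{v_T}\xi_{\lambda}(v)\cos^{-2}v\,dv\bigr)^2dt$, and the lower bound $\int_0^{v_T}\xi_{\lambda}(v)\cos^{-2}v\,dv\gg T^{-1}$ for $|t|\le T$, taken from Huber's appendix, closes the argument. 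What each buys: the paper's route is shorter, needs no geometric-side analysis at all, and recycles Lemma \ref{afinl2} together with Huber's special-function estimates; your route avoids Huber's lower bound for the transform entirely, is independent of the conjugacy-class structure, and with more effort would yield the asymptotic (not merely the upper bound) for both the discrete and continuous period means. The two technical points you flag --- arranging $g$ nonnegative and monotone so that $k\ge 0$ (or simply bounding the geometric terms in absolute value), and checking that the finitely many powers of $g$ translating $\sigma$ along its own axis each contribute only $O(T)$ --- are exactly where care is needed, and both go through as you describe.
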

\begin{proof} For $T>0$, define the angle $v_T \in (0,\frac{\pi}{2})$ by the relation
$$\tan (v_T) = \frac{\sqrt{2}}{T},$$
and the function $f$ as
$$f(u)= \left\{ \begin{array}{rcl}
1, & \mbox{for} & 1\leq u \leq \cos^{-2}(v_T),
\\ 0, & \mbox{for} & \cos^{-2}(v_T) < u.
\end{array} \right. $$
Thus, for 
$$X= \sqrt{1+ \frac{2}{T^2}},$$
we have $A(f)(z) = N(\mathcal{H}, X;z)$. By lemma \ref{afinl2}, we get that $A(f)$ is in $L^2(\Gamma \backslash \mathbb{H})$. Moreover, lemma \ref{afinl2} shows that
\begin{eqnarray*}
M_X := \sup_{z \in \mathbb{H}} N(\mathcal{H}, X;z) <\infty.
\end{eqnarray*}
Since we are interested about the estimate as $T \to \infty$, $X$ remains bounded and hence $M_X$ can be chosen uniformly bounded by some $M$. Then, we have the trivial bound 
$$\int_{\GmodH} \left(A(f)(z)\right)^2 d\mu(z) \leq M \int_{\GmodH} A(f)(z) d\mu(z),$$
and, by \cite[p.~24, eq.~(60)]{huber}, we get the bound
$$ \int_{\GmodH} A(f)(z) d\mu(z) \ll T^{-1}.$$
By lemma \ref{eisensteinlemma1}, for $\lambda=1/4+t^2$ we get
$$c_{\mathfrak{a}}(f,t) = 2 \hat{E}_{\mathfrak{a}} ( 1/2+ it)  \int_{0}^{v_T} \frac{\xi_{\lambda}(v)}{\cos^2(v)} dv.$$
On the other hand, from the Parseval's identity we get
\begin{eqnarray*}
\int_{F} \left(A(f)(z)\right)^2 d\mu(z) &=& \sum_{j} |c(f,t_j)|^2 + \sum_{\mathfrak{a}} \frac{1}{4\pi} \int_{-\infty}^{+\infty} |c_{\mathfrak{a}}(f,t)|^2 dt\\
&\geq& \sum_{\mathfrak{a}} \frac{1}{4\pi} \int_{-T}^{T} |c_{\mathfrak{a}}(f,t)|^2 dt \\
&\geq& \sum_{\mathfrak{a}}\frac{1}{\pi}\int_{-T}^{T}\left|\hat{E}_{\mathfrak{a}}( 1/2+it)\right|^2 \left(\int_{0}^{v_T}\frac{\xi_{\lambda}(v)}{\cos^2(v)} dv\right)^2 dt.
\end{eqnarray*}
We use \cite[Appendix, eq.~(5), p.~39]{huber}, as in the proof of Lemma \ref{lemmahuber2} in \cite[p.~24]{huber} to get
$$\int_{0}^{v_T}\frac{\xi_{\lambda}(v)}{\cos^2(v)} dv \gg  T^{-1},$$
hence
$$\int_{-T}^{T}\left|\hat{E}_{\mathfrak{a}}(1/2+it)\right|^2 \ll T.$$
\end{proof}
We can now finish the proof of Theorem \ref{theorem1} as follows.
\begin{theorem} \label{theorempart2} Let $\Gamma$ be a cofinite Fuchsian group, and $\mathcal{H}$ a hyperbolic conjugacy class of $\Gamma$. Then
$$E(\mathcal{H}, X; z) = O(X^{2/3}).$$
\end{theorem}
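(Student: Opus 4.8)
The plan is to mirror the proof of Theorem~\ref{theorempart1}, now carrying along the continuous spectrum appearing in the spectral expansion \eqref{cofinitecaseexpansion}. Having established in Lemma~\ref{afinl2} that $A(f^{\pm}) \in L^2(\GmodH)$, I would substitute the identity $c(f,t_j) = 2 d(f,t_j)\hat u_j$ of Lemma~\ref{lemmahuber} into the discrete part and the identity $c_{\mathfrak a}(f,t) = 2 d(f,t)\hat E_{\mathfrak a}(1/2+it)$ of Lemma~\ref{eisensteinlemma1} into the continuous part, rewriting \eqref{cofinitecaseexpansion} as
\begin{equation*}
A(f^+)(z) = \sum_j 2 d(f^+,t_j)\hat u_j u_j(z) + \sum_{\mathfrak a} \frac{1}{2\pi}\int_{-\infty}^{\infty} d(f^+,t)\,\hat E_{\mathfrak a}(1/2+it)\,E_{\mathfrak a}(z,1/2+it)\,dt.
\end{equation*}
The discrete sum is treated verbatim as in Theorem~\ref{theorempart1}: the finitely many small eigenvalues $1/2 < s_j \le 1$ produce, via parts (a) and (c) of Proposition~\ref{estimates1}, the main term $\sum_{1/2 < s_j \le 1} A(s_j)\hat u_j u_j(z) X^{s_j}$ together with $O(Y + X^{1/2})$, while the tempered part $0 \neq t_j \in \mathbb R$ is organised into dyadic blocks and bounded using part (b) of Proposition~\ref{estimates1}, Lemma~\ref{lemmahuber2} and the local Weyl law \cite[prop.~7.2]{iwaniec}, contributing $O(XY^{-1/2} + X^{1/2})$. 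As stressed in the remark preceding this theorem, none of these estimates used cocompactness.

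The genuinely new work is the continuous contribution. Since the continuous spectrum lies entirely in $[1/4,\infty)$, there is no main term here and every spectral parameter is real, so only part (b) of Proposition~\ref{estimates1} is needed, supplying the oscillatory decay $d(f^+,t) = O(t^{-2}\min\{t, XY^{-1}\}X^{1/2})$ for $|t|\ge 1$; this decay also guarantees absolute convergence of the defining integral. I would first isolate the range $|t| < 1$, on which $d(f^+,t) = O(X^{1/2}\log X)$ by part (d) and continuity while $\hat E_{\mathfrak a}$ and $E_{\mathfrak a}(z,\cdot)$ remain bounded on this compact set, so that this piece is $O(X^{1/2}\log X)$. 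On $|t| \ge 1$ I would split into dyadic ranges $2^n \le |t| < 2^{n+1}$, where Cauchy--Schwarz gives
\begin{equation*}
\int_{2^n \le |t| < 2^{n+1}} \bigl|\hat E_{\mathfrak a}(1/2+it)\,E_{\mathfrak a}(z,1/2+it)\bigr|\,dt \ll \Bigl(\int_{-2^{n+1}}^{2^{n+1}}\!|\hat E_{\mathfrak a}|^2\,dt\Bigr)^{1/2}\Bigl(\int_{-2^{n+1}}^{2^{n+1}}\!|E_{\mathfrak a}(z,\cdot)|^2\,dt\Bigr)^{1/2} \ll 2^{n/2}\cdot 2^{n},
\end{equation*}
using Lemma~\ref{eisensteinlemma2} for the first factor and the continuous local Weyl law $\int_{-T}^{T}|E_{\mathfrak a}(z,1/2+it)|^2\,dt \ll_z T^2$ (the Eisenstein-series analogue of \cite[prop.~7.2]{iwaniec}) for the second. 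Multiplying by $\sup_{2^n \le |t|} d(f^+,t) \ll 2^{-2n}\min\{2^n, XY^{-1}\}X^{1/2}$ reproduces exactly the dyadic sum $X^{1/2}\sum_n 2^{-n/2}\min\{2^n, XY^{-1}\}$ already bounded by $O(XY^{-1/2} + X^{1/2})$ in the proof of Theorem~\ref{theorempart1}.

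Assembling the two parts yields the same estimate as \eqref{efestimate}, namely
\begin{equation*}
A(f^+)(z) = \sum_{1/2 < s_j \le 1} A(s_j)\hat u_j u_j(z) X^{s_j} + O(XY^{-1/2} + Y + X^{1/2}\log X),
\end{equation*}
and the identical analysis applies to $A(f^-)$. Sandwiching via $A(f^-)(z) \le N(\mathcal H, X;z) \le A(f^+)(z)$ and optimising $Y = XY^{-1/2}$, i.e. $Y = X^{2/3}$, gives $E(\mathcal H, X;z) = O(X^{2/3})$. The main obstacle is controlling the continuous part: the whole argument hinges on the mean-value bound of Lemma~\ref{eisensteinlemma2}, which replaces Lemma~\ref{lemmahuber2} and, together with the $t^{-2}$ decay of $d(f^+,t)$ from Proposition~\ref{estimates1}(b), guarantees both the convergence of the spectral integral and the correct size of each dyadic block. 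Verifying the continuous local Weyl law for a fixed $z$ and the boundedness of $E_{\mathfrak a}(z,1/2+it)$ near $t=0$ are the remaining technical points, but both are standard.
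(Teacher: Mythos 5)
Your proposal is correct and follows essentially the same route as the paper: the paper's own proof of Theorem \ref{theorempart2} is exactly this reduction, handling the Maa{\ss}-form part verbatim as in Theorem \ref{theorempart1} and treating the Eisenstein contribution with Proposition \ref{estimates1}, Lemma \ref{eisensteinlemma1} in place of Lemma \ref{lemmahuber}, and the mean-square bound of Lemma \ref{eisensteinlemma2} in place of Lemma \ref{lemmahuber2}, with the continuous local Weyl law supplied by the same reference \cite[prop.~7.2]{iwaniec}. In fact you spell out details the paper leaves implicit, such as the separate treatment of the range $|t|<1$ where the bound of Proposition \ref{estimates1}(b) is not integrable near $t=0$.
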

\begin{proof}
The part that corresponds to the Maa{\ss} cusp forms can be handled exactly as in the cocompact case. For the contribution of Eisenstein series in the spectral expansion of $A(f)(z)$ we need the estimate
$$\sum_{\mathfrak{a}}  \int_{-\infty}^{\infty} d(f,t) \hat{E}_{\mathfrak{a}}(1/2+it) E_{\mathfrak{a}} (z, 1/2 + it ) dt = O(XY^{-1/2} + X^{1/2}).$$
We use Proposition \ref{estimates1}, Lemmas \ref{eisensteinlemma1} and \ref{eisensteinlemma2} exactly the same way as in the proof of Theorem \ref{theorempart1}.
\end{proof}

\section{Averaging results}\label{averagingresults}

Let $\Gamma$ be a cofinite Fuchsian group. We now apply the large sieve results of \cite{cham1} for the Riemann surfaces $\Gamma \backslash \mathbb{H}$ to obtain averaging results for $E(\mathcal{H}, X; z)$. To be precise, let $a_j$ be a sequence of complex numbers and, for each cusp $\mathfrak{a}$, let $a_{\mathfrak{a}}(t)$ be a continuous function of $t$. We have the following results:
\begin{theorem} [Chamizo, \cite{cham1}]\label{lemmacham1} Given $z \in \GmodH$, $T, X>1$ and $x_1, x_2,...x_R \in [X,2X]$, if $|x_k - x_{\ell}| > \delta > 0$ for $k \neq \ell $, then
$$ \sum_{m=1}^{R} \left| \sum_{|t_j| \leq T} a_j x_m^{it_j} u_j(z) +\sum_{\mathfrak{a}}  \frac{1}{4\pi} \int_{-T}^{T} a_{\mathfrak{a}}(t) x_m^{it} E_{\mathfrak{a}}(z, 1/2 +it ) dt \right|^2$$
$$ \ll (T^2 + XT\delta^{-1}) \|a\|_{*}^2,$$
where
$$\|a\|_{*} = \left( \sum_{|t_j|\leq T} |a_j|^2 + \sum_{\mathfrak{a}} \frac{1}{4\pi} \int_{-T}^{T} |a_{\mathfrak{a}}(t)|^2dt \right)^{1/2},$$
and the \lq{$\ll$\rq} constant depends on $\G$ and $y_{\G}(z)$.
\end{theorem}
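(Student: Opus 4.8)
The statement is a spectral large sieve inequality, and the plan is to prove it by the standard duality-plus-pre-trace-formula method. First I would apply the duality principle for bilinear forms: writing the left-hand side as $\|A\mathbf a\|^2$ for the linear operator $A$ sending a spectral datum to the vector indexed by $m$, the asserted operator bound $\|A\|^2\ll T^2+XT\delta^{-1}$ is equivalent to the dual inequality
\[
\sum_{|t_j|\le T}|u_j(z)|^2\Big|\sum_m b_m x_m^{-it_j}\Big|^2+\sum_{\mathfrak{a}}\frac{1}{4\pi}\int_{-T}^{T}|E_{\mathfrak{a}}(z,1/2+it)|^2\Big|\sum_m b_m x_m^{-it}\Big|^2\,dt\ll (T^2+XT\delta^{-1})\sum_m|b_m|^2 ,
\]
which is the form I would actually establish.

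Second, I would remove the sharp cutoff $|t_j|\le T$ by choosing an even nonnegative test function $h$ with $h(t)\ge 1$ on $[-T,T]$ whose Selberg--Harish--Chandra transform is a point-pair invariant concentrated at geodesic distance $\lesssim 1/T$ with peak height $\asymp T^2$. Since each spectral term carries the nonnegative weight $|u_j(z)|^2$ (respectively $|E_{\mathfrak{a}}|^2$), inserting $h(t_j)$ and extending to the whole spectrum only enlarges the left-hand side. Expanding the squares and interchanging the summations reduces the problem to bounding the Hermitian bilinear form $\sum_{m,n}b_m\overline{b_n}\,\mathcal{K}(x_m,x_n)$, where
\[
\mathcal{K}(x_m,x_n)=\sum_j h(t_j)|u_j(z)|^2\Big(\frac{x_n}{x_m}\Big)^{it_j}+\sum_{\mathfrak{a}}\frac{1}{4\pi}\int h(t)|E_{\mathfrak{a}}(z,1/2+it)|^2\Big(\frac{x_n}{x_m}\Big)^{it}\,dt .
\]
Because this form is real, I may replace $\mathcal{K}$ by its Hermitian part and so view it, via the pre-trace formula, as the automorphic kernel $\sum_{\gamma\in\Gamma}k_r\big(u(z,\gamma z)\big)$ attached to the shifted transform $h(t)\cos(tr)$ with $r=\log(x_n/x_m)$; in the Harish--Chandra expansion of the spherical function the factor $e^{\pm itr}$ translates the peak of $k_r$ to $\rho(z,\gamma z)\approx|r|$, of width $\sim 1/T$, and since all $x_m\in[X,2X]$ forces $|r|\le\log 2=O(1)$ the height stays $\asymp T^2$.

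Third, I would split into diagonal and off-diagonal. The diagonal $m=n$ gives $\mathcal{K}(x,x)\asymp\sum_{t_j\le T}|u_j(z)|^2+(\text{continuous part})\ll T^2$ by the local Weyl law, producing the term $T^2\sum_m|b_m|^2$. For the off-diagonal I would bound $|b_m\overline{b_n}|\le\tfrac12(|b_m|^2+|b_n|^2)$ and estimate $\sum_n|\mathcal{K}(x_m,x_n)|$ for fixed $m$ by $\ll T^2\sum_{\gamma}\#\{n:|\log(x_n/x_m)|\approx\rho(z,\gamma z)\text{ within }1/T\}$. The counting input is that the numbers $\log x_n$ are $\gg\delta/X$-separated in an interval of length $\le\log 2$, so each width-$1/T$ window contains $\ll 1+X/(T\delta)$ indices $n$, while the number of $\gamma$ with $\rho(z,\gamma z)\ll 1$ is $O(1)$ uniformly (this is where the dependence on $y_{\Gamma}(z)$ enters). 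Combining yields $\sum_n|\mathcal{K}(x_m,x_n)|\ll T^2+XT\delta^{-1}$ and hence the claimed bound.

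The hard part is twofold. First, making the geometric realization of $\mathcal{K}$ rigorous: one must control the Harish--Chandra $c$-function and the remainder in the spherical-function asymptotics uniformly for $t$ up to $T$, so that $k_r$ really is a height-$T^2$, width-$1/T$ bump localized at $\rho\approx|r|$, with the Eisenstein contribution carried along in parallel. Second, the lattice-point counting in thin shells must be made uniform in $z$; it is precisely this step, balancing the shell width $1/T$ against the separation $\delta/X$, that forces the final bound to take the shape $T^2+XT\delta^{-1}$.
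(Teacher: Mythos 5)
The paper does not prove this statement: it is imported verbatim from Chamizo \cite{cham1}, so there is no internal proof to compare against. Your sketch is, in substance, the argument of the cited source: Chamizo likewise proves his large sieve by the duality principle, majorizes the sharp spectral cutoff by an admissible even test function, converts the resulting bilinear form into an automorphic kernel via the pre-trace formula, and then balances the diagonal (local Weyl law, giving $T^2$) against an off-diagonal count of $\delta/X$-separated points $\log x_n$ falling into shells of width $\sim 1/T$ around the distances $\rho(z,\gamma z)$, which is exactly where the shape $T^2+XT\delta^{-1}$ comes from. Three small points of precision. First, your majorant $h$ must be admissible for the pre-trace formula and nonnegative also at the exceptional spectral points $t_j\in i(0,1/2]$ (where $x_m^{it_j}$ is real, so those added terms are still nonnegative squares); a Fej\'er-type function with compactly supported Fourier transform works, but the naive $(\sin(\epsilon t/T)/(\epsilon t/T))^2$ decays too slowly for $\int h(t)\,t\tanh(\pi t)\,dt$ to converge, so one should take a higher power. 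Second, $k_r$ is not literally supported in an annulus of width $1/T$: it has tails on the inner side of the annulus, and ``height $\asymp T^2$'' is only an upper bound (the true peak for $|r|\gg 1/T$ is $O(T^{3/2})$); both are harmless, since the crude bound $T^2$ times the shell count already yields $T^2+XT\delta^{-1}$ and the tails cost at most a bounded factor, but a complete write-up must carry them through. Third, your statement that the number of $\gamma$ with $\rho(z,\gamma z)\ll 1$ is ``$O(1)$ uniformly'' is not right as written for cofinite groups: near a cusp this count grows like the invariant height, i.e.\ it is $O(1+y_{\G}(z))$, and this is precisely why the implied constant in the theorem depends on $y_{\G}(z)$. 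With these repairs your plan is a correct reconstruction of Chamizo's proof.
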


\begin{theorem} [Chamizo, \cite{cham1}]\label{lemmacham2} Given $T>1$ and $z_1, z_2,...z_R \in \GmodH$, if $ \rho(z_k,z_{\ell}) > \delta > 0$ for $k \neq \ell$, then
$$ \sum_{m=1}^{R} \left| \sum_{|t_j| \leq T} a_j u_j(z_m) + \sum_{\mathfrak{a}}  \frac{1}{4\pi} \int_{-T}^{T} a_{\mathfrak{a}}(t) E_{\mathfrak{a}}(z_m, 1/2 +it ) dt \right|^2$$
$$ \ll (T^2 + \delta^{-2}) \|a\|_{*}^2,$$
where $\|a\|_{*}$ is defined as above and 
and the  \lq{$\ll$\rq} constant depends on $\G$ and $\max y_{\G}(z_m)$.
\end{theorem}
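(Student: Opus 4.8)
The plan is to derive this spatial inequality from its dual spectral form by the duality principle of the large sieve, and then to control that dual form by the pre-trace formula on $\GmodH$. Concretely, write the left-hand side as $\|A\mathbf{a}\|_{\ell^2}^2$, where $A$ is the linear map sending a spectral datum $\mathbf{a}=(a_j;a_{\mathfrak a}(\cdot))$ to the vector whose $m$-th entry is $\sum_{|t_j|\le T}a_j u_j(z_m)+\sum_{\mathfrak a}\frac{1}{4\pi}\int_{-T}^{T}a_{\mathfrak a}(t)E_{\mathfrak a}(z_m,1/2+it)\,dt$, the domain carrying the norm $\|\cdot\|_{*}$ and the target $\mathbb{C}^R$ the $\ell^2$-norm. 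The asserted bound is exactly $\|A\|^2\ll T^2+\delta^{-2}$, which by $\|A\|=\|A^{*}\|$ is equivalent to the dual inequality
\[
\sum_{|t_j|\le T}\Big|\sum_{m=1}^{R}b_m u_j(z_m)\Big|^2+\sum_{\mathfrak a}\frac{1}{4\pi}\int_{-T}^{T}\Big|\sum_{m=1}^{R}b_m E_{\mathfrak a}(z_m,1/2+it)\Big|^2\,dt\ \ll\ (T^2+\delta^{-2})\sum_{m=1}^{R}|b_m|^2
\]
for every finite sequence $(b_m)$. The gain is that the inner sums now run over the points $z_m$, the shape to which spectral theory applies.

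Next I would majorize. Choose an even, nonnegative $h(t)$ with $h(t)\ge 1$ on $[-T,T]$ and rapid decay (for instance $h(t)=e^{-(t/T)^2}$, suitably normalised), and let $k(u)$ be the point-pair invariant whose Selberg--Harish-Chandra transform is $h$. Since $h\ge 0$, the two restricted spectral sums on the left are dominated by the full discrete sum weighted by $h(t_j)$ together with the full Eisenstein integral weighted by $h(t)$, and the pre-trace formula identifies this with a geometric sum:
\[
(\text{dual left-hand side})\ \le\ \sum_{m,n}b_m\overline{b_n}\,K(z_m,z_n),\qquad K(z,w)=\sum_{\gamma\in\Gamma}k\big(u(z,\gamma w)\big).
\]
As $K(z_m,z_n)$ is Hermitian, a Schur (maximal row-sum) test bounds its operator norm by $\max_m\sum_n|K(z_m,z_n)|\le \max_m\sum_{w}\big|k(u(z_m,w))\big|$, the last sum taken over the $\delta$-separated orbit set $\bigcup_n\Gamma z_n\subset\mathbb{H}$.

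It then remains to estimate this weighted lattice sum. The diagonal term $w=z_m$ contributes $k(0)\asymp T^2$, producing the $T^2$ summand. For the remaining points I would use that a $\delta$-separated set has $\ll 1+\delta^{-2}e^{r}$ points in a hyperbolic ball of radius $r$; integrating the profile of $k$, which peaks at height $\asymp T^2$ on the scale $r\asymp T^{-1}$ and decays rapidly beyond, against this density yields a total $\ll T^2+\delta^{-2}$, the two summands corresponding to the regimes $\delta\ge T^{-1}$ and $\delta\le T^{-1}$. Undoing the duality returns the claimed inequality with the same constant.

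I expect the main obstacle to be the uniformity of this lattice-point count, which is precisely what forces the dependence of the implied constant on $\max y_{\Gamma}(z_m)$: when some $z_m$ lies deep in a cusp the parabolic translates $\gamma z_n$ accumulate and the estimate $\ll 1+\delta^{-2}e^{r}$ for the number of orbit points in a ball degrades, so one must either keep the points at bounded invariant height or track the height explicitly in the counting. A secondary and more routine difficulty is engineering $h$ so that simultaneously $h\ge 1$ on $[-T,T]$, the peak value $k(0)\asymp T^2$ comes out correctly, and the spatial tails of $k$ decay fast enough to be summed against the $\delta$-separated set uniformly in both regimes; this is where a careful analysis of the Selberg--Harish-Chandra transform is needed.
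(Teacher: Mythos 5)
This statement is quoted from Chamizo \cite{cham1} and the paper contains no proof of it --- it is imported as a black box and applied in Section \ref{averagingresults} --- so there is no internal argument to compare against. Your sketch is, in essence, a correct reconstruction of Chamizo's original proof: duality to pass to the bilinear form $\sum_{m,n} b_m \overline{b_n} K(z_m,z_n)$, majorization of the truncated spectral sums by a nonnegative transform $h$ via the pre-trace formula, and a Schur-type bound reduced to counting $\delta$-separated orbit points against the profile of $k$; you also correctly identify that the accumulation of orbit points in the cusps (the degradation of the injectivity radius) is exactly what forces the implied constant to depend on $\max y_{\Gamma}(z_m)$. Two small refinements worth recording: the majorant must dominate the spectrum at the imaginary spectral parameters $t_j \in i(0,1/2]$ of the small eigenvalues as well (your Gaussian does, since $e^{-(iv/T)^2} \geq 1$), and the tail summation is cleaner if one takes $h$ of the Fej\'er type $\left( \frac{\sin(t/T)}{t/T} \right)^2$, whose inverse transform $k$ is supported in $\rho \ll T^{-1}$, so that only the ball of radius $\asymp T^{-1}$ enters the count and the dichotomy $\delta \gtrless T^{-1}$ yields $T^2 + \delta^{-2}$ without any decay-versus-volume-growth bookkeeping.
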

We will use Theorem \ref{lemmacham1} to prove the following result for the radial averaging of $E(\mathcal{H}, X;z)$.
\begin{proposition} \label{averaging1} Let $X >2$ and $X_1, X_2,..., X_R \in [X,2X]$, satisfying the condition $|X_i - X_j| > \delta$ for some $\delta>0$, when $ i \neq j$. Then we have
$$\sum_{m=1}^{R} |E(\mathcal{H}, X_m ; z)|^2 \ll  R^{1/3} X^{4/3} \log X + \delta^{-1} X^2 \log^2 X ,$$
where the  \lq{$\ll$\rq} constant depends on $\G$, $\mathcal{H}$ and $z$.
\end{proposition}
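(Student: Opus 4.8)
The plan is to reduce the estimate to a single application of Chamizo's large sieve (Theorem \ref{lemmacham1}) to the oscillatory part of the spectral expansion, after a dyadic decomposition of the spectrum and a careful choice of the smoothing width $Y$.

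First I would set up the reduction to an oscillatory sum. Since $A(f^-)(z)\le N(\mathcal{H},X;z)\le A(f^+)(z)$ and $E=N-M$, the spectral identity \eqref{afestimate} — which holds in the cofinite case as well once the Eisenstein contribution is included and estimated by Lemmas \ref{eisensteinlemma1} and \ref{eisensteinlemma2} — gives, with $X$ set to $X_m$,
$$|E(\mathcal{H},X_m;z)|\ll |G(f^+,z)|+|G(f^-,z)|+Y+X^{1/2}\log X,$$
where $G(f^\pm,z)=\sum_{0\neq t_j\in\mathbb{R}}2d(f^\pm,t_j)\hat u_j u_j(z)$ is the oscillatory sum (together with the corresponding Eisenstein integral). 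Squaring and summing over $m$, it suffices to bound $\sum_{m}|G(f^+,z)|^2$, since the remaining terms contribute $R(Y^2+X\log^2 X)$; the function $f^-$ is treated identically.

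Second, I would bring in the oscillation and invoke the large sieve. By Proposition \ref{estimates1}(b) I write, for real $t_j$, $d(f^+,t_j)=a(t_j)X_m^{1/2+it_j}+b(t_j)X_m^{1/2-it_j}$ with $a(t_j),b(t_j)\ll t_j^{-2}\min\{t_j,XY^{-1}\}$; pulling out the harmless factor $X_m^{1/2}\ll X^{1/2}$ isolates the genuinely oscillatory $X_m^{\pm it_j}$ to which Theorem \ref{lemmacham1} applies. I then decompose the spectrum into dyadic blocks $N\le |t_j|<2N$ and apply Theorem \ref{lemmacham1} to each block with coefficients $a_j=2\hat u_j a(t_j)$ (and $a_{\mathfrak a}(t)=2\hat E_{\mathfrak a}(1/2+it)a(t)$ at the cusps). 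Lemmas \ref{lemmahuber2} and \ref{eisensteinlemma2} bound the block norm by $\|a\|_*^2\ll N\cdot\big(N^{-2}\min\{N,XY^{-1}\}\big)^2=N^{-3}\min\{N,XY^{-1}\}^2$, so each block contributes $X(N^2+XN\delta^{-1})\|a\|_*^2$.

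Third, I would sum the blocks and optimize. The diagonal pieces $XN^2\|a\|_*^2$ equal $XN$ for $N\le XY^{-1}$ and $X^3Y^{-2}N^{-1}$ for $N>XY^{-1}$; both geometric series peak at the transition $N\asymp XY^{-1}$ and sum to $O(X^2Y^{-1})$, while the off-diagonal pieces $X^2N\delta^{-1}\|a\|_*^2$ equal $X^2\delta^{-1}$ on each low block and decay geometrically on the high ones, summing to $O(X^2\delta^{-1}\log X)$. A Cauchy--Schwarz over the $O(\log X)$ relevant dyadic blocks (the part beyond a polynomial truncation being negligible by the $t^{-2}$-decay together with Lemma \ref{lemmahuber2}) yields $\sum_m|G(f^+,z)|^2\ll X^2Y^{-1}\log X+X^2\delta^{-1}\log^2 X$. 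Combining with the smoothing error $R(Y^2+X\log^2 X)$, choosing $Y=X^{2/3}R^{-1/3}$ to balance $X^2Y^{-1}\log X$ against $RY^2$, and using the spacing constraint $R\ll X\delta^{-1}$ to absorb $RX\log^2 X$ into $\delta^{-1}X^2\log^2 X$, gives exactly $R^{1/3}X^{4/3}\log X+\delta^{-1}X^2\log^2 X$.

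The main obstacle is the bookkeeping of the high-frequency part of the spectrum: one must show that the blocks with $|t_j|>XY^{-1}$ and the truncation tail do not dominate. This is precisely where the decay $d(f^+,t)\ll t^{-2}\min\{t,XY^{-1}\}X^{1/2}$ of Proposition \ref{estimates1}(b) is indispensable, since it makes the block norms $\|a\|_*^2$ summable and forces the diagonal contribution to concentrate near $N\asymp XY^{-1}$ rather than grow with the truncation level. The secondary delicacy is tracking the logarithmic losses so that the off-diagonal term emerges as $\delta^{-1}X^2\log^2 X$ and no worse.
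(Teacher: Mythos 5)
Your proposal is correct and takes essentially the same route as the paper's own proof: the sandwich $A(f^-)\le N\le A(f^+)$, the oscillatory form $d(f^\pm,t)=a\,X^{1/2+it}+b\,X^{1/2-it}$ with $a,b\ll t^{-2}\min\{t,XY^{-1}\}$ from Proposition \ref{estimates1}(b), dyadic blocks fed into Theorem \ref{lemmacham1} with the block norm $\|a\|_*^2\ll XT^{-3}\min\{T^2,X^2Y^{-2}\}$ via Lemmas \ref{lemmahuber2} and \ref{eisensteinlemma2}, Cauchy--Schwarz over the $O(\log X)$ blocks, and the optimization $Y=X^{2/3}R^{-1/3}$. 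The only (harmless) deviations are bookkeeping: you truncate the spectrum at an unspecified polynomial height rather than exactly at $X^2Y^{-2}$ as in the paper's sets $A_3,B_3$, and you absorb the residual $RX\log^2X$ term through the spacing bound $R\ll\delta^{-1}X$ instead of imposing $X^{1/2}\log X\ll Y$ at the outset.
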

\begin{theorem} \label{integral1} If $R \delta \gg X$ and $R> X^{1/2}$, then
\begin{equation} \label{riemannsumeradius}
\frac{1}{R} \sum_{m = 1}^{R} |E(\mathcal{H}, X_m ; z)|^2 \ll X \log^2 X.
\end{equation}
Letting $R$ go to infinity, we get
\begin{equation} \label{integralradius}
\frac{1}{X} \int_{X}^{2X} |E(\mathcal{H}, x  ; z)|^2 dx  \ll  X \log ^2 X.
\end{equation}
\end{theorem}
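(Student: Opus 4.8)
The plan is to feed the discrete bound of Proposition \ref{averaging1} into a normalization-and-limit argument. Since Proposition \ref{averaging1} already packages all the spectral and large-sieve input, what remains is elementary. First I would divide the estimate of Proposition \ref{averaging1} by $R$ to obtain
$$\frac{1}{R}\sum_{m=1}^R |E(\mathcal{H}, X_m ; z)|^2 \ll R^{-2/3} X^{4/3}\log X + (R\delta)^{-1} X^2 \log^2 X.$$
Now I invoke the two hypotheses. From $R > X^{1/2}$ we get $R^{-2/3} < X^{-1/3}$, so the first term is $\ll X^{4/3-1/3}\log X = X\log X$. From $R\delta \gg X$ we get $(R\delta)^{-1} \ll X^{-1}$, so the second term is $\ll X^{2-1}\log^2 X = X\log^2 X$. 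Adding the two gives exactly \eqref{riemannsumeradius}, with the implied constant inherited from Proposition \ref{averaging1} and hence depending only on $\Gamma$, $\mathcal{H}$ and $z$.

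For the integral bound \eqref{integralradius} I would specialize to the equally-spaced sampling $X_m = X + (m-1)X/R$ for $m = 1,\dots,R$, so that $\delta = X/R$ and therefore $R\delta = X$, which meets the hypothesis $R\delta \gg X$; the condition $R > X^{1/2}$ holds once $R$ is large. With this choice $1/R = \delta/X$, so the left-hand side of \eqref{riemannsumeradius} equals
$$\frac{1}{X}\sum_{m=1}^R |E(\mathcal{H}, X_m ; z)|^2\, \delta,$$
which is $1/X$ times a Riemann sum for $\int_{X}^{2X} |E(\mathcal{H}, x ; z)|^2\, dx$. Crucially, the bound $\ll X\log^2 X$ holds uniformly in $R$, since the constant in Proposition \ref{averaging1} does not depend on $R$ or $\delta$. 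Letting $R \to \infty$ and using convergence of the Riemann sums then yields \eqref{integralradius}.

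The only point needing justification — and the main (mild) obstacle — is the convergence of these Riemann sums, i.e. the Riemann integrability of $x \mapsto |E(\mathcal{H}, x ; z)|^2$ on $[X,2X]$. This I would settle by observing that $N(\mathcal{H}, x ; z)$ is a nondecreasing step function of $x$ (a counting function) with only finitely many jumps in any bounded interval, whereas the main term $M(\mathcal{H}, x ; z)$ of \eqref{main-term} is a finite sum of continuous powers $x^{s_j}$; hence $E = N - M$ is bounded and piecewise continuous on $[X,2X]$, so $|E|^2$ is Riemann integrable and the sampled sums converge to the stated integral. Everything else is the bookkeeping carried out above, so the theorem follows at once from Proposition \ref{averaging1}.
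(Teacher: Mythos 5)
Your proposal is correct and follows essentially the same route as the paper: divide the bound of Proposition \ref{averaging1} by $R$, use $R>X^{1/2}$ and $R\delta\gg X$ to bound the two terms by $X\log^2 X$, then take equally spaced points $X_m$ with $\delta=X/R$ and let $R\to\infty$ so the Riemann sums converge to the integral. Your explicit justification of the Riemann integrability of $|E(\mathcal{H},\cdot\,;z)|^2$ (the counting function $N$ is a step function with finitely many jumps on $[X,2X]$, while $M$ is a finite sum of continuous powers) is a detail the paper leaves implicit, but it does not change the argument.
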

For the spatial average, we use Theorem \ref{lemmacham2} to prove the following.
\begin{proposition} \label{averaging2} Let $X >2$ and $z_1, z_2,..., z_R$ be points in $\Gamma \backslash \mathbb{H}$ away from the cusps, satisfying the condition $\rho(z_i, z_j) > \delta$ for some $\delta>0$, when $ i \neq j$. Then, we have
\begin{equation} \label{spatialsecond}
 \sum_{m=1}^{R} |E(\mathcal{H}, X ; z_m)|^2 \ll \delta^{-2} X + R^{1/3} X^{4/3} \log^2 X,
\end{equation}
and
\begin{equation} \label{spatialfourth} \sum_{m=1}^{R} |E(\mathcal{H}, X ; z_m)|^4 \ll \delta^{-2} X^2 \log^{4} X + R^{1/3} X^{8/3} \log^{3} X,
\end{equation}
where the  \lq{$\ll$\rq} constants depend on $\G$, $\mathcal{H}$ and $z$.
\end{proposition}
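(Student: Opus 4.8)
The plan is to bound the spatial moments via Chamizo's large sieve (Theorem \ref{lemmacham2}), after reducing $E(\mathcal H,X;z)$ to the non-exceptional part of the spectral expansion. I would start from \eqref{afestimate} together with its Eisenstein analogue from Section \ref{cofinitecase}, which give, uniformly in $z$,
\[
A(f^{\pm})(z) = M(\mathcal H,X;z) + G(f^{\pm},z) + O(Y + X^{1/2}\log X),
\]
where $G(f^{\pm},z)=\sum_{0\neq t_j\in\mathbb R} 2 d(f^{\pm},t_j)\hat u_j u_j(z)$ plus the matching Eisenstein integral collects the contribution of the tempered spectrum. Since $A(f^-)(z)\le N(\mathcal H,X;z)\le A(f^+)(z)$ and $E=N-M$, this yields $|E(\mathcal H,X;z)|\le \max\{|G(f^+,z)|,|G(f^-,z)|\} + O(Y + X^{1/2}\log X)$. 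Hence $\sum_m |E(\mathcal H,X;z_m)|^{2n}$ is controlled by $\sum_m |G(f^{\pm},z_m)|^{2n}$ plus the uniform window contribution $R\,(Y + X^{1/2}\log X)^{2n}$; because the $z_m$ lie away from the cusps the heights $y_\Gamma(z_m)$ stay bounded, so Theorem \ref{lemmacham2} applies with a constant uniform in $m$.

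For the second moment I would decompose $G(f^{\pm},z)$ dyadically in the spectral parameter, writing $G=\sum_N G_N$ with $G_N$ supported on $N\le|t_j|<2N$ (and the corresponding range of the Eisenstein integral). The large-sieve coefficients are $a_j=2d(f^{\pm},t_j)\hat u_j$ and $a_{\mathfrak a}(t)=2d(f^{\pm},t)\hat E_{\mathfrak a}(1/2+it)$; Proposition \ref{estimates1}(b) gives $|d(f^{\pm},t)|^2\ll t^{-4}\min(t,X/Y)^2 X$, so Lemmas \ref{lemmahuber2} and \ref{eisensteinlemma2} yield $\|a\|_*^2\ll X N^{-3}\min(N,X/Y)^2$ on each block. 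Applying Theorem \ref{lemmacham2} with truncation $T=2N$ gives $\sum_m|G_N(z_m)|^2\ll (N^2+\delta^{-2})X N^{-3}\min(N,X/Y)^2$, and summing the square roots over the dyadic scales by Minkowski's inequality produces $\big(\sum_m|G(f^{\pm},z_m)|^2\big)^{1/2}\ll X Y^{-1/2}+\delta^{-1}X^{1/2}$. Combining with the window term $RY^2$ and choosing $Y=(X^2/R)^{1/3}$ to balance $X^2Y^{-1}$ against $RY^2$ yields $R^{1/3}X^{4/3}+\delta^{-2}X$, which is \eqref{spatialsecond} up to the powers of $\log X$ coming from the $t_j=0$ term (Proposition \ref{estimates1}(d)) and from the packing bound $R\ll\delta^{-2}$.

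For the fourth moment the difficulty is that Theorem \ref{lemmacham2} is only an $L^2$ inequality, so no fourth-moment sieve is directly available. I would keep the dyadic decomposition and apply Hölder's inequality over the $O(\log X)$ relevant scales, $\sum_m|G(z_m)|^4\ll(\log X)^3\sum_N\sum_m|G_N(z_m)|^4$, and then bound each block by the interpolation $\sum_m|G_N(z_m)|^4\le\big(\sup_z|G_N(z)|^2\big)\sum_m|G_N(z_m)|^2$. The pointwise factor is handled by Cauchy--Schwarz and the local Weyl law $\sum_{N\le t_j<2N}|u_j(z)|^2\ll N^2$ (\cite[Prop.~7.2]{iwaniec}) and its Eisenstein counterpart, giving $\sup_z|G_N(z)|^2\ll N^2\|a\|_*^2$, while the residual $\sum_m|G_N(z_m)|^2$ is again supplied by the large sieve. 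Summing over $N$ and optimizing $Y$ (balancing $X^4Y^{-2}$ against $RY^4$, i.e.\ $Y=(X^4/R)^{1/6}$) then gives the two terms of \eqref{spatialfourth}.

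The main work, and the delicate point, is the bookkeeping separating the two regimes in each bound: the $\delta$-dependent term, arising from the $\delta^{-2}$ part of the large sieve combined with $R\ll\delta^{-2}$, and the $R$-dependent term $R^{1/3}X^{4n/3}$, arising from balancing the spectral tail (of size $X^2Y^{-1}$ for $n=1$ and $X^4Y^{-2}$ for $n=2$) against the window contribution ($RY^2$, respectively $RY^4$). Preventing the uniform window error $O(Y+X^{1/2}\log X)$ from swamping the $R^{1/3}$ term is exactly what forces the precise powers of $\log X$ and the optimal choice of $Y$; by contrast the spectral estimates are already furnished by Proposition \ref{estimates1} and, as in the proof of Theorem \ref{theorempart2}, are insensitive to whether $\Gamma$ is cocompact or merely cofinite, so the same argument covers both cases.
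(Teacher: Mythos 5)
Your proposal is correct, and while it follows the paper's overall skeleton (test functions $f^{\pm}$, reduction to the tempered part with window error $O(Y+X^{1/2}\log X)$, dyadic blocks, the coefficient bound $\|a\|_*^2 \ll XT^{-3}\min\{T^2,X^2Y^{-2}\}$ of \eqref{boundofnorma} via Proposition \ref{estimates1}(b) and Lemmas \ref{lemmahuber2}, \ref{eisensteinlemma2}, then Chamizo's spatial sieve and optimization of $Y$), it genuinely diverges at the two bookkeeping steps. For the second moment the paper does not sum blocks by Minkowski: it applies weighted Cauchy--Schwarz to the dyadic sum with the two-sided weights $c_T=\min\left\{\log (T^{-1}X^2Y^{-2})+1,\,\log T+1\right\}$ (inequalities \eqref{bound4}, \eqref{bound7}, \eqref{bound8}), which damp both ends of the dyadic range so that the $\delta^{-2}$-part sums to $\delta^{-2}X$ with no log while the middle term carries $\log^2 X$; your $\ell^2$-Minkowski summation of square roots over the scales achieves the same end --- indeed it even removes the $\log^2 X$ from the $X^2Y^{-1}$ term, since the block amplitudes decay geometrically away from the peak $N\asymp X/Y$ --- so both routes yield \eqref{spatialsecond}. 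For the fourth moment the paper simply quotes the fourth-moment large sieve \eqref{bound10}, $\ll (T^4+T^2\delta^{-2})\|a\|_*^4$, from Chamizo's unpublished thesis \cite{cham0}, whereas your interpolation $\sum_m|G_N(z_m)|^4\le \bigl(\max_m|G_N(z_m)|^2\bigr)\sum_m|G_N(z_m)|^2$, with the pointwise factor controlled by Cauchy--Schwarz and the local Weyl law ($\ll N^2$ per block, uniformly for points of bounded $y_{\Gamma}$, which the hypothesis that the $z_m$ stay away from the cusps guarantees), gives exactly $(N^4+N^2\delta^{-2})\|a\|_*^4$ per block: you thereby rederive precisely the special case of \eqref{bound10} that is used, making the argument self-contained where the paper leans on an external reference, at no cost in the exponents. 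Your H\"older step over the $O(\log X)$ scales matches \eqref{bound9}, your choices $Y=(X^2/R)^{1/3}$ and $Y=(X^4/R)^{1/6}$ reproduce the paper's optimization, and the caveat you flag about the window term $O(Y+X^{1/2}\log X)$ versus the requirement $Y\gg X^{1/2}\log X$ in extreme ranges of $R$ and $\delta$ is equally implicit in the paper's own proof and is harmless in the regime $\delta^{-2}\ll R$, $R>X^{1/2}$ where Theorem \ref{integral2} is applied.
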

\begin{theorem} \label{integral2} If $R \delta^2 \gg 1$ and $R> X^{1/2}$, then, for $n=1,2$
$$\frac{1}{R} \sum_{m= 1}^{R} |E(\mathcal{H}, X ; z_m)|^{2n} \ll X^n \log^{2n} X,$$
Letting $R$ go to infinity, if $\G$ is cocompact, we get
$$ \int_{\GmodH} |E(\mathcal{H}, X ; z)|^{2n} d\mu(z)  \ll  X^{n} \log ^{2n} X.$$
\end{theorem}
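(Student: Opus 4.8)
The plan is to derive both estimates straight from Proposition \ref{averaging2} by dividing through by $R$, and then to send $R\to\infty$ so that the discrete averages become the integral over $\GmodH$. Dividing the two bounds of Proposition \ref{averaging2} by $R$ gives
$$\frac{1}{R}\sum_{m=1}^{R}|E(\mathcal{H},X;z_m)|^2 \ll \frac{\delta^{-2}}{R}\,X + R^{-2/3}X^{4/3}\log^2 X$$
and
$$\frac{1}{R}\sum_{m=1}^{R}|E(\mathcal{H},X;z_m)|^4 \ll \frac{\delta^{-2}}{R}\,X^2\log^4 X + R^{-2/3}X^{8/3}\log^3 X.$$
The hypothesis $R\delta^2\gg 1$ makes $\delta^{-2}/R$ bounded, so on each line the first term is already $O(X^n\log^{2n}X)$ for $n=1,2$ respectively.

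For $n=1$ I would finish immediately: since $R>X^{1/2}$ we have $R^{-2/3}<X^{-1/3}$, whence $R^{-2/3}X^{4/3}\log^2 X<X\log^2 X$, and the finite-sum bound $\frac{1}{R}\sum|E|^2\ll X\log^2 X$ follows. For $n=2$ the leftover term is $R^{-2/3}X^{8/3}\log^3 X$; with $X$ fixed this tends to $0$ as $R\to\infty$, so although it is not fully absorbed into $X^2\log^4 X$ for $R$ merely of size $X^{1/2}$, it becomes harmless once we pass to the limit.

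The step I expect to be the main obstacle is justifying the passage from the discrete averages to the integral in the cocompact case. Here I would fix $X$ and take $z_1,\dots,z_R$ to be the nodes of an increasingly fine, equidistributing net of a compact fundamental domain with $\delta\asymp R^{-1/2}$, so that $R\delta^2\asymp\vol(\GmodH)$ keeps $R\delta^2\gg1$ in force while $R\to\infty$. The integrand $z\mapsto|E(\mathcal{H},X;z)|^{2n}$ is bounded and continuous off a null set: the main term $M(\mathcal{H},X;z)$ is a finite combination of eigenfunctions, hence smooth, while $N(\mathcal{H},X;z)$ is locally constant away from the measure-zero set on which $\sinh(\rho(z,\gamma z)/2)/\sinh(\mu/2)=X$ for some $\gamma\in\mathcal{H}$. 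Thus $|E|^{2n}$ is Riemann integrable and $\frac{1}{R}\sum_m|E(\mathcal{H},X;z_m)|^{2n}\to\vol(\GmodH)^{-1}\int_{\GmodH}|E(\mathcal{H},X;z)|^{2n}\,d\mu(z)$. In this limit the term $R^{-2/3}(\cdots)$ disappears and only the bound $\ll X^n\log^{2n}X$ remains, which yields the stated integral estimates for $n=1,2$.
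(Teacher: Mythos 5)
Your proof is correct and is essentially the paper's own argument: divide the two bounds of Proposition \ref{averaging2} by $R$, absorb the first terms using $R\delta^2 \gg 1$, handle the second term for $n=1$ via $R>X^{1/2}$, and pass to the limit over increasingly fine nets with $\delta \asymp R^{-1/2}$ — and your justification of the Riemann-sum convergence, which the paper omits entirely, is sound, since in the cocompact case $|E(\mathcal{H},X;\cdot)|^{2n}$ is bounded and discontinuous only on the null set where $\sinh(\rho(z,\gamma z)/2)=X\sinh(\mu/2)$ for some $\gamma\in\mathcal{H}$. Your observation about $n=2$ is moreover a genuine catch: $R^{-2/3}X^{8/3}\log^3 X \ll X^2\log^4 X$ requires $R\gg X\log^{-3/2}X$, so the finite-$R$ statement for $n=2$ does not follow when $R$ is merely of size $X^{1/2}$ — the paper hides this behind ``the $n=2$ case follows in exactly the same way,'' while the integral bound of Theorem \ref{average2} survives in the limit $R\to\infty$ exactly as you argue.
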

Before giving the proof of the above results, we need to fix the following notation. For a function $f \in C_0^*[1, \infty)$, denote by  $E_{f}(\mathcal{H}, X; z)$ the difference 
$$E_{f}(\mathcal{H}, X; z) = A(f)(z) - \sum_{1/2 \leq s_j \leq 1} 2 d(f, t_j) \hat{u}_j u_j(z).$$
In the proofs of Theorems \ref{theorempart1} in section \ref{cocompactcase} and \ref{theorempart2} in section \ref{cofinitecase} we proved that for $\G$ cocompact or cofinite we have
\begin{equation}
E_{f^+}(\mathcal{H}, X; z)  = O(XY^{-1/2} + X^{1/2} ),
\end{equation}
\begin{equation}
E_{f^-}(\mathcal{H}, X; z)  = O(XY^{-1/2} + X^{1/2} ),
\end{equation}
\begin{equation}
 E_{f^-}(\mathcal{H}, X; z) <   E(\mathcal{H}, X; z) + O(Y + X^{1/2}\log X) <  E_{f^+}(\mathcal{H}, X; z).
\end{equation}
We begin with the proof of Proposition \ref{averaging1}.
\begin{proof} (of Proposition  \ref{averaging1}) We choose $Y$ such that $X^{1/2} \log X \ll Y \ll X$. We get
 $$E_{f^-}(\mathcal{H}, X; z) <   E(\mathcal{H}, X; z) + O(Y) <  E_{f^+}(\mathcal{H}, X; z).$$
We choose $f$ to be $f^+$ or $f^-$ as in (\ref{fplus}) and (\ref{fminus}) with $X = X_m$ and $U$ given by (\ref{uversusx}). We have
$$\sum_{m=1}^{R} |E(\mathcal{H}, X_m ; z)|^2 \ll \sum_{m=1}^{R} |E_f (\mathcal{H}, X_m ; z)|^2 + RY^2.$$
The estimates below are true for $f=f^+$ or $f^-$. We write 
\begin{eqnarray*}
S(X, z, T)  &=& 2 \sum_{T < |t_j| \leq 2T} d(f, t_j) \hat{u}_j u_j(z) \\
&&+ \frac{1}{\pi} \sum_{\mathfrak{a}} \left( \int_{T}^{2T} + \int_{-2T}^{-T} \right)  d(f, t)  \hat{ E}_{\mathfrak{a}} (1/2 + it )  E_{\mathfrak{a}} (z, 1/2 + it ) dt.
\end{eqnarray*}
We now break the set of $t_j, t$'s in the following sets
\begin{eqnarray*}
A_1 &=& \{t_j:  0< |t_j| \leq 1\}, \\
B_1 &=& \{t : 0< |t| \leq 1\}, \\
A_2 &=& \{t_j: 1 < |t_j|, \leq X^2 Y^{-2} \},\\
B_2 &=& \{t : 1 < |t| \leq X^2 Y^{-2} \},\\
A_3 &=& \{t_j :  |t_j| >  X^2Y^{-2} \},\\
B_3 &=& \{t :  |t| >  X^2Y^{-2} \}.\\
\end{eqnarray*}
Using the notation
$$ S_i(z) := 2 \sum_{t_j \in A_i}d(f, t_j) \hat{u}_j u_j(z) + \frac{1}{\pi} \sum_{\mathfrak{a}} \int_{B_i}d(f, t)  \hat{ E}_{\mathfrak{a}} (1/2+ it )  E_{\mathfrak{a}} (z, 1/2 + it ) dt, $$
$ E_f(\mathcal{H}, X ; z)$ can be written as
\begin{eqnarray*}
 E_f(\mathcal{H}, X ; z) = S_{1}(z) + S_{2}(z) + S_{3}(z).
\end{eqnarray*}
We first estimate $S_3(z)$. Using the estimates for $t_j \in \mathbb{R}$, we get the bound
\begin{eqnarray*}
\sum_{t_j \in A_3} 2 d(f, t_j) \hat{u}_j u_j(z) &\ll& \sum_{|t_j| > X^2Y^{-2} }|t_j|^{-2} \min\{t_j, X/Y \} X^{1/2} \hat{u}_j u_j(z) \\
&\ll& \sum_{t_j > X^2Y^{-2} }t_j^{-2} X^{3/2} Y^{-1} \hat{u}_j u_j(z).
\end{eqnarray*}
Using dyadic decomposition, this is bounded by
\begin{eqnarray*}
X^{3/2}Y^{-1} \sum_{n=0}^{\infty} \left(\sum_{2^n X^2 Y^{-2} < t_j \leq 2^{n+1} X^2 Y^{-2}} t_j^{-2} \hat{u}_j u_j(z) \right)
\end{eqnarray*}
and hence trivially bounded by
\begin{eqnarray*}
&\ll&X^{3/2}Y^{-1} \sum_{n=0}^{\infty} 2^{-2n} X^{-4} Y^4 \left( \sum_{2^n X^2 Y^{-2} < t_j \leq 2^{n+1} X^2 Y^{-2}}  \hat{u}_j u_j(z) \right) .
\end{eqnarray*}
Using Cauchy-Schwarz, Proposition $[7.2]$ of \cite{iwaniec} and Lemma \ref{lemmahuber2} we get the bound
\begin{eqnarray*}
&\ll&X^{-5/2} Y^{3}\sum_{n=0}^{\infty}2^{-2n} \left( \sum_{ t_j \leq 2^{n+1} X^2 Y^{-2}} |\hat{u}_j|^2 \right)^{1/2}
\left( \sum_{ t_j \leq 2^{n+1} X^2 Y^{-2}} |u_j(z)|^2 \right)^{1/2} \\
&\ll&X^{-5/2} Y^{3}  \sum_{n=0}^{\infty} 2^{-2n} (2^{n/2} XY^{-1}) (2^n X^2 Y^{-2}) \ll X^{1/2} \ll Y.
\end{eqnarray*}
Similarly we deal with the case of the Eisenstein series over $B_3$. We conclude $S_3(z) = O(Y).$ 

 We now consider $S_1(z)$. We get
$$\sum_{t_j \in A_1} d(f, t_j) \hat{u}_j u_j(z) \ll X^{1/2} \sum_{|t_j|< 1} t_j^{-2} \min\{t_j, X/Y\} \hat{u}_j u_j(z) \ll X^{1/2} \ll Y,$$
since there exist finitely many eigenvalues with spectral parameter $|t_j| \leq 1$. Similarly, we prove the $O(Y)$ bound for the Eisenstein series' contribution over $B_1$. We conclude that $ S_{1}(z) = O(Y).$
Combining all the above we get
\begin{eqnarray*}
E_f(\mathcal{H}, X; z) &=& 2 \sum_{t_j \in A_2}d(f, t_j) \hat{u}_j u_j(z) \\
&&+ \frac{1}{\pi} \sum_{\mathfrak{a}} \int_{A_2}d(f, t)  \hat{ E}_{\mathfrak{a}} ( 1/2+ it )  E_{\mathfrak{a}} (z, 1/2 + it ) dt + O(Y).
\end{eqnarray*}
Adding for $T=2^k$, $ k = 0, 1, \ldots, [\log_2 (X^2 Y^{-2})]$, we get the bound
$$E_f(\mathcal{H}, X; z) \ll \sum_{1\leq T < X^2 Y^{-2}} S(X, z, T) + O(Y),$$
and, adding for $X_1, ... X_R$, we get
\begin{equation}\label{bound}
\sum_{m=1}^{R} \left| E_f(\mathcal{H}, X_m; z) \right|^2 \ll \sum_{m=1}^{R} \left| \sum_{1\leq T < X^2 Y^{-2}} S(X_m , z, T) \right|^2 + RY^2.
\end{equation}
Cauchy-Schwarz inequality now yields
\begin{equation}\label{bound2}
\left| \sum_{ 1\leq T < X^2 Y^{-2}} S( X_m , z, T) \right|^2 \ll \log X  \sum_{1\leq T < X^2 Y^{-2}} |S ( X_m , z, T)|^2 
\end{equation}
which, combinded with the bound (\ref{bound}) gives 
\begin{equation}\label{bound3}
\sum_{m=1}^{R} \left| E_f(\mathcal{H}, X_m; z) \right|^2 \ll \log X \sum_{1\leq T < X^2 Y^{-2}}\left( \sum_{m=1}^{R} |S( X_m, z, T) |^2 \right)+ RY^2.
\end{equation}
Using the estimates of Proposition \ref{estimates1} we can now write
$$d(f,t) = X^{1/2} (a(t,Y/X) X^{it} + b(t,Y/X)  X^{-it})$$
where $a(t, Y/X)$ and $b(t, Y/X)$ are functions satisfying
$$a(t, Y/X), b(t,Y/X) \ll |t|^{-2} \min \left\{|t|, XY^{-1} \right\}.$$
We apply Theorem \ref{lemmacham1}, which implies that, for $a_j = d(f,t_j)$ and $a(t) = d(f,t)$
$$\sum_{m=1}^{R} \left|\sum_{T< |t_j| \leq 2T} d(f, t_j) \hat{u}_j u_j(z) + \frac{1}{\pi} \sum_{\mathfrak{a}} \int_{T}^{2T} d(f,t) \hat{E}_{\mathfrak{a}}(1/2 + it) E_{\mathfrak{a}}(z, 1/2 +it ) dt\right|^2 $$
is bounded by
$$ (T^2 + XT\delta^{-1}) \|a\|_{*}^2,$$
i.e.
\begin{eqnarray*}
\sum_{m=1}^{R} |S( X_m,z, T)|^2 \ll  (T^2 + XT\delta^{-1}) \|a\|_{*}^2,
\end{eqnarray*}
where
\begin{eqnarray*}
\|a\|_{*}^2 &\ll& 
\sum_{T < |t_j| \leq 2T} \Big||t_j|^{-2} \min\{|t_j|, X Y^{-1} \}X^{1/2} \hat{u}_j \Big|^2 \\
 &&+ \frac{1}{\pi} \sum_{\mathfrak{a}} \int_{T}^{2T} \left||t|^{-2} \min\{|t|, X Y^{-1} \}X^{1/2} \hat{E}_{\mathfrak{a}}(1/2+it) \right|^2.
\end{eqnarray*}
The last expression can be bounded by
\begin{eqnarray*}
 X  T^{-4} \min\{T^2, X^2 Y^{-2}  \} \left( \displaystyle \sum_{T \leq |t_j| \leq 2T} |\hat{u_j}|^2 +
\sum_{\mathfrak{a}} \int_{T}^{2T} |\hat{E}_{\mathfrak{a}}(1/2+it)|^2 dt \right) 
\end{eqnarray*}
and, using Lemma \ref{lemmahuber2}, we obtain
\begin{equation} \label{boundofnorma}
\|a\|_{*}^2 \ll XT^{-3} \min\{T^2, X^2 Y^{-2}  \}.
\end{equation}
Thus, we conclude 
$$\sum_{m=1}^{R} |S(X_m , z, T)|^2 \ll (T^2 + XT\delta^{-1} ) (XT^{-3} \min \{T^2, X^2Y^{-2} \}),$$
hence
\begin{eqnarray*}
\sum_{m=1}^{R} |E(\mathcal{H}, X_m; z)|^2 &\ll& \log X \sum_{1\leq T < X^2Y^{-2}} \left(\sum_{m=1}^{R}|S(X_m, z, T)|^2\right) + RY^2 \\
&\ll& \log X \sum_{1\leq T < X^2Y^{-2}} (T^2 + XT\delta^{-1} ) (XT^{-3} \min \{T^2, X^2 Y^{-2} \}) 
+ RY^2.
\end{eqnarray*}
We get the bound 
\begin{eqnarray*}
\sum_{m=1}^{R} |E(\mathcal{H}, X_m; z)|^2 &\ll& X \log X \left( \sum_{1\leq T < XY^{-1}} T \right) +  X^2\delta^{-1}  \log X \left(\sum_{1\leq T < XY^{-1}} 1 \right)  \\
&&+ X^3Y^{-2} \log X \left(\sum_{X Y^{-1}\leq T < X^2 Y^{-2}}T^{-1} \right) \\
&&+ X^4 \delta^{-1}Y^{-2}\log X  \left( \sum_{X Y^{-1}\leq T < X^2Y^{-2}} T^{-2} \right) +RY^2.
\end{eqnarray*}
Trivial bounds for each term seperately yield the bound
\begin{eqnarray*}
\sum_{m=1}^{R} |E(\mathcal{H}, X_m; z)|^2 \ll  X^2 Y^{-1} \log  X+ \delta^{-1} X^2 \log ^2 X + RY^2.
\end{eqnarray*}
The optimal choice for $Y$ is $Y = R^{-1/3} X^{2/3}$ which implies the bound
$$\sum_{m=1}^{R} |E(\mathcal{H}, X_m; z)|^2 \ll  R^{1/3} X^{4/3} \log X + \delta^{-1} X^2 \log ^2 X.$$
\end{proof}

\begin{proof} (of Theorem \ref{integral1})
Choosing $\delta^{-1} \ll R X^{-1}$ and $R > X^{1/2}$ in the bound
$$\sum_{m=1}^{R} |E(\mathcal{H}, X_m; z)|^2 \ll  R^{1/3} X^{4/3} \log  X + \delta^{-1} X^2 \log ^2 X,$$
we get
$$\sum_{m=1}^{R} |E(\mathcal{H}, X_m; z)|^2 \ll  R^{1/3} X^{4/3} \log  X + R X \log ^2 X \ll R X \log^2 X$$
and we get the bound (\ref{riemannsumeradius}). For the bound (\ref{integralradius}), we take the points $X_i$ equally spaced in the interval $[X,2X]$ with $\delta= R^{-1} X$. As $R \to \infty$,
$$\sum_{m=1}^{R} |E(\mathcal{H}, X_m; z)|^2 \frac{X}{R} \to \int_{X}^{2X} |E(\mathcal{H}, x ; z)|^2 dx,$$
hence
$$\frac{1}{X} \int_{X}^{2X} |E(\mathcal{H}, x ; z)|^2 dx \ll X \log ^2 X.$$

\end{proof}

\begin{proof} (of Proposition \ref{averaging2}) For a sequence $\{a_k\}$, Cauchy-Schwarz inequality implies 
\begin{eqnarray*}
\left( \sum_{k=0}^{n} a_k \right)^2 \ll \sum_{k=0}^{n} (n+1-k)^2 a_k^2, \quad \left( \sum_{k=0}^{n} a_k \right)^2 \ll \sum_{k=0}^{n} (k+1)^2 a_k^2.
\end{eqnarray*}
The first inequality for $a_k = S( X , z_m, 2^k) $ implies the bound
\begin{equation}\label{bound4}
\left| \sum_{ 1\leq T < X^2 Y^{-2}} S(X , z_m, T) \right|^2 \ll \sum_{1\leq T < X^2 Y^{-2}}\left| \log T^{-1} X^2 Y^{-2}+1 \right|^2 |S( X, z_m, T)|^2,
\end{equation}
whereas the second gives
\begin{equation}\label{bound7}
\left| \sum_{ 1\leq T < X^2 Y^{-2}} S( X , z_m, T) \right|^2 \ll \sum_{1\leq T < X^2 Y^{-2}} (\log T +1 )^2 |S( X , z_m, T)|^2.
\end{equation}
The bounds (\ref{bound4}) and (\ref{bound7}) give
\begin{equation}\label{bound8}
\left| \sum_{ 1\leq T < X^2 Y^{-2}} S(X , z_m, T) \right|^2 \ll \sum_{1\leq T < X^2 Y^{-2}}| c_T|^2 |S( X , z_m, T)|^2,
\end{equation}
where $c_T = \min \left \{ \log T^{-1} X^2 Y^{-2} +1, \log T + 1 \right\}$. 
Using Theorem \ref{lemmacham2}, bound (\ref{boundofnorma}) and summing over $T=2^k$, $ k = 0, 1, \ldots, [\log_2 (X^2 Y^{-2})]$, we get
\begin{eqnarray*}
\sum_{m=1}^{R} |E(\mathcal{H}, X; z_m)|^2 &\ll& \sum_{1 \leq T < X^2 Y^{-2} } |c_T|^2  (T^2 + \delta^{-2}) (XT^{-1} \min \{1, X^2 T^{-2} Y^{-2} \}) + RY^2 \\
&\ll& \delta^{-2} X + X^2 Y^{-1} \log^2 X + R Y^2,
\end{eqnarray*}
where the last bound yields as in the proof of Proposition \ref{averaging1}. The bound (\ref{spatialsecond}) is obtained for $Y = X^{2/3} R^{-1/3}$. For the fourth moment, we use H\"older's inequality to prove
\begin{equation}\label{bound9}
\sum_{m=1}^{R} \left| E_f(\mathcal{H}, X; z_m) \right|^4 \ll \log^3 X \sum_{1\leq T < X^2 Y^{-2}}\left( \sum_{m=1}^{R} |S(X, z_m, T) |^4 \right)+ RY^4,
\end{equation}
We can now finish the proof assuming the following large sieve inequality
\begin{equation}\label{bound10}
\sum_{m=1}^{R} \left| \sum_{|t_j| \leq T} a_j u_j(z_m) + \frac{1}{4 \pi} \sum_{\mathfrak{a}} \int_{-T}^{T} a_{\mathfrak{a}}(t) E_{\mathfrak{a}}(z_m, 1/2+it) dt \right|^4 \ll (T^4 +T^2 \delta^{-2}) \|a\|_{*}^4.
\end{equation}
For the proof see \cite{cham0}. We can now derive the second part of the proposition applying (\ref{bound10}) to the $a_j = d(f,t_j)$ and $a_{\frak{a}}(t) = d(f,t)$. 
\end{proof}

\begin{proof} (of Theorem \ref{integral2})
Consider first the $n=1$ case. Choosing $\delta^{-2} \ll R $ and $R > X^{1/2}$ in the bound
$$\sum_{m=1}^{R} |E(\mathcal{H}, X; z_{m})|^2 \ll  \delta^{-2} X + R^{1/3} X^{4/3} \log ^2 X,$$
we get
$$\frac{1}{R} \sum_{m=1}^{R} |E(\mathcal{H}, X; z_{m})|^2 \ll X + R^{-2/3} X^{4/3} \log ^2 X     \ll X \log^2 X,$$
and the first part follows. For the integral estimate we notice that as $R \to \infty$
$$\frac{1}{R} \sum_{m=1}^{R} |E(\mathcal{H}, X; z_m)|^2 \to \int_{\GmodH} |E(\mathcal{H}, X ; z)|^2 d \mu(z).$$
Hence
$$\int_{\GmodH} |E(\mathcal{H}, X ; z)|^2 d \mu(z)  \ll X \log^2  X.$$
The $n=2$ case follows in exactly the same way.
\end{proof}

For the error term $E(X;z,w)$ of the classical hyperbolic lattice point problem, the optimal bound is conjectured to be 
$$E(X;z,w) =O(X^{1/2 + \epsilon})$$
for every $\epsilon > 0$. This is supported by the $\Omega$-results of Phillips-Rudnick \cite{phirud} and the averaging results in \cite{cham2}.

 Theorems \ref{integral1} and \ref{integral2} lead us to formulate the analogous conjecture.
\begin{conjecture} For $\G$ cocompact or cofinite and $\mathcal{H}$ a hyperbolic conjugacy class of $\G$, the error term $E(\mathcal{H}, X;z)$ satisfies the bound
$$E(\mathcal{H}, X;z) = O(X^{1/2 + \epsilon})$$
for every $\epsilon > 0$.
\end{conjecture}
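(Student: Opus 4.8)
The plan is to isolate the single place in the proof of Theorem~\ref{theorempart1} (and Theorem~\ref{theorempart2}) where all cancellation is sacrificed, and to attack it head-on. Tracing through Section~\ref{cocompactcase}, the comparison $A(f^-)(z)\le N(\mathcal{H},X;z)\le A(f^+)(z)$ together with Proposition~\ref{estimates1} reduces the error to
$$
E(\mathcal{H},X;z)=G(f^{\pm},z)+O\!\left(Y+X^{1/2}\log X\right),
$$
where $G(f^{\pm},z)=\sum_{0\neq t_j\in\mathbb{R}}2\,d(f^{\pm},t_j)\hat u_j u_j(z)$ (plus its Eisenstein analogue in the cofinite case). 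Every other contribution — the small eigenvalues, the range $|t_j|<1$, the tail $|t_j|>X^2Y^{-2}$, and the smoothing — is already $O(Y+X^{1/2}\log X)$. Thus, choosing the smoothing parameter $Y=X^{1/2}$, the conjecture is \emph{equivalent} to the pointwise cancellation estimate $G(f^{\pm},z)=O(X^{1/2+\epsilon})$, and the whole problem is concentrated in this one oscillatory spectral sum.

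The precise target I would aim for is a square-root-cancellation bound on dyadic pieces: for every $T$ with $1\le T\le X^{1/2}$,
$$
\sum_{T<|t_j|\le 2T}X^{it_j}\,\hat u_j\,u_j(z)\ll T^{1+\epsilon},
$$
uniformly in $X$ and in $z$, together with the corresponding bound for the Eisenstein integral over $[T,2T]$. Indeed, inserting this into the dyadic decomposition of $G$ and using $d(f^{\pm},t)=a(t,Y/X)X^{1/2+it}+b(t,Y/X)X^{1/2-it}$ with $a,b\ll t^{-2}\min\{t,XY^{-1}\}$ (Proposition~\ref{estimates1}(b)) replaces the factor $T^{3/2}$ coming from Cauchy--Schwarz and Lemma~\ref{lemmahuber2} by $T^{1+\epsilon}$; summing the geometric series $X^{1/2}\sum_n 2^{-n(1-\epsilon)}\min\{2^n,XY^{-1}\}$ then gives $G(f^{\pm},z)\ll X^{1/2+\epsilon}$ regardless of $Y$, and the conjecture follows.

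The hard part — and the reason this remains a conjecture — is exactly this pointwise cancellation. The large-sieve inequalities of Chamizo (Theorems~\ref{lemmacham1} and~\ref{lemmacham2}) deliver precisely the exponent $T^{1+\epsilon}$, but only in mean square over the spatial variable $z$ or over the radial variable $X$; this is why the same reduction yields the conjectural exponent \emph{on average} in Theorems~\ref{integral1} and~\ref{integral2}, while leaving the individual bound untouched. Promoting an $L^2$-averaged cancellation to a uniform pointwise one is the genuine obstacle, and it is of the same nature as the obstruction in the classical hyperbolic lattice point problem, where $E(X;z,w)=O(X^{1/2+\epsilon})$ is likewise open: in both cases one must control a spectral sum $\sum_j(\cdots)u_j(z)X^{it_j}$ without the luxury of averaging away the phases.

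Two routes seem worth pursuing, neither of which I expect to close the gap unconditionally. In the arithmetic setting $\Gamma=\slz$ the period integrals $\hat u_j$ along the closed geodesic are, via Hecke's and Waldspurger-type formulas, essentially central values of $L$-functions attached to the corresponding real quadratic field; one could then hope to trade the averaged cancellation for genuine arithmetic cancellation by invoking subconvexity and moment estimates for these $L$-functions. Alternatively, one may read $\sum_j X^{it_j}\hat u_j u_j(z)$ as a half-wave propagator at time $\log X$ correlating the point $z$ with the geodesic segment $\sigma$, and try to exploit equidistribution of the geodesic flow; on its own, however, such geometric input appears to recover only the exponent $2/3$ rather than square-root cancellation, because of the accumulation of geodesics returning close to $\sigma$. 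I therefore regard the uniform pointwise cancellation in $G(f^{\pm},z)$ as the decisive and, at present, unresolved step.
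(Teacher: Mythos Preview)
The statement is a \emph{conjecture}; the paper does not prove it and offers no argument beyond the motivation that Theorems~\ref{integral1} and~\ref{integral2} give the exponent $1/2$ on average, in analogy with the classical hyperbolic lattice point problem. There is therefore no proof in the paper to compare against.

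Your proposal is not a proof either, and you are explicit about this: you reduce the conjecture to a pointwise square-root cancellation bound for the dyadic spectral sums $\sum_{T<|t_j|\le 2T}X^{it_j}\hat u_j u_j(z)$, and then correctly identify this bound as the unresolved step. That reduction is sound and is exactly the mechanism behind the paper's average results --- the large sieve supplies the required cancellation in $L^2$ over $z$ or $X$, but not pointwise. Your diagnosis of the obstruction (promoting averaged cancellation to uniform pointwise cancellation, parallel to the classical conjecture $E(X;z,w)=O(X^{1/2+\epsilon})$) is accurate, and the two speculative routes you sketch (arithmetic input via $L$-values of $\hat u_j$ in the $\slz$ case, or wave-propagation/equidistribution arguments) are reasonable directions, though as you say neither is expected to close the gap unconditionally.

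In short: there is no discrepancy to report, because neither the paper nor your proposal claims a proof. Your write-up is an honest and well-aimed account of where the difficulty lies.
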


\section{Arithmetic applications}\label{arithmeticapplications}

 In this section we are interested in arithmetic corollaries of our results. We use the geometric interpretation to get an arithmetic interpretation of the quantity $N(\mathcal{H}, X;z)$. We restrict our attention to $\G = \slz$.

 Fix a point $z$ in $\mathbb{H}$. Huber's interpretation in \cite{huber} shows that $N(\mathcal{H}, X;z)$ counts $\gamma$ in $\Gamma/  \langle g \rangle $ such that $\cos v \geq X^{-1}$, where $v$ is the angle defined by the ray from $0$ to $\gamma z$ and the geodesic $\{yi, y>0\}$. Denote by $c$ the geodesic from $\gamma z$ perpendicular to $\{yi, y>0\}$ and by $\ell(c)$ its length. Then
\begin{equation}\label{arithm1}
 \rho(\gamma z,\{ iy, y>0 \})=  \ell(c) = \int_{\pi/2 - v}^{\pi/2} \csc (t) \, dt = \log \left(\frac{1 + \sin v}{\cos v} \right).
\end{equation}
 On the other hand, the distance of $\gamma z$ to the imaginary axis is given by
\begin{equation} \label{arithm2}
\cosh \rho(\gamma z, i|\gamma z|) = \frac{|\gamma z|}{\Im(\gamma z)}.
\end{equation}
For
$$\gamma = \left( \begin{array}{cc} A & B \\ C & D \end{array} \right),$$
we define
\begin{equation} \label{arithm3}
f_z (A,B,C,D) =\cosh \rho(\gamma z, i|\gamma z|) = \frac{|\gamma z|}{\Im(\gamma z)} = \frac{|Az+B||Cz+D|}{\Im (z)}.
\end{equation}
Using $|Cz+D| = |C\bar{z} +D|$ and $AD-BC=1$, $f_z$ can be written in the form 
\begin{equation} \label{arithm4}
 f_z(A,B,C,D) =  \frac{ \left( \left(AC|z|^2+ BD + AD \Re(z) + BC \Re(z) \right)^2 + \Im(z)^2 \right)^{1/2}}{\Im(z)}.
\end{equation}
Using (\ref{arithm1}), the condition $\cos v \geq X^{-1}$ can now be written as
\begin{equation}\label{arithm5}
 f_z(A,B,C,D) \leq  \cosh \left(\log (X + \sqrt{X^2-1})\right)= X.
\end{equation}
Let $z= \a+ \b i$. Inequality (\ref{arithm5}) takes the form
\begin{equation}\label{arithm6}
\left|(\a^2+\b^2) AC + BD + \a AD + \a BC\right| \leq \b \sqrt{X^2-1} =\b X + O(X^{-1}). 
\end{equation}
To get simple results, we take specific choices for $z$.

\subsection{Quadratic forms:} For the basics of indefinite quadratic forms we refer to \cite{sarnak}. Let $Q(x,y) = ax^2+bxy +cy^2$ be a primitive indefinite quadratic form in two variables, i.e. $(a,b,c) = 1$ and $b^2-4ac = d>0$ is not a square. We denote $Q$ by $[a,b,c]$. Two forms $[a,b,c]$, $[a',b',c']$ are called equivalent ($[a,b,c] \sim [a',b',c']$) if there is a $\gamma \in \slz$ such that 
$$
 \left( \begin{array}{cc} a' & b'/2 \\ b'/2 & c' \end{array} \right) =  \gamma^{t}  \left( \begin{array}{cc} a & b/2 \\ b/2 & c \end{array} \right) \gamma.
$$
The automorphs of $Q$ is the group $\hbox{Aut}(Q) = \Gamma \subset \slz$ which fixes $Q$, under the action above. This group is infinite and cyclic, with generator
$$M_{[a,b,c]} =   \left( \begin{array}{cc} \frac{t_0 - b u_0}{2} & -cu_0 \\ au_0 & \frac{t_0 + bu_0}{2} \end{array} \right),$$
where $t_0, u_0 > 0$ is the fundamental solution of Pell's equation $x^2-dy^2 =4$. Since $t_0 > 2$, the matrix $M_{[a,b,c]}$ is hyperbolic. We denote by $\varepsilon_d$ the quantity
$$ \varepsilon_d = \frac{t_0+\sqrt{d} u_0}{2}.$$
The quadratic form $Q$ is associated with two real quadratic numbers
$$\theta_1 = \frac{-b + \sqrt{d}}{2a}, \quad \theta_2 = \frac{-b - \sqrt{d}}{2a},$$
the roots of the polynomial $a \theta^2 + b \theta + c$.

 The main reason we are interested in quadratic forms is that, according to the next proposition, they are in one-to-one correspondence with hyperbolic conjugacy classes of the modular group (a proof of this result can be found in \cite[p.~232]{sarnak}).
\begin{proposition}\label{prop} Define the map $\phi$ by
$$\phi([a,b,c]) = M_{[a,b,c]}.$$
Then
\\ a) $\phi$ is a bijective map of the set of primitive indefinite quadratic forms onto the set of primitive hyperbolic elements of $\slz$.
\\ b) $\phi$ commutes with the action of $\slz$: $[a,b,c] \sim [a',b',c']$ iff $M_{[a,b,c]}$ is conjugate to $M_{[a',b',c']}$.
\end{proposition}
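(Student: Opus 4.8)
The plan is to prove both parts by exploiting the dictionary between a primitive indefinite form $Q=[a,b,c]$, its pair of real quadratic irrationalities $\theta_1,\theta_2$, and the action of $M_{[a,b,c]}$ on $\mathbb{H}$. The central observation is that $M_{[a,b,c]}$ is constructed from the fundamental solution $(t_0,u_0)$ of Pell's equation $x^2-dy^2=4$, so its fixed points on $\partial\mathbb{H}=\mathbb{R}\cup\{\infty\}$ should be precisely $\theta_1$ and $\theta_2$. First I would verify by direct computation that $M_{[a,b,c]}\theta_i=\theta_i$ for $i=1,2$: writing the fixed-point equation $au_0\theta^2+(t_0+bu_0)\theta/2=(t_0-bu_0)\theta/2-cu_0$, i.e. $a\theta^2+b\theta+c=0$ after clearing $u_0$, recovers exactly the defining polynomial of $\theta_1,\theta_2$. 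Since $t_0>2$ forces $|\tr(M_{[a,b,c]})|=t_0>2$, the element is hyperbolic, and its axis is the geodesic joining $\theta_1$ to $\theta_2$.

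For injectivity and surjectivity (part a), I would argue as follows. A primitive hyperbolic $g\in\slz$ has two real fixed points which are conjugate quadratic irrationals, and its axis is a geodesic whose endpoints determine, up to the sign of the discriminant and a scaling, an integral binary quadratic form: given fixed points $\theta_{1,2}=(-b\pm\sqrt d)/(2a)$, the form $[a,b,c]$ with $a>0$ chosen so that $(a,b,c)=1$ is uniquely recovered from the unordered pair $\{\theta_1,\theta_2\}$ together with the requirement that it be primitive. To see that $\phi$ hits every primitive hyperbolic element, one shows that the centralizer of $g$ in $\slz$ is infinite cyclic (the stabilizer of its axis), and its generator corresponds to the fundamental automorph, which is precisely $M_{[a,b,c]}$ for the form $Q$ read off from the fixed points; primitivity of $g$ matches primitivity of $Q$ because both are governed by the fundamental solution of the same Pell equation. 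Injectivity follows because distinct primitive forms have distinct unordered fixed-point pairs, hence distinct matrices $M_{[a,b,c]}$.

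For part b, I would use that $\gamma\in\slz$ acts on forms by $Q\mapsto \gamma^t Q\gamma$ and on matrices by conjugation $M\mapsto \gamma M\gamma^{-1}$, and that these two actions are intertwined by $\phi$. Concretely, the action of $\gamma$ on forms corresponds on the level of fixed points to the Möbius action $\theta\mapsto\gamma\theta$, and the automorph of the transformed form $\gamma^t Q\gamma$ is $\gamma M_{[a,b,c]}\gamma^{-1}$, which I would check either by a direct matrix identity or by observing that conjugating $M_{[a,b,c]}$ moves its axis to the axis with endpoints $\gamma\theta_1,\gamma\theta_2$, which are the roots of the transformed form. Thus $[a,b,c]\sim[a',b',c']$ if and only if $M_{[a,b,c]}$ is conjugate to $M_{[a',b',c']}$, giving the equivariance.

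The main obstacle I anticipate is making the surjectivity and the primitivity-matching in part a) fully rigorous: one must show that the generator of the centralizer of an arbitrary primitive hyperbolic element is exactly the $M_{[a,b,c]}$ attached to the fundamental Pell solution, which requires relating the minimal translation length along the axis to the fundamental unit $\varepsilon_d$ and ruling out that a proper power of $g$ could itself be the fundamental automorph. Since this is the classical correspondence of Gauss--Dirichlet and a reference is available at \cite[p.~232]{sarnak}, I would streamline by citing the established one-to-one correspondence between $\slz$-classes of primitive forms and classes of primitive hyperbolic elements, and concentrate the explicit computation on verifying that $\phi$ as defined here realizes that correspondence via the fixed-point identification above.
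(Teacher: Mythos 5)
Your overall plan is fine, and in fact its concluding move coincides exactly with what the paper does: the paper offers no proof of this proposition at all, but simply cites the classical Gauss--Dirichlet correspondence via \cite[p.~232]{sarnak}. So your fixed-point computation (that the M\"obius fixed points of $M_{[a,b,c]}$ are precisely the roots of $a\theta^2+b\theta+c$, and that $\operatorname{tr} M_{[a,b,c]}=t_0>2$ forces hyperbolicity) is supplementary detail the paper omits, and your final appeal to Sarnak is the paper's entire argument.

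Two steps of your sketch, however, are wrong as stated. First, your injectivity argument fails: it is not true that distinct primitive forms have distinct unordered fixed-point pairs, since $Q=[a,b,c]$ and $-Q=[-a,-b,-c]$ are distinct primitive indefinite forms with the same discriminant and the same roots $\{\theta_1,\theta_2\}$; they correspond to the two orientations of the common axis, and one checks from the explicit formula that $M_{[-a,-b,-c]}=M_{[a,b,c]}^{-1}$, so they map to distinct but not fixed-point-distinguishable matrices. Injectivity should instead be read off the matrix entries: from $M_{[a,b,c]}$ one recovers $t_0=\operatorname{tr} M$ and the quantities $au_0$, $-cu_0$, $bu_0$ from the entries, so two primitive integral triples yielding the same matrix are positively proportional, hence equal. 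The same orientation issue infects your surjectivity recipe: normalizing $a>0$ from the fixed points of a primitive hyperbolic $g$ may return $g^{-1}$ rather than $g$; the sign of $(a,b,c)$ must be chosen so that the attracting fixed point of $g$ (the eigenvector $(\theta_1,1)$ of eigenvalue $\varepsilon_d$ in the diagonalization \eqref{tmatrix}) is the root $(-b+\sqrt{d})/(2a)$. Second, your equivariance is stated in the wrong direction: with the paper's action $Q'=\gamma^{t}Q\gamma$, the roots transform by $\theta'=\gamma^{-1}\theta$ and the fundamental automorph by $M_{Q'}=\gamma^{-1}M_{Q}\gamma$, not $\gamma M_{Q}\gamma^{-1}$ and $\theta\mapsto\gamma\theta$. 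This is harmless for the symmetric statement of b), but note that trace alone pins down the conjugate only up to $M_{Q'}^{\pm 1}$, so the identification of $\gamma^{-1}M_Q\gamma$ with $M_{Q'}$ (rather than its inverse) again needs the attracting-fixed-point argument. Finally, a convention point you should make explicit: every $M_{[a,b,c]}$ has trace $t_0>2$, so elements of $\slz$ of trace less than $-2$ (such as $-g$) are not in the image of $\phi$; the bijection of part a) is to be understood in $\pslz$, or equivalently with traces normalized positive, as in Sarnak.
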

The matrix $M_{[a,b,c]}$ has eigenvalues:
\begin{equation} \label{eigenvalues}
\lambda_{1,2} = \frac{t_0 \pm \sqrt{d} u_0}{2} = \varepsilon_d^{\pm 1}.
\end{equation}
Its diagonalization is
\begin{equation} \label{tmatrix}
M_{[a,b,c]} = T  \left( \begin{array}{cc} \lambda_1 & 0 \\ 0 & \lambda_2 \end{array} \right) T^{-1}, \quad
T =  \left( \begin{array}{cc} \theta_1 & \theta_2 \\ 1 & 1 \end{array} \right).
\end{equation}
Calculations imply that
$$M_{[a,b,c]}^n = \frac{1}{\theta_1 - \theta_2}  \left( \begin{array}{cc} \lambda_1^n \theta_1 - \lambda_2^n \theta_2 & (\lambda_2^n - \lambda_1^n) \theta_1 \theta_2 \\ \lambda_1^n - \lambda_2^n & \lambda_2^n \theta_1 - \lambda_1^n \theta_2 \end{array} \right) \in \slz.$$ 

 Let us first examine a simple case. Consider the case $b=0$, i.e. we consider the form $Q(x,y)= ax^2+cy^2$. Set $\theta = \theta_1 = - \theta_2= \sqrt{d}/2a $. Thus, $\theta^2 = -c/a >0$. In this case, $M_{[a,b,c]}^n$ takes the form
\begin{equation} \label{matrixmn}
M_{[a,0,c]}^n = M^n = \frac{1}{2 \theta}  \left( \begin{array}{cc} (\lambda_1^n + \lambda_2^n) \theta & (\lambda_1^n - 
\lambda_2^n) \theta^2 \\ \lambda_1^n - \lambda_2^n & (\lambda_1^n + \lambda_2^n) \theta \end{array} \right) \in \slz.
\end{equation}
We now prove an application of Theorems \ref{theorem1} and \ref{average1}.
\begin{proposition} \label{arithmetic1} Given $Q(x,y) = ax^2+cy^2$ with $\theta^2 = -c/a>0$ and $-ac$ not a square, let $F_Q$ denote the indefinite quadratic form 
$$F_Q (\a, \b, \g , \delta) = \a^2 -\frac{a}{c} \b^2  +\frac{c}{a} \g^2  -  \delta^2.$$
Let $P(X)$ be the number  of solutions $(\a, \b, \g, \delta) \in \Z^4$ such that $\a \delta - \b \g=1$ and
$$\left| F_Q (\a, \b, \g, \delta) \right| \leq X,$$
under the equivalence: $(\a, \b, \g, \delta) \sim (\a', \b', \g', \delta')$ iff there exists an integer $n$ such that
\begin{eqnarray*} \left( \begin{array}{cc} \a & \b \\ \g & \delta \end{array} \right)  = M^n  \left( \begin{array}{cc} \a' & \b' \\ \g' & \delta' \end{array} \right).
\end{eqnarray*}
Here $M^n$ is given by (\ref{matrixmn}). Then
\\a) $P(X)$ satisfies
\begin{eqnarray*}
P(X) = \frac{6 \log \varepsilon_d}{\pi} X + E(X), 
\end{eqnarray*}
with $\epsilon_d$ given by (\ref{eigenvalues}) and $E(X) = O( X^{2/3})$.
\\b) $E(X)$ satisfies the average bound 
$$\frac{1}{X} \int_{X}^{2X} |E(x)|^2 dx \ll  X \log^2 X,$$
where the \lq{$\ll$\rq}  constant depends on the quadratic form $Q$.
\end{proposition}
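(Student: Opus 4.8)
The plan is to realise $P(X)$ as a special value of the counting function $N(\mathcal{H},\,\cdot\,;z)$ for a cleverly chosen base point $z$, and then to read off parts (a) and (b) directly from Theorems \ref{theorem1} and \ref{average1} together with the explicit constants for $\slz$. By Proposition \ref{prop} the form $Q=[a,0,c]$ corresponds to the primitive hyperbolic class $\mathcal{H}$ generated by $M=M_{[a,0,c]}$, whose axis $\ell$ is the geodesic with endpoints $\theta_1=\theta$, $\theta_2=-\theta$, i.e.\ the semicircle $|w|=\theta$ with $\theta^2=-c/a$. Since the centraliser of a primitive hyperbolic element is exactly $\langle M\rangle$, the map $a\langle M\rangle\mapsto aMa^{-1}$ identifies $\mathcal{H}$ with the cosets $\langle M\rangle\backslash\slz$, and $|F_Q|$ is constant on these cosets because $M$ preserves $\ell$. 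Thus the equivalence relation in the statement is precisely this coset relation, so that counting modulo $M^n$ is counting over $\mathcal{H}$.

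The decisive step is the choice $z=i\theta$, the top of the semicircle $\ell$, followed by a computation of the perpendicular distance. For $\sigma=\left(\begin{smallmatrix}\alpha&\beta\\\gamma&\delta\end{smallmatrix}\right)\in\slz$ I would conjugate $\ell$ to the imaginary axis by $S=\tfrac{1}{\sqrt{2\theta}}\left(\begin{smallmatrix}1&-\theta\\1&\theta\end{smallmatrix}\right)$ and use $\sinh\rho(\sigma z,\ell)=\lvert\Re(S\sigma z)\rvert/\Im(S\sigma z)$. Writing $S\sigma(i\theta)$ as a single rationalised fraction, the common denominator cancels in this ratio; using $\alpha\delta-\beta\gamma=1$ the imaginary part of the numerator collapses to $2\theta^2$, while the real part collapses to $\theta^2\big(\alpha^2+\theta^{-2}\beta^2-\theta^2\gamma^2-\delta^2\big)=\theta^2F_Q(\alpha,\beta,\gamma,\delta)$. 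Hence
$$\sinh\rho(\sigma(i\theta),\ell)=\tfrac12\,\lvert F_Q(\alpha,\beta,\gamma,\delta)\rvert.$$
Combining this with the displacement--distance identity $\sinh\!\big(\tfrac12\rho(z,aMa^{-1}z)\big)=\cosh\rho(a^{-1}z,\ell)\,\sinh(\mu/2)$, the defining condition of $N(\mathcal{H},X;i\theta)$ becomes $\cosh\rho(\sigma(i\theta),\ell)\le X$ with $\sigma=a^{-1}$, i.e.\ $\lvert F_Q(\sigma)\rvert\le 2\sqrt{X^2-1}$. Setting $X'=2\sqrt{X^2-1}$, so that $X=X'/2+O(1/X')$, gives the exact identification $P(X')=N(\mathcal{H},X;i\theta)$.

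With this dictionary both parts follow. For (a), Huber's main term \eqref{hubermainterm} with $\vol(\slz\backslash\H)=\pi/3$, $\nu=1$, and $\mu=2\log\varepsilon_d$ (from \eqref{eigenvalues}, since $\mathrm{tr}\,M=t_0=\varepsilon_d+\varepsilon_d^{-1}=2\cosh(\mu/2)$) yields main term $\tfrac{12\log\varepsilon_d}{\pi}X=\tfrac{6\log\varepsilon_d}{\pi}X'$, while Theorem \ref{theorem1} gives $E(X')=O(X^{2/3})=O(X'^{2/3})$. For (b), the change of variables $X'=2\sqrt{X^2-1}$ is smooth with Jacobian bounded above and below on $[X',2X']$, so Theorem \ref{average1} transfers to $\tfrac1{X'}\int_{X'}^{2X'}\lvert E(x')\rvert^2\,dx'\ll X'\log^2 X'$. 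The hard part will be the distance computation of the second paragraph: one must verify that the rationalised numerator collapses exactly to $\theta^2F_Q$ rather than to a more complicated quartic in the entries, and then track the lower-order terms produced by the non-linear substitution $X\leftrightarrow X'$ carefully enough to confirm that they remain within the $O(X'^{2/3})$ error in (a) and do not disturb the $L^2$-average in (b).
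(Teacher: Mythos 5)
Your proposal is correct and follows essentially the same route as the paper: conjugate the axis of $M$ to the imaginary axis, take the base point lying over its apex (your $z=i\theta$ is exactly the point the paper intends, whose conjugated image is $i$), verify that $2\left|AC+BD\right| = \left|F_Q(\a,\b,\g,\delta)\right|$ for the conjugated matrix, and then read off (a) and (b) from Theorems \ref{theorem1} and \ref{average1} with $\vol(\slz\backslash\H)=\pi/3$, $\nu=1$, $\mu=2\log\varepsilon_d$ --- your derivation of the counting condition via the displacement identity $\sinh(\rho(z,\gamma z)/2)=\cosh\rho(a^{-1}z,\ell)\sinh(\mu/2)$ being just an explicit form of Huber's interpretation that the paper invokes through equations (\ref{arithm1})--(\ref{arithm6}), and your exact identification $P(2\sqrt{X^2-1})=N(\mathcal{H},X;i\theta)$ being in fact slightly cleaner than the paper's asymptotic statement $P(X)=N(\mathcal{H},X/2;z)$. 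The one point you should state explicitly is that Huber's main term coincides with the full $M(\mathcal{H},X;z)$ (so that Theorem \ref{theorem1} gives $E(X)=O(X^{2/3})$ with no further $X^{s_j}$ terms) because $\slz$ has no eigenvalues in $(0,1/4)$, a fact the paper records at the corresponding step.
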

\begin{proof}
 For a), let $\ell$ be the invariant closed geodesic of $M$. Conjugating $\G$ with $T$, we bring $\ell$ on the imaginary axis. Let also $z = T^{-1} (i)$. In this case $N(\mathcal{H}, X;z)$ counts 
\begin{eqnarray*}
\g =  \left( \begin{array}{cc} A & B \\ C & D \end{array} \right) \in \slz / \langle M \rangle \simeq T^{-1} \slz T /  G,
\end{eqnarray*}
where
\begin{eqnarray*}
G = \left\langle  \left( \begin{array}{cc} \lambda_1 & 0 \\ 0 & \lambda_2 \end{array} \right) \right\rangle,
\end{eqnarray*}
such that asymptotically
\begin{eqnarray*}
|AC + BD| \leq X.
\end{eqnarray*}
For
\begin{eqnarray*}
\g = \left( \begin{array}{cc} A & B \\ C & D \end{array} \right) = T^{-1}  \left( \begin{array}{cc} \a & \b \\ \g & \delta \end{array} \right)   T \in T^{-1} \slz T 
\end{eqnarray*}
we get
\begin{eqnarray*}
2 |AC + BD|= |F_Q(\a, \b, \g, \delta)|.
\end{eqnarray*}
Thus, $ N(\mathcal{H}, X;z)$ counts points $\slz$ under the extra equivalence that comes from the quotient with $\langle M \rangle$ such that
\begin{eqnarray*}
| F_Q(\a, \b, \g, \delta)| \leq 2 X,
\end{eqnarray*}
i.e.  $ P(X) = N(\mathcal{H}, X/2;z).$ 
By Theorem \ref{theorempart2} and the fact that $\slz$ has no eigenvalues $\lambda  \in (0, 1/4)$, we get 
\begin{eqnarray*}
 P(X) = \hat{u}_0 u_0(z) X + O(X^{2/3}).
\end{eqnarray*}
For $\G = \slz$ we have 
\begin{eqnarray*}
u_0(z) = \sqrt{\frac{3}{\pi}}, \quad \hat{u}_0 =  \sqrt{\frac{3}{\pi}} \frac{\mu}{\nu}.
\end{eqnarray*}
Since $M$ is primitive $\nu=1$ and, for $\slz$, we know that $\mu = 2 \log \varepsilon_d$, which equals the length of the closed geodesic $\ell$ (see for example Corollary $1.5$ of \cite{sarnak}). Part a) now follows. Part b) follows immediately as $E(X) = E(\mathcal{H}, X/2;z)$.
\end{proof}

\begin{remark} Note that, for $z=i$, by (\ref{arithm3}) we count solutions of 
$$(A^2+B^2)(C^2+D^2) \leq X^2,$$ 
with restrictions. The more general case $b \neq 0$ or $z \neq i$ leads to a more complicated quadratic form (see (\ref{arithm6}) and (\ref{tmatrix})). 
\end{remark}

\begin{remark} The arithmetic corollaries of the classical hyperbolic lattice point problem (see \cite{cham2}) differ from ours in the fact that the quadratic forms in \cite{cham2} are positive definite, e.g. for $z=w=i$ one gets $4u(\gamma i, i)+2 = \a^2 + \b^2 + \g^2 + \delta^2$. The quadratic form $F_Q (\a, \b, \g, \delta)$ is indefinite.
\end{remark}

\subsection{Hecke operators:}
Applying Hecke operators as in \cite{iwaniec,cham2} for the classical lattice point counting problem, we can count solutions of  $|F(\a,\b, \g, \delta)| \leq X$ lying in the hypersurface $\a \delta - \b \g = n,$ with $n>1$. Let $\G_n$ be the set
\begin{eqnarray*}
\G_n = \left \{  \left( \begin{array}{cc} \a & \b \\ \g & \delta \end{array} \right) \in \Z^{2 \times 2} : \a \delta - \b \gamma=n \right \}.
\end{eqnarray*} 
For $n \in \N$, let $T_n : \mathcal{A} (\GmodH) \to \mathcal{A} (\GmodH)$ be the $n$-th Hecke operator, see  \cite[section 8.5, chapter 12]{iwaniec}, defined by
\begin{eqnarray*}
T_n (f) (z)  = \frac{1}{\sqrt{n}} \sum_{\tau \in \G \backslash \G_n} f(\tau z).
\end{eqnarray*}
As the Hecke operators commute with $\Delta$, we choose a joint orthonormal basis $u_j$. We denote by $\lambda_j(n)$ the eigenvalue of $T_n$ for $u_j(z)$, i.e.
\begin{eqnarray*}
T_n u_j(z) = \lambda_j(n) u_j(z),
\end{eqnarray*}
and $\eta_{t}(n)$ for the Eisenstein series, i.e.
\begin{eqnarray*}
T_n E_{\infty}(z, 1/2+it) =  \eta_t (n) E_{\infty}(z, 1/2+it),
\end{eqnarray*}
where 
\begin{eqnarray*}
 \eta_t (n) = \sum_{ad=n} \left( \frac{a}{d} \right)^{it}.
\end{eqnarray*}
Notice that counting solutions
\begin{eqnarray*}
\left| F_{Q} (\a, \b, \g, \delta) \right| \leq X
\end{eqnarray*}
with $\a \delta - \b \g =n$ is equivalent to counting solutions
\begin{eqnarray*}
\left| f_i (A,B,C,D) \right| \leq  n X
\end{eqnarray*}
with $AD -BC =n$.

 We apply $T_n$ on both expressions of $A(f) (z)$. Applying $T_n$ to the spectral expansion (\ref{cofinitecaseexpansion}) we get
\begin{equation} \label{spectralexp}
T_n A(f)(z) =  \sum_{j} c(f, t_j) \lambda_j(n) u_j(z) + \frac{1}{4 \pi} \int_{-\infty}^{\infty} c_{\infty}(f,t)  \eta_t (n)  E_{\infty} (z, 1/2 + it ) dt.
\end{equation}
On the geometric side, we have
\begin{eqnarray*} 
T_n A(f)(z) =  \frac{1}{\sqrt{n}} \sum_{\tau \in \G \backslash \G_n} \left( \sum_{\gamma \in \mathcal{H}} f \left( \frac{\cosh \rho( \tau^{-1} \gamma \tau z, z) -1}{\cosh \mu(\g)-1} \right)\right).
\end{eqnarray*}
If $\mathcal{H}$ is the conjugacy class of the primitive hyperbolic matrix $M$, we define the set 
\begin{eqnarray*}
\mathcal{H}_n = \{ \g^{-1} M \g : \g \in \G_n \}.
\end{eqnarray*}
The set $\mathcal{H}_n$ is in one-to-one correspondence with the quotient set $\G_n / \langle M \rangle$. Notice also that $\mu( \tau \g \tau^{-1}) = \mu(\g)$. Therefore,
\begin{equation} \label{geometricexp}
T_n A(f)(z) = \frac{1}{\sqrt{n}} \sum_{\gamma \in \mathcal{H}_n} f \left( \frac{\cosh \rho( \gamma z, z) -1}{\cosh \mu(\g)-1} \right).
\end{equation}
Using that $|\lambda_j(n) | \leq \lambda_0(n) = \sigma(n) n^{-1/2}$ and the uniform bound $ |\eta_t (n)| \ll d(n) \ll \lambda_0(n)$ we conclude the following.
\begin{proposition}\label{arithmetic3}
Denote with $P_{Q,n}(X)$ the number  of solutions $(\a, \b, \g, \delta) \in \Z$ such that $\a \delta - \b \g=n$ and
$$\left| F_{Q} (\a, \b, \g, \delta) \right| \leq X,$$
under the equivalence $\sim$ such that: $(\a, \b, \g, \delta) \sim (\a', \b', \g', \delta')$ iff there exists an integer $m$ such that
\begin{eqnarray*} \left( \begin{array}{cc} \a & \b \\ \g & \delta \end{array} \right)  = M_{[a,b,c]}^m  \left( \begin{array}{cc} \a' & \b' \\ \g' & \delta' \end{array} \right).
\end{eqnarray*}
Then
\\ a) $ P_{Q,n}(X)$ has the asymptotic behaviour
\begin{eqnarray*}
 P_{Q,n}(X) = \frac{6 \log \varepsilon_d}{\pi} \frac{\sigma(n)}{n} X + E_n(X),
\end{eqnarray*}
with 
\begin{eqnarray*}
E_n(X) = O \left(\frac{\sigma(n)}{n^{2/3}} X^{2/3} \right).
\end{eqnarray*}
b) $E_n(X)$ satisfies the bound
$$ \frac{1}{X} \int_{X}^{2X} | E_n (x)|^2 dx  \ll \sigma^2(n) \frac{X}{n} \log^2  \left(\frac{X}{n}\right),$$
where the  \lq{$\ll$\rq} constant depends on the quadratic form $Q$.

\end{proposition}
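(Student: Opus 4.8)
The plan is to read off both parts of the proposition from the two expansions of $T_nA(f)(z)$ in (\ref{spectralexp}) and (\ref{geometricexp}), reducing everything to the unweighted estimates of Theorems \ref{theorem1} and \ref{average1} at the cost of a single explicit arithmetic factor. First I would fix the geometric dictionary exactly as in the proof of Proposition \ref{arithmetic1}: conjugating $\G$ by $T$ so that the invariant geodesic of $M_{[a,b,c]}$ lands on the imaginary axis and taking $z = T^{-1}(i)$, the right-hand side of (\ref{geometricexp}) counts the classes in $\G_n/\langle M\rangle$, i.e. the solutions recorded by $P_{Q,n}$, with the $F_Q$-inequality translating into a condition on $\cosh\rho(\g z,z)$. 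Tracking the determinant-$n$ scaling through this translation (the only new feature compared with $n=1$) shows that $P_{Q,n}(X) = \sqrt{n}\,T_nA(f)(z)$ for the test function $f$ of (\ref{fplus})/(\ref{fminus}) taken at geometric radius a fixed multiple of $X/n$; it is precisely this radius that will generate the powers of $n$ in the statement.

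Next I would feed this into the spectral side (\ref{spectralexp}). The contribution of the constant eigenfunction $u_0$ (recall $\slz$ has no exceptional eigenvalues) carries the Hecke eigenvalue $\lambda_0(n) = \sigma(n)n^{-1/2}$, and combined with the $\sqrt{n}$ normalisation and the radius $X/n$ it produces exactly the main term $\frac{6\log\varepsilon_d}{\pi}\frac{\sigma(n)}{n}X$, as in the remark following Theorem \ref{theorempart1}. For the error term I would run the argument of Theorem \ref{theorempart2} verbatim, the only change being that every Fourier coefficient $d(f,t_j)$ (resp. $d(f,t)$) now appears multiplied by $\lambda_j(n)$ (resp. $\eta_t(n)$). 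Since $|\lambda_j(n)|\le\lambda_0(n)$ and $|\eta_t(n)|\ll\lambda_0(n)$ uniformly, this factor can be pulled out of the dyadic spectral sums before invoking Proposition \ref{estimates1}, \cite[prop.~7.2]{iwaniec} and Lemma \ref{lemmahuber2} (together with Lemma \ref{eisensteinlemma2} for the continuous part). The net effect is that the $O(X^{2/3})$ bound of Theorem \ref{theorem1} at radius $X/n$ is amplified by $\sqrt{n}\,\lambda_0(n) = \sigma(n)$, giving $E_n(X) = O(\sigma(n)\,(X/n)^{2/3}) = O(\sigma(n)n^{-2/3}X^{2/3})$, which is part a).

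For part b) I would repeat the proof of Theorem \ref{average1}. The radial average is controlled through the large sieve inequality of Theorem \ref{lemmacham1} applied to the sequences $a_j = d(f,t_j)\lambda_j(n)$ and $a_{\mathfrak a}(t) = d(f,t)\eta_t(n)$. Using the same uniform bounds, the norm $\|a\|_{*}^2$ is at most $\lambda_0(n)^2 = \sigma^2(n)/n$ times the norm appearing in Theorem \ref{average1}, so the whole argument goes through with this constant inserted. Carrying the radius $X/n$ through the change of variables $u = x/n$ in $\frac{1}{X}\int_{X}^{2X}|E_n(x)|^2\,dx$ then converts the bound $X\log^2 X$ of Theorem \ref{average1} into $\sigma^2(n)\frac{X}{n}\log^2(X/n)$, as claimed.

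The main obstacle, and the only place requiring genuine care, is the bookkeeping of the three competing scalings, namely the $1/\sqrt{n}$ in the definition of $T_n$, the compensating $\sqrt{n}$ needed to recover $P_{Q,n}$, and the reduced radius $X/n$, since it is their interplay rather than any new analytic input that fixes the exact powers $\sigma(n)/n$ and $\sigma(n)n^{-2/3}$. Everything analytic is already contained in Theorems \ref{theorem1} and \ref{average1}; the Hecke eigenvalue bounds stated before the proposition are exactly what allow those theorems to be applied uniformly once the factor $\lambda_0(n)$ has been extracted.
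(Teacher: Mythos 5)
Your proposal is correct and follows essentially the same route as the paper: the paper likewise applies $T_n$ to the two expressions \eqref{spectralexp} and \eqref{geometricexp} of $A(f)(z)$, identifies $\mathcal{H}_n$ with $\G_n/\langle M\rangle$, rescales the counting condition by the determinant (its observation that $|F_Q|\leq X$ with $\a\delta-\b\g=n$ corresponds to $|f_i|\leq nX$, i.e.\ geometric radius $\asymp X/n$), and pulls out the bounds $|\lambda_j(n)|\leq\lambda_0(n)=\sigma(n)n^{-1/2}$ and $|\eta_t(n)|\ll\lambda_0(n)$ before rerunning the arguments of Theorems \ref{theorem1} and \ref{average1}. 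Your explicit bookkeeping of the three scalings --- the $1/\sqrt{n}$ in $T_n$, the compensating $\sqrt{n}$, and the reduced radius $X/n$ --- correctly reproduces the factors $\sigma(n)/n$, $\sigma(n)n^{-2/3}$ and $\sigma^2(n)\frac{X}{n}\log^2(X/n)$, filling in details the paper leaves implicit.
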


\end{document}